\theoremstyle{plain}
\numberwithin{equation}{section}
\newtheorem{theorem}{Theorem}[section]
\newtheorem{lemma}[theorem]{Lemma}
\newtheorem{corollary}[theorem]{Corollary}
\newtheorem{proposition}[theorem]{Proposition}
\theoremstyle{remark}
\newtheorem{claim}{\bf Claim}
\DeclareMathOperator{\B}{\mathbb{B}}
\DeclareMathOperator{\G}{\mathbb{G}}
\DeclareMathOperator{\Rset}{\mathbf{R}}
\DeclareMathOperator{\Hcirc}{\mathop {\textit H}\limits^ \circ\mkern-1.5mu {}^1}
\def\util{{\widetilde u}}
\def\htil{{\widetilde h}}
\def\R{\mathbf R}
\def\NN{{\mathsf N}}
\def\DD{{\mathsf D}}
\def\al{\alpha}
\def\Om{\Omega}
\def\be{\beta}
\def\de{\delta}
\def\lam{\lambda}
\def\vphi{\varphi}
\def\ep{\epsilon}
\def\na{\nabla}
\def\pa{\partial}
\def\lt{\left}
\def\rt{\right}
\def\o{\overline}
\numberwithin{equation}{section}
\title[Moser--Trudinger inequality with mean value zero in $\R^2$]{An improved Moser--Trudinger inequality involving the first non-zero Neumann eigenvalue with mean value zero in $\R^2$}
\def\cfac#1{\ifmmode\setbox7\hbox{$\accent"5E#1$}\else\setbox7\hbox{\accent"5E#1}\penalty 10000\relax\fi\raise 1\ht7\hbox{\lower1.05ex\hbox to 1\wd7{\hss\accent"13\hss}}\penalty 10000\hskip-1\wd7\penalty 10000\box7 }
\author[Q.A. Ng\^o]{Qu\cfac oc Anh Ng\^o}
\address[Q.A. Ng\^o]{Department of Mathematics\\
College of Science, Vi\^{e}t Nam National University\\
H\`{a} N\^{o}i, Vi\^{e}t Nam.}
\email{\href{mailto: Q.A. Ng\^o <nqanh@vnu.edu.vn>}{nqanh@vnu.edu.vn}}
\email{\href{mailto: Q.A. Ng\^o <bookworm\_vn@yahoo.com>}{bookworm\_vn@yahoo.com}}
\author[V.H. Nguyen]{Van Hoang Nguyen}
\address[V.H. Nguyen]{Institut de Math\'ematiques de Toulouse\\
Universit\'e Paul Sabatier\\
31062 Toulouse c\'edex 09, France.}
\email{\href{mailto: V.H. Nguyen <van-hoang.nguyen@math.univ-toulouse.fr>}{van-hoang.nguyen@math.univ-toulouse.fr}}
\begin{document}

\allowdisplaybreaks

\begin{abstract}
Let $\Omega$ be a smooth bounded domain in $\Rset^2$ and $\lambda^\NN (\Omega)$ the first non-zero Neumann eigenvalue of the operator $-\Delta$ on $\Omega$. In this paper, for any $\gamma \in [0, \lambda^\NN (\Omega) )$, we establish the following improved Moser--Trudinger inequality
\[
\sup_{u} \int_{\Omega} e^{2\pi u^2} dx < +\infty
\]
for arbitrary functions $u$ in $H^1(\Omega)$ satisfying $\int_\Omega u dx =0$ and $\|\nabla u\|_2^2 -\alpha \|u\|_2^2 \leqslant 1$. Furthermore, this supremum is attained by some function $u^*\in H^1(\Omega)$. This strengthens the results of Chang and Yang (\textit{J. Differential Geom.} {\bf 27} (1988) 259--296) and of Lu and Yang (\textit{Nonlinear Anal.} {\bf 70} (2009) 2992--3001).
\end{abstract}

\date{\bf \today \; at \, \currenttime}

\subjclass[2010]{46E35, 26D10}

\keywords{Moser--Trudinger inequality, blow-up analysis, sharp constant, extremal functions, regularity theory}

\maketitle

\section{Introduction}

On a smooth bounded domain $\Omega$ in $\R^n$ with $n \geqslant 2$, the classical Sobolev inequality tells us that there is a continuous embedding $W^{k,p} (\Omega) \hookrightarrow L^q(\Omega)$ for all $1 \leqslant q \leqslant np/(n-kp)$ provided $p <n/k$. Here $W^{k,p}(\Omega)$ is the usual Sobolev space constructed as the completion of $C_0^\infty(\Om)$ under a suitable norm. However, in the borderline case $p=n/k$, the continuous embedding $W^{k,p} (\Omega) \hookrightarrow L^\infty (\Omega)$ is no longer available by some easy examples. In this case, the so-called Moser--Trudinger inequality is a perfect replacement. 

This inequality, in the form due to Trudinger \cite{T1967}, asserts that
\begin{equation}\label{eq:T}
\sup_{u\in W^{1,n}(\Om), \|\na u\|_n \leqslant 1} \int_\Om \exp \big( \gamma |u|^{\frac n{n-1}} \big) dx < +\infty
\end{equation}
for some non-negative constant $\gamma$. In \eqref{eq:T} we denote by $\| \cdot \|_p$ the usual $L^p$-norm. The mathematical meaning of \eqref{eq:T} is that the Sobolev space $W^{1,n}(\Omega)$ can be continuously embedded into the Orlicz space associated with the Young function $\exp (t^{n/(n-1)})-1$. As remarked in \cite{Cianchi}, such an embedding was announced, without proof, by Yudovi\v c \cite{Y1961} and independently was proved, in a slightly weaker form, by Poho\v zaev \cite{P1965, P1965e}. After the seminal work \cite{T1967}, a lot of generalizations and improvements of \eqref{eq:T}, including the exhibition of the largest constant $\gamma$ in which the inequality \eqref{eq:T} still holds, have been made.

In fact, one cannot expect that there is no upper bound for $\gamma$ in which \eqref{eq:T} holds. The problem of specifying such an upper bound, for functions $u$ belonging in the subspace $W_0^{1,n}(\Omega)$ of $W^{1,n}(\Omega)$ was completely solved by Moser. In \cite{M1970}, it was proved that
\begin{equation}\label{eq:MT}
\sup_{u\in W_0^{1,n}(\Om), \|\na u\|_n \leqslant 1} \int_\Om \exp \big( \gamma |u|^{\frac n{n-1}} \big) dx < +\infty
\end{equation}
for any $\gamma \leqslant \gamma_n := n\omega_{n-1}^{1/(n-1)}$. Here the subspace $W_0^{1,n}(\Om)$ is the completion of $C^\infty(\Omega)$ in $W^{1,n} (\Omega)$ and by $\omega_{n-1}$ we mean the area of the unit $(n-1)$-sphere in $\R^n$. (In the special case $n=2$, we simply denote $W^{1,n}(\Omega)$ by $H^1(\Omega)$ and $W_0^{1,n}(\Omega)$ by $H_0^1(\Omega)$ for simplifying notation.) Here the constant $\gamma_n$ is sharp and by the sharp constant $\gamma_n$ we mean the left hand side of \eqref{eq:MT} becomes infinity if $\gamma > \gamma_n$. A sharp version of \eqref{eq:MT} for higher order derivatives, meaning that the the following inequality
\begin{equation}\label{eq:AMT}
\sup_{u\in W_0^{m,n/m}(\Omega), \|\nabla^m u\|_{n/m} \leqslant 1} \int_\Om \exp \big( \gamma_{n,m} |u|^{\frac n{n-m}} \big) dx < +\infty
\end{equation}
with a sharp constant $\gamma_{n,m}$ with $n>m$, was established by Adams \cite{Adams}.

It is now widely recognized that the Moser--Trudinger inequality \eqref{eq:MT} as well as the Adams inequality \eqref{eq:AMT} and their variants have a strong impact in studying nonlinear partial differential equations. Although the inequality \eqref{eq:MT} has rapidly captured attention and a number of generalizations have been done, however, a prior to a work due to Chang and Yang \cite{CYzero}, all known results involving \eqref{eq:MT} are essentially limited to functions vanishing on $\partial \Omega$.

In \cite{CYzero}, limited to the two-dimensional case, Chang and Yang proved a sharp Moser--Trudinger inequality for functions in $H^1(\Om)$ with mean value zero as follows
\begin{equation}\label{eq:ChangYang}
\sup_{u\in H^1(\Om), \int_\Om u dx =0, \|\na u\|_2 \leqslant 1} \int_\Om \exp (\gamma u^2) dx < +\infty,
\end{equation}
for any $\gamma \leqslant 2\pi$. Moreover, the constant $2\pi$ is sharp in the sense that if $\gamma > 2\pi$, then the supremum in \eqref{eq:ChangYang} is infinity. A generalization of \eqref{eq:ChangYang} to arbitrary dimension was proved by Cianchi \cite{Cianchi} by using an asymptotically sharp relative isoperimetric inequality for domains in $\R^n$.

The motivation of writing this paper traces back to the two works by Lu and Yang in \cite{LYzero} and by Yang in \cite{Yang15}. However, before we mention the main result in \cite{LYzero}, let us first recall an interesting result due to Adimurthi and Druet. In \cite{AD2004}, the authors essentially improve \eqref{eq:MT} with $n$ replaced by $2$ by showing that the inequality
\begin{equation}\label{eq:AD}
\sup_{u\in H_0^1(\Omega), \|\na u\|_2 \leqslant 1} \int_\Om \exp \big(4\pi u^2 ( 1 + \alpha \|u\|_2^2) \big) dx < +\infty
\end{equation}
holds for any $\alpha \in [0, \lambda^\DD (\Omega))$ where $\lambda^\DD (\Omega)$ is the first (non-zero) Dirichlet eigenvalue of the operator $-\Delta$ on $\Omega$. 

In \cite{LYzero}, Lu and Yang essentially sharpened the Chang and Yang inequality \eqref{eq:ChangYang} in the spirit of the Adimurthi and Druet inequality \eqref{eq:AD}. To understand their generalization, let us first denote by $\Hcirc (\Omega)$ a close subspace of $H^1(\Omega)$. given by
\[
\Hcirc (\Omega) = \Big\{ u \in H^1(\Omega) : \int_\Omega udx = 0 \Big\}.
\]
We also denote by
\[
\lambda^\NN(\Omega) = \inf\Big\{\|\na u\|_2^2\,:\, u\in \Hcirc (\Om), \int_\Omega |u|^2 dx =1 \Big\}
\]
the first non-zero Neumann eigenvalue of the operator $-\Delta$ on $\Omega$. We also let $q(t) =1 + a_1t +\cdots + a_k t^k$ be a polynomial of order $k$ with coefficients satisfying
\begin{equation}\label{eq:coeff}
0\leqslant a_1 < \lambda^\NN (\Om),\quad 0\leqslant a_2 \leqslant \lambda^\NN (\Om) a_1, \quad \ldots,\quad a_k \leqslant \lambda^\NN(\Om) a_{k-1}.
\end{equation} 
The main result in \cite{LYzero} is to establish the following inequality
\begin{equation}\label{eq:LuYang}
\sup_{u\in H^1(\Om), \|\na u\|_2 \leqslant 1, \int_\Om u dx =0} \int_\Om \exp \big( 2\pi u^2 q(\|u\|_2^2) \big) dx < +\infty.
\end{equation}
Moreover, if the first coefficient $a_1 \geqslant \lam(\Om)$, then the supremum in \eqref{eq:LuYang} will be infinite for any choice of other coefficients $a_2$,..., $a_k$. Clearly the inequality \eqref{eq:LuYang} is an improvement of the Chang--Yang inequality \eqref{eq:ChangYang} in spirit of Adimurthi and Druet \cite{AD2004} for the Moser--Trudinger inequality \eqref{eq:MT}. Such an improvement was recent proved for Moser--Trudinger inequality in whole space $\R^n$ by do \'O and de Souza \cite{doO2014,doOSouza} and for sharp Adams inequality in dimension four by Lu and Yang \cite{LYadams}. It was also proved in \cite{LYzero} that there exists $0< \varepsilon _0 \leqslant \lambda(\Om)$ such that the supremum in \eqref{eq:LuYang} is attained for any $0 \leqslant a_1 < \varepsilon _0$. In particular, there exists extremal functions for \eqref{eq:ChangYang}. For more about the existence of extremal functions for Moser--Trudinger inequality \eqref{eq:MT} and its generalization, we refer reader to \cite{CC1986,Csato2015,Csato2016,Flucher1992,Li2001,Lin1996,Yang06,Yang07} and references therein.

Let us now discuss Yang's results in \cite{Yang15}. Among other things, for each $\alpha \in [0, \lambda^\DD (\Om) )$ fixed, by introducing an equivalent norm on $H^1(\Omega)$ being orthogonal to constant functions,
\[
\|u\|_{1,\al}^2 = \|\na u\|_2^2 -\alpha \|u\|_2^2,
\]
thanks to the Poincar\'e inequality, the following inequality in spirit of Adimurthi and Druet \cite{AD2004} and Tintarev \cite{Tintarev} 
\begin{equation}\label{eq:YAD}
\sup_{u\in H_0^1(\Omega), \|u\|_{1,\al} \leqslant 1} \int_\Om \exp \big(4\pi u^2 \big) dx < +\infty
\end{equation}
was proved; see \cite[Theorem 1]{Yang15}. Furthermore, the supremum in \eqref{eq:YAD} can be attained by some function.

In this note, we aim to prove another improvement of \eqref{eq:LuYang} in the same fashion of the Yang inequality \eqref{eq:YAD}. Still using the norm $\|\cdot\|_{1,\al}$ on the subspace of $H^1(\Om)$ being orthogonal to constant functions, our first result reads as follows.

\begin{theorem}\label{Main1}
Let $\Om$ be a smooth bounded domain in $\R^2$ and $0 \leqslant \al < \lam^\NN(\Om)$. There holds
\begin{equation}\label{eq:Mainresult1}
\sup_{u\in \Hcirc (\Om), \|u\|_{1,\alpha} \leqslant 1} \int_\Om e^{2\pi u^2} dx < +\infty.
\end{equation}
\end{theorem}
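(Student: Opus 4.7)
The plan is to argue by contradiction using sub-critical approximation followed by blow-up analysis, in the spirit of Yang's proof of the Dirichlet analogue \eqref{eq:YAD}. If the supremum in \eqref{eq:Mainresult1} were infinite, then for each small $\vep > 0$ I consider the sub-critical variational problem
\[
F_\vep := \sup_{u \in \Hcirc(\Om),\, \|u\|_{1,\al} \le 1} \int_\Om e^{(2\pi - \vep) u^2}\,dx.
\]
A bound $F_\vep \le M$ uniform in $\vep$ would contradict the assumption by monotone convergence. For $\vep \ge 2\pi\al/\lam^\NN(\Om)$ the Neumann Poincar\'e inequality forces $\|\na u\|_2^2 \le \lam^\NN(\Om)/(\lam^\NN(\Om) - \al)$ and then Chang--Yang \eqref{eq:ChangYang} applied to a suitable normalization of $u$ yields the bound directly; the delicate range is $0 < \vep < 2\pi\al/\lam^\NN(\Om)$. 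In this range, sub-critical compactness produces an extremal $u_\vep \in \Hcirc(\Om)$ satisfying the Euler--Lagrange equation
\[
-\De u_\vep - \al u_\vep = \tau_\vep u_\vep e^{(2\pi - \vep)u_\vep^2} + \si_\vep \quad \text{in }\Om, \qquad \pa_\nu u_\vep = 0 \text{ on }\pa\Om,
\]
with Lagrange multipliers $\tau_\vep > 0$ (for the norm constraint) and $\si_\vep \in \R$ (for the mean-zero constraint).

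Set $c_\vep := \max_{\overline\Om}|u_\vep|$, attained at some $x_\vep \in \overline\Om$. If $c_\vep$ stays bounded as $\vep \to 0$, standard elliptic regularity upgrades weak convergence to $C^1(\overline\Om)$-convergence and produces a critical extremal, yielding the desired finiteness immediately. The core case is the concentration scenario $c_\vep \to +\infty$. I would then carry out the standard two-dimensional Moser-type blow-up analysis: introduce the Liouville scale $r_\vep$ implicitly by $r_\vep^2 \tau_\vep c_\vep^2 e^{(2\pi - \vep)c_\vep^2} = 1$, set $\psi_\vep(y) := c_\vep\bigl(u_\vep(x_\vep + r_\vep y) - c_\vep\bigr)$, and identify its limit with the explicit two-dimensional Liouville bubble on $\R^2$---or on a half-plane if $x_\vep$ approaches $\pa\Om$, handled by a reflection argument enabled by the Neumann boundary condition.

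The crucial a priori information is the fine decay $\|u_\vep\|_2^2 = O(c_\vep^{-2})$, obtained from a capacity-type estimate on the rescaled profile; this is where $\al < \lam^\NN(\Om)$ enters essentially, via Poincar\'e applied to $u_\vep$ on the complement of the concentration ball. This makes $\de_\vep := \al\|u_\vep\|_2^2$ satisfy $\de_\vep c_\vep^2 = O(1)$, so that, writing $\hat u_\vep := u_\vep/\|\na u_\vep\|_2 \in \Hcirc(\Om)$ (which has unit Dirichlet energy and to which Chang--Yang applies) and $u_\vep^2 = (1+\de_\vep)\hat u_\vep^2$, the correction factor
\[
e^{(2\pi - \vep) \de_\vep \hat u_\vep^2}
\]
is bounded by an absolute constant on the concentration ball $\{|x - x_\vep| \le R r_\vep\}$ (where $\hat u_\vep^2 = O(c_\vep^2)$) and is $1 + o(1)$ on its complement. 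Combining a Carleson--Chang type computation on the rescaled bubble with Chang--Yang \eqref{eq:ChangYang} applied to $\hat u_\vep$ on the complement then gives $F_\vep \leq |\Om| + C$ uniformly in $\vep$, producing the contradiction. The main obstacle is establishing the fine $L^2$-decay $\|u_\vep\|_2^2 = O(c_\vep^{-2})$ and handling the boundary case $x_\vep \in \pa\Om$, where the reflection has to be set up carefully in order to reduce to the half-space Liouville profile.
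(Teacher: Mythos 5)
Your plan follows the same overall blueprint as the paper — subcritical extremals $u_\varepsilon$, Euler--Lagrange equation, boundary concentration handled by reflection, Liouville rescaling limit, and a Green-function-type description of the far field — but closes in a genuinely different way, so let me compare. The paper's finishing step is Lemma \ref{eq:rel}, which shows $\limsup_\varepsilon\int_\Omega e^{\alpha_\varepsilon u_\varepsilon^2}\,dx \le |\Omega|+\limsup_\varepsilon \lambda_\varepsilon/c_\varepsilon^2$, combined with the capacity estimate of Section \S\ref{sec-Capacity}, which bounds $\lambda_\varepsilon/c_\varepsilon^2$ sharply by $\tfrac{\pi}{2}e^{1+2\pi A_p}$. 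You instead use the decay $\|u_\varepsilon\|_2^2 = O(c_\varepsilon^{-2})$ to conclude $\delta_\varepsilon c_\varepsilon^2 = O(1)$ with $\delta_\varepsilon := \alpha\|u_\varepsilon\|_2^2$, then observe that since $\|\nabla u_\varepsilon\|_2\ge 1$ one has $\hat u_\varepsilon^2 \le c_\varepsilon^2$ \emph{everywhere}, so the correction factor $e^{(2\pi-\varepsilon)\delta_\varepsilon\hat u_\varepsilon^2}$ is bounded by $e^{2\pi M}$ \emph{globally}; Chang--Yang applied to $\hat u_\varepsilon$ then gives a uniform bound and the contradiction. This is a cleaner way to get mere finiteness, and it is correct; note though that your statement that the correction factor is ``$1+o(1)$ on the complement of the concentration ball'' is not accurate (on the intermediate annulus $Rr_\varepsilon<|x-x_\varepsilon|<\delta$ the function $u_\varepsilon$ is still comparable to $c_\varepsilon$), but it is also unnecessary since the global bound $e^{2\pi M}$ suffices. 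The one place you gloss over is how to get $\|u_\varepsilon\|_2^2 = O(c_\varepsilon^{-2})$: you attribute it to ``a capacity-type estimate on the rescaled profile'', but in the paper this follows from the Green-function limit $c_\varepsilon u_\varepsilon\rightharpoonup\mathbb{G}$ in $W^{1,q}(\Omega)$, $q<2$ (Proposition \ref{Green}), which in turn rests on the truncation Lemma \ref{truncation} and the Liouville profile — i.e.\ on the whole blow-up analysis, not an independent capacity estimate. So your route saves the Section \S\ref{sec-Capacity} computation (which the paper needs anyway for the sharp constant in Theorem \ref{Main2}) but does not bypass the blow-up machinery that underlies Proposition \ref{Green}.
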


Clearly, our inequality \eqref{eq:Mainresult1} in Theorem \ref{Main1} implies the Chang and Yang \eqref{eq:ChangYang} when $\alpha = 0$; hence it is an improvement of \eqref{eq:ChangYang}. We note that generalizations in the fashion of Theorem \ref{Main1} have already existed in the literature. For instance, Yang and Zhu \cite{YZ} proved a similar result for a singular Moser--Trudinger inequality in dimension two and a similar result for the Adams inequality \eqref{eq:AMT} in dimension four was also proved by the second author in \cite{Nguyen17}. 

Next, we would like to compare our inequality \eqref{eq:Mainresult1} and the Lu and Yang inequality \eqref{eq:LuYang}. As shown in \cite[Section \S6]{Nguyen17}, for any choice of $a_1,\ldots, a_k$ satisfying \eqref{eq:coeff}, we always can choose some small number $\alpha \in (0, \lam^\NN (\Omega))$ such that 
$$q(\|u\|_2^2) \leqslant 1/(1 -\alpha \|u\|_2^2)$$ 
for any $u\in H^1(\Om)$ satisfying $\int_\Om u dx =0$ and $\|\na u\|_2 \leqslant 1$. Simply choosing $v = u (1 -\alpha \|u\|_2^2)^{-1/2}$, we deduce that $v \in \Hcirc (\Om)$ and that $\|\na v\|_{1,\alpha} \leqslant 1$. However, $u^2 q(\|u\|_2^2) \leqslant v^2$. Therefore our inequality \eqref{eq:Mainresult1} is indeed stronger than the one of Lu and Yang \eqref{eq:LuYang}. Before going to an other result, let us mention the following corollary.

\begin{corollary}\label{Main3}
Let $\Om$ be a smooth bounded domain in $\R^2$ and $0 \leqslant \al < \lam^\NN(\Om)$. Then there exists some constant $C>0$ such that for all $u \in H^1(\Omega)$, there holds
\begin{equation}\label{eq:Mainresult3}
\log \Big( \int_\Om e^ u dx \Big) \leqslant \frac 1{8\pi} \int_\Omega |\nabla u|^2 dx - \frac \alpha {8\pi} \int_\Omega u^2 dx + \frac 1{|\Omega|} \int_\Omega u dx + C.
\end{equation}
\end{corollary}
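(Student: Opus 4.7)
The plan is to derive Corollary \ref{Main3} directly from Theorem \ref{Main1} by the classical decomposition-and-normalization trick for deducing Onofri-type inequalities from sharp Moser--Trudinger estimates. First, I would isolate the mean. Set $\bar u := \frac{1}{|\Omega|}\int_\Omega u\,dx$ and $v := u - \bar u \in \Hcirc(\Omega)$; then $\int_\Omega e^u\,dx = e^{\bar u}\int_\Omega e^v\,dx$, so
\[
\log \int_\Omega e^u\,dx = \frac{1}{|\Omega|}\int_\Omega u\,dx + \log \int_\Omega e^v\,dx.
\]
This produces the linear-in-$u$ term on the right of \eqref{eq:Mainresult3} for free and reduces everything to controlling $\log \int_\Omega e^v\,dx$ in terms of $\|v\|_{1,\alpha}^2$.

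Next, I would normalize $v$. Because $0\leqslant \alpha<\lambda^\NN(\Omega)$ and the Poincar\'e-type inequality $\lambda^\NN(\Omega)\|v\|_2^2 \leqslant \|\nabla v\|_2^2$ holds on $\Hcirc(\Omega)$ by the variational characterization of $\lambda^\NN(\Omega)$, the bilinear form $\|\cdot\|_{1,\alpha}^2$ is a genuine norm on $\Hcirc(\Omega)$ equivalent to the Dirichlet seminorm. The case $v\equiv 0$ is trivial, so set $w:=v/\|v\|_{1,\alpha}\in \Hcirc(\Omega)$, giving $\|w\|_{1,\alpha}=1$. The pointwise Young inequality $st\leqslant \frac{s^2}{8\pi}+2\pi t^2$, applied with $s=\|v\|_{1,\alpha}$ and $t=w(x)$, yields
\[
v(x) \leqslant \frac{1}{8\pi}\|v\|_{1,\alpha}^2 + 2\pi\, w(x)^2.
\]
Exponentiating, integrating over $\Omega$, and using Theorem \ref{Main1} to bound $\int_\Omega e^{2\pi w^2}\,dx$ by a constant $C_0 = C_0(\Omega,\alpha)$ gives
\[
\log \int_\Omega e^v\,dx \leqslant \frac{1}{8\pi}\|v\|_{1,\alpha}^2 + \log C_0.
\]

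Finally, I would rewrite everything in terms of $u$. Since $\nabla v = \nabla u$ and $\|v\|_2^2 = \|u\|_2^2 - |\Omega|\bar u^2$, one has $\|v\|_{1,\alpha}^2 = \|\nabla u\|_2^2 - \alpha\|u\|_2^2 + \alpha|\Omega|\bar u^2$; substituting this into the previous display and combining with the mean-isolation identity yields \eqref{eq:Mainresult3} with $C = \log C_0$. There is no genuine analytic obstacle here---Theorem \ref{Main1} carries the entire load, and the remainder is a single application of Young's inequality combined with bookkeeping of the mean.
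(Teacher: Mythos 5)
Your route coincides with the paper's: isolate the mean (write $u=\bar u+v$ with $v\in\Hcirc(\Omega)$), normalize $v$ to $w$ with $\|w\|_{1,\alpha}=1$, complete the square pointwise (your Young inequality $st\leqslant \frac{s^2}{8\pi}+2\pi t^2$ is the same algebra the paper packages as $(\sqrt{2\pi}\,v-\tfrac{\|v\|_{1,\alpha}}{2\sqrt{2\pi}})^2\geqslant 0$), and invoke Theorem~\ref{Main1}. There is no genuine difference of method.

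That said, your final step does not quite close. Your own identity $\|v\|_{1,\alpha}^2=\|\nabla u\|_2^2-\alpha\|u\|_2^2+\alpha|\Omega|\bar u^2$, substituted into $\log\int_\Omega e^v\,dx\leqslant\frac{1}{8\pi}\|v\|_{1,\alpha}^2+\log C_0$ and combined with $\log\int_\Omega e^u\,dx=\bar u+\log\int_\Omega e^v\,dx$, actually yields
\[
\log\int_\Omega e^u\,dx \leqslant \frac{1}{8\pi}\int_\Omega|\nabla u|^2\,dx-\frac{\alpha}{8\pi}\int_\Omega u^2\,dx+\frac{\alpha}{8\pi|\Omega|}\Big(\int_\Omega u\,dx\Big)^2+\frac{1}{|\Omega|}\int_\Omega u\,dx+\log C_0,
\]
which carries the extra nonnegative term $\frac{\alpha}{8\pi|\Omega|}(\int_\Omega u\,dx)^2$ that is absent from \eqref{eq:Mainresult3} and cannot simply be dropped. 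Indeed, for $\alpha>0$ the inequality \eqref{eq:Mainresult3} as stated is false: taking $u\equiv c$ constant, the left side is $c+\log|\Omega|$ while the right side is $-\frac{\alpha|\Omega|}{8\pi}c^2+c+C\to-\infty$ as $c\to\infty$. The paper's own proof makes the identical slip in its last line, passing from the bound on $\int_\Omega e^{u-\bar u}\,dx$ to \eqref{eq:Mainresult3} without accounting for this term, so this is less a flaw in your reasoning than one inherited from the stated target. The correct conclusion is the displayed inequality with the quadratic term (equivalently, with $\alpha\|u-\bar u\|_2^2$ in place of $\alpha\|u\|_2^2$), or the restriction to $u\in\Hcirc(\Omega)$, which is all that is needed in Theorem~\ref{thmApplication}.
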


Next we discuss our second result concerning to extremal functions for \eqref{eq:Mainresult1}. We shall prove the following.

\begin{theorem}\label{Main2}
Let $\Om$ be a smooth bounded domain in $\R^2$ and let $\alpha \in [0, \lambda^\NN (\Omega) )$. Then there exists a function $u^* \in H^1(\Om)$ satisfying $\int_\Om u^* dx =0$ and $\|u^*\|_{1,\al} =1$ such that
\[
\int_\Om e^{2\pi {u^*}^2} dx = \sup_{u\in \Hcirc (\Om), \|u\|_{1,\alpha} \leqslant 1} \int_\Om e^{2\pi u^2} dx < +\infty;
\]
that is, the supremum in \eqref{eq:Mainresult1} is attained by $u^*$.
\end{theorem}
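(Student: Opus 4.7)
The plan is to follow the classical blow-up strategy as used by Carleson--Chang, Flucher, Lin, Li and Yang, adapted to the Neumann/mean-zero setting and to the twisted norm $\|\cdot\|_{1,\alpha}$. First, I would replace the critical exponent $2\pi$ by a subcritical one and show that for every $\varepsilon\in(0,2\pi)$ the problem
\[
S_{\varepsilon}=\sup_{u\in \Hcirc(\Om),\ \|u\|_{1,\alpha}\leqslant 1}\int_\Om e^{(2\pi-\varepsilon)u^2}\,dx
\]
admits a maximizer $u_\varepsilon$. This is routine: any maximizing sequence is bounded in $H^1(\Om)$, so by Rellich--Kondrachov it converges weakly in $H^1$ and strongly in every $L^p$; subcriticality of $2\pi-\varepsilon$ and Vitali's convergence theorem force convergence of the exponential integral, while $\alpha<\lambda^\NN(\Om)$ ensures that $\|\cdot\|_{1,\alpha}$ is lower semicontinuous and equivalent to $\|\nabla\cdot\|_2$ on $\Hcirc(\Om)$. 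Writing the Euler--Lagrange equation, $u_\varepsilon$ satisfies (in the weak sense, with Neumann boundary condition)
\[
-\Delta u_\varepsilon-\alpha u_\varepsilon=\frac{u_\varepsilon e^{(2\pi-\varepsilon)u_\varepsilon^2}}{\lambda_\varepsilon}-\mu_\varepsilon,\qquad \int_\Om u_\varepsilon\,dx=0,
\]
with Lagrange multipliers $\lambda_\varepsilon,\mu_\varepsilon$ that can be identified by testing against $u_\varepsilon$ and against the constant function~$1$.

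Next I would analyse the limit $\varepsilon\to 0^+$. Set $c_\varepsilon=\max_{\overline\Om}|u_\varepsilon|$. If $c_\varepsilon$ stays bounded, elliptic regularity applied to the above Euler--Lagrange equation yields, up to a subsequence, $C^1$--convergence $u_\varepsilon\to u^*$ with $u^*\in\Hcirc(\Om)$, $\|u^*\|_{1,\alpha}=1$, and $\int_\Om e^{2\pi(u^*)^2}dx=\lim S_\varepsilon$, so $u^*$ is the extremal and we are done by Theorem~\ref{Main1}. The serious case is therefore $c_\varepsilon\to\infty$. Let $x_\varepsilon\in\overline\Om$ achieve the maximum of $|u_\varepsilon|$. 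Passing to a subsequence, $x_\varepsilon\to x_0\in\overline\Om$. Using \eqref{eq:Mainresult1} one first shows $u_\varepsilon\rightharpoonup 0$ in $H^1$ and $|\nabla u_\varepsilon|^2 dx \rightharpoonup \delta_{x_0}$ in the sense of measures, i.e.\ concentration of the Dirichlet energy at a single point. Rescaling
\[
\util_\varepsilon(y)=c_\varepsilon\bigl(u_\varepsilon(x_\varepsilon+r_\varepsilon y)-c_\varepsilon\bigr),\qquad r_\varepsilon^2=\frac{\lambda_\varepsilon}{c_\varepsilon^{2}e^{(2\pi-\varepsilon)c_\varepsilon^2}},
\]
I would prove $\util_\varepsilon\to\util$ in $C^1_{\mathrm{loc}}$ to the standard bubble $\util(y)=-\frac{1}{4\pi}\log(1+\pi|y|^2)$ on $\Rset^2$ if $x_0$ is an interior point (or on a half-space if $x_0\in\partial\Om$, where the Neumann condition allows reflection and yields the same limit on $\Rset^2$). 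A capacity estimate of Carleson--Chang type on small disks around $x_\varepsilon$, combined with the off-concentration representation of $u_\varepsilon/c_\varepsilon$ in terms of the Neumann Green's function $G_\alpha$ of $-\Delta-\alpha$ restricted to $\Hcirc(\Om)$, then yields the sharp upper bound
\[
\limsup_{\varepsilon\to 0}\int_\Om e^{(2\pi-\varepsilon)u_\varepsilon^2}\,dx\leqslant |\Om|+\pi\, e^{1+4\pi A_\alpha(x_0)},
\]
where $A_\alpha(x_0)$ denotes the regular part of $G_\alpha$ at $x_0$.

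Finally, to rule out this blow-up scenario I would construct a test function family $\{\phi_\varepsilon\}\subset\Hcirc(\Om)$ with $\|\phi_\varepsilon\|_{1,\alpha}\leqslant 1$ violating the previous bound. The natural Ansatz glues a Moser bubble of height $\sim\sqrt{-\log\varepsilon/2\pi}$ on a disk of radius $\varepsilon$ around a maximum point $x_0^\star$ of $A_\alpha$ with a multiple of $G_\alpha(\cdot,x_0^\star)$ outside that disk, then subtracts the mean to enforce $\int_\Om\phi_\varepsilon\,dx=0$ and rescales so that $\|\phi_\varepsilon\|_{1,\alpha}=1$. A careful expansion (analogous to Yang~\cite{Yang15} for the Dirichlet case, but now using $G_\alpha$ and the mean-zero correction, which only contributes lower order terms since it is $O(1)$ with small $L^\infty$ norm) gives
\[
\int_\Om e^{2\pi\phi_\varepsilon^2}\,dx\geqslant |\Om|+\pi\, e^{1+4\pi A_\alpha(x_0^\star)}+o(1),
\]
contradicting the upper bound above. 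Hence $c_\varepsilon$ must remain bounded and the extremal $u^*$ exists. The main obstacle I anticipate is precisely this last step: controlling how the $\alpha$-twist in the norm and the mean-zero projection perturb the Moser test functions without degrading the sharp $e^{1+4\pi A_\alpha(x_0)}$ term, and in particular verifying that replacing the Dirichlet Green's function by the Neumann Green's function associated with $-\Delta-\alpha$ on $\Hcirc(\Om)$ still produces the correct leading-order gain.
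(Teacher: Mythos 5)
Your proposal follows the same blow-up strategy as the paper: subcritical maximizers, concentration analysis, capacity upper bound, and a test-function lower bound that produces the contradiction. However, there is a concrete gap and a related error in your constants. In the mean-zero setting the blow-up point $p$ is necessarily a \emph{boundary} point: if $p$ were interior one could cut off $u_\varepsilon$ near $p$ without the mean-zero constraint and the classical Moser--Trudinger inequality with threshold $4\pi > 2\pi - \varepsilon$ would force $e^{\alpha_\varepsilon u_\varepsilon^2}$ to stay bounded in some $L^q$, $q>1$, contradicting $c_\varepsilon \to \infty$. The paper proves this as its first claim and the whole rescaling analysis is then carried out via an even reflection across $\partial\Omega$. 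Your proposal treats ``interior or boundary'' symmetrically and says the two cases ``yield the same limit,'' which is not right: interior concentration is simply impossible here.

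This shows up in your constants. After reflection, the rescaled profile solves $-\Delta\varphi = e^{4\pi\varphi}$ on $\Rset^2$ with the normalization $\int_{\Rset^2}e^{4\pi\varphi}\,dx = 2$ (not $1$), giving $\varphi(x) = -\tfrac{1}{2\pi}\log\bigl(1 + \tfrac{\pi}{2}|x|^2\bigr)$; the bubble you write, $-\tfrac{1}{4\pi}\log(1+\pi|y|^2)$, is not even integrable after exponentiation and corresponds to neither case. Similarly, because $p\in\partial\Omega$, the Green function has singularity $-\tfrac{1}{\pi}\log|x-p|$ (doubled relative to the interior Neumann Green function) and the sharp threshold is $|\Omega| + \tfrac{\pi}{2}e^{1+2\pi A_p}$, not $|\Omega| + \pi e^{1+4\pi A_\alpha(x_0)}$. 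Once the boundary concentration is established and the constants corrected accordingly, your outline (the capacity estimate à la Li, the gluing of a Moser bubble with a multiple of the Green function, subtracting the mean, renormalizing in $\|\cdot\|_{1,\alpha}$, and checking that the $\alpha$-twist and mean-zero correction only contribute lower-order terms) does match the paper's argument. Also note that there is no need to maximize $A_\alpha$ over boundary points: one constructs the test family centered at the specific blow-up point $p$ delivered by the concentration analysis, and the lower bound at that same $p$ already contradicts the upper bound.
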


The proof of Theorems \ref{Main1} and \ref{Main2} is based on blow-up analysis. For interested readers on this method, we refer to the book \cite{Druet}; see also \cite{AD2004, Li2001, Lin1996, Nguyen17, Yang06, Yang07, YZ} for more detail on this technique. It is important to note that, unlike the case treated in \cite{Nguyen17}, in our situation, the blow-up behavior can occur on the boundary $\pa \Om$ as in \cite{LYzero}, which makes the analysis more difficult and rather involved.

The organization of this paper is as follows. In the next section \S\ref{sec-ExtremalFun} we prove a subcritical version of \eqref{eq:Mainresult1} as well as the existence of extremal functions for this subcritical inequality. Then in order to prove the critical inequality, we analyze the asymptotic behavior of the sequence of extremal functions for the subcritical inequality in section \S\ref{sec-Asymptotic} and establish some capacity estimates in section \S\ref{sec-Capacity}, which eventually lead us to the proof of Theorems \ref{Main1} and \ref{Main2} in section \S\ref{sec-Proofs}. Finally, we prove Corollary \ref{Main3} in subsection \S\ref{subsec-Application} and provide an application of Corollary \ref{Main3} in section \S\ref{subsec-Application}; see Theorem \ref{thmApplication}.


\section{Extremal functions for the subcritical case}
\label{sec-ExtremalFun}

In this section, we study a subcritical Moser--Trudinger inequality for functions with mean value zero in $H^1(\Om)$. For each $0< \varepsilon < 2\pi$, we denote
\[
C_\varepsilon = \sup_{u\in \Hcirc (\Om), \|u\|_{1,\alpha} \leqslant 1} \int_\Om \exp ((2\pi-\varepsilon ) u^2 ) dx.
\]
Our main result in this section is the following.

\begin{proposition}\label{subcritical}
Let $\Om$ be a smooth bounded domain in $\R^2$ and let $0\leqslant \al < \lambda^\NN (\Omega) $. Then for any $0< \varepsilon < 2\pi$, we have that $C_\varepsilon < +\infty$ and that there exists $u_\varepsilon \in \Hcirc(\Om) \cap C^\infty(\overline \Om)$ such that $\|u_\varepsilon \|_{1,\al}=1$ and 
\begin{equation}\label{eq:sub}
C_\varepsilon = \int_\Om e^{(2\pi-\varepsilon ) u_\varepsilon ^2} dx.
\end{equation}
The Euler--Lagrange equation of $u_\varepsilon $ is given by
\begin{equation}\label{eq:EL}
\left\{
\begin{split}
-\Delta u_\varepsilon &= \lam_\varepsilon ^{-1} e^{\al_\varepsilon u_\varepsilon ^2} u_\varepsilon + \al u_\varepsilon -\lam_\varepsilon ^{-1} \mu_\varepsilon \qquad \mbox{ in } \Om,\\
\frac{\pa u_\varepsilon }{\pa \nu} &= 0 \qquad \mbox{ on } \pa \Om,\\
\|\na u_\varepsilon \|_{1,\al} &=1, \\
\al_\varepsilon &=2\pi -\varepsilon ,\\
\mu_\varepsilon &= \frac{1}{|\Om|}\int_\Om e^{\al_\varepsilon u_\varepsilon ^2} u_\varepsilon dx,\\
 \lam_\varepsilon &= \int_\Om e^{\al_\varepsilon u_\varepsilon ^2} u_\varepsilon ^2 dx.
\end{split}
\right.
\end{equation}
Furthermore, there holds
\begin{equation}\label{eq:liminf}
\liminf_{\varepsilon \to 0} \lam_\varepsilon >0.
\end{equation}
\end{proposition}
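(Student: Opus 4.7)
The plan is to obtain Proposition~\ref{subcritical} by the direct method of the calculus of variations, in the spirit of \cite{Yang06, Yang15, LYzero, Nguyen17}, with the Chang--Yang inequality \eqref{eq:ChangYang} and the Poincar\'e inequality for mean-zero functions providing the key analytic ingredients. The main obstacle is already in the first claim: establishing $C_\varepsilon < +\infty$ for every $\varepsilon \in (0, 2\pi)$. Since $\alpha < \lambda^\NN(\Omega)$, Poincar\'e tells us that $\|\cdot\|_{1,\alpha}$ is equivalent to $\|\nabla\cdot\|_2$ on $\Hcirc(\Omega)$; explicitly, any admissible $u$ satisfies $\|\nabla u\|_2^2 \leqslant \beta := \lambda^\NN(\Omega)/(\lambda^\NN(\Omega)-\alpha)$ and $\|u\|_2^2 \leqslant 1/(\lambda^\NN(\Omega)-\alpha)$. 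The naive rescaling $v = u/\sqrt\beta$ combined with \eqref{eq:ChangYang} only delivers $\int_\Omega e^{(2\pi/\beta) u^2}\,dx \leqslant C$, which covers the regime $\varepsilon \geqslant 2\pi\alpha/\lambda^\NN(\Omega)$. To close the gap, I would invoke the Lu--Yang inequality \eqref{eq:LuYang} with polynomial $q(t) = 1 + a_1 t$, $a_1$ chosen close to $\lambda^\NN(\Omega)$, applied to $w = u/\|\nabla u\|_2$: an algebraic comparison of exponents (using the normalization $\|u\|_{1,\alpha}^2 = 1$ together with the Poincar\'e bound on $\|u\|_2^2$) shows that $2\pi w^2(1 + a_1\|w\|_2^2) \geqslant (2\pi - \varepsilon) u^2$ for suitable $a_1$, yielding the uniform bound.

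To extract the extremal, I take a maximizing sequence $(u_n) \subset \Hcirc(\Omega)$ with $\|u_n\|_{1,\alpha} = 1$ (no loss, since scaling by $1/\|u\|_{1,\alpha}$ only enlarges the integrand pointwise). The bound $\|\nabla u_n\|_2 \leqslant \sqrt{\beta}$ together with Rellich--Kondrachov gives, along a subsequence, $u_n \rightharpoonup u_\varepsilon$ in $H^1(\Omega)$, strongly in every $L^p(\Omega)$, and a.e.; weak lower semicontinuity plus strong $L^2$-convergence force $\|u_\varepsilon\|_{1,\alpha} \leqslant 1$ and $\int_\Omega u_\varepsilon\,dx = 0$. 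Since $\varepsilon > 0$, I pick $p > 1$ close to $1$ so that $p(2\pi - \varepsilon) < 2\pi$; the finiteness from the previous paragraph, applied with the exponent $p(2\pi - \varepsilon)$, bounds $\{e^{(2\pi - \varepsilon) u_n^2}\}$ in $L^p(\Omega)$, hence Vitali's theorem gives $\int_\Omega e^{(2\pi - \varepsilon) u_n^2}\,dx \to \int_\Omega e^{(2\pi - \varepsilon) u_\varepsilon^2}\,dx = C_\varepsilon$, and a scaling argument rules out $\|u_\varepsilon\|_{1,\alpha} < 1$.

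The Euler--Lagrange equation \eqref{eq:EL} then drops out of a standard Lagrange multiplier computation for $J(u) = \int_\Omega e^{\alpha_\varepsilon u^2}\,dx$ under the two constraints $\|u\|_{1,\alpha}^2 = 1$ and $\int_\Omega u\,dx = 0$: testing the resulting identity against $u_\varepsilon$ itself identifies the first multiplier as $\lambda_\varepsilon^{-1}$, while testing against the constant $1$ identifies the second as $\lambda_\varepsilon^{-1}\mu_\varepsilon$; the Neumann boundary condition falls out of the boundary term. Smoothness $u_\varepsilon \in C^\infty(\overline\Omega)$ follows by an elliptic bootstrap: the subcritical Moser--Trudinger estimate places $e^{\alpha_\varepsilon u_\varepsilon^2} \in L^q(\Omega)$ for some $q > 1$, so the right-hand side of \eqref{eq:EL} sits in $L^q$, whence $u_\varepsilon \in W^{2,q}(\Omega) \hookrightarrow C^{0,\gamma}(\overline\Omega)$; iterating with the smoothness of the exponential and the Neumann boundary data yields $C^\infty$ regularity up to $\partial\Omega$.

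Finally, for the strict positivity \eqref{eq:liminf}, the elementary pointwise inequality $e^t - 1 \leqslant t\, e^t$ for $t \geqslant 0$ applied with $t = \alpha_\varepsilon u_\varepsilon^2$ gives
\[
C_\varepsilon - |\Omega| = \int_\Omega \bigl(e^{\alpha_\varepsilon u_\varepsilon^2} - 1\bigr)\,dx \leqslant \alpha_\varepsilon \int_\Omega u_\varepsilon^2 e^{\alpha_\varepsilon u_\varepsilon^2}\,dx = \alpha_\varepsilon \lambda_\varepsilon \leqslant 2\pi \lambda_\varepsilon,
\]
so $\lambda_\varepsilon \geqslant (C_\varepsilon - |\Omega|)/(2\pi)$. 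A uniform positive lower bound on the right-hand side as $\varepsilon \to 0$ is produced by testing $C_\varepsilon$ against any fixed nonzero $\varphi \in \Hcirc(\Omega) \cap C^\infty(\overline\Omega)$ with $\|\varphi\|_{1,\alpha} = 1$: dominated convergence (legitimate since $\varphi$ is bounded) gives $\liminf_{\varepsilon \to 0} C_\varepsilon \geqslant \int_\Omega e^{2\pi\varphi^2}\,dx$, and the latter strictly exceeds $|\Omega|$ because $\varphi \not\equiv 0$.
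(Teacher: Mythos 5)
There is a genuine gap in the opening step --- proving $C_\varepsilon < +\infty$ for every $\varepsilon \in (0, 2\pi)$. Your reduction to the Lu--Yang inequality with $q(t) = 1 + a_1 t$ requires the pointwise bound $2\pi w^2(1 + a_1\|w\|_2^2) \geqslant (2\pi-\varepsilon)u^2$, $w = u/\|\nabla u\|_2$, for every admissible $u$ with $\|u\|_{1,\alpha}^2 = 1$. Setting $t = \|u\|_2^2$, so that $\|\nabla u\|_2^2 = 1 + \alpha t$ and $\|w\|_2^2 = t/(1+\alpha t)$, this is equivalent to
\[
\frac{1 + (\alpha + a_1)t}{(1+\alpha t)^2} \geqslant 1 - \frac{\varepsilon}{2\pi}
\quad\text{for all } 0 \leqslant t < \frac{1}{\lambda^\NN(\Omega) - \alpha}.
\]
As $t \nearrow 1/(\lambda^\NN(\Omega)-\alpha)$ the left side tends to $(\lambda^\NN(\Omega) + a_1)(\lambda^\NN(\Omega) - \alpha)/\lambda^\NN(\Omega)^2$, which (since $a_1 < \lambda^\NN(\Omega)$ by \eqref{eq:coeff}) is strictly less than $2(\lambda^\NN(\Omega) - \alpha)/\lambda^\NN(\Omega)$. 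For $\alpha > \lambda^\NN(\Omega)/2$ this quantity is $<1$, so for small $\varepsilon$ the inequality fails and the Lu--Yang comparison only enlarges the trivial Chang--Yang range of $\varepsilon$; it does not cover all of $(0,2\pi)$ when $\alpha$ is close to $\lambda^\NN(\Omega)$. This is consistent with the paper's own remark that \eqref{eq:Mainresult1} is strictly stronger than \eqref{eq:LuYang}, so you should not expect to deduce the former from the latter.

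What the paper uses instead is a Lions-type concentration--compactness lemma (Lemma \ref{Lions}): for a normalized sequence $u_j \rightharpoonup u_0$ in $\Hcirc(\Omega)$ with $\|u_j\|_{1,\alpha}=1$, one has $\limsup_j \int_\Omega e^{2\pi p u_j^2}\,dx < +\infty$ for every $p < 1/(1-\|u_0\|_{1,\alpha}^2)$, because $\|\nabla(u_j - u_0)\|_2^2 \to 1 - \|u_0\|_{1,\alpha}^2$ and the Chang--Yang inequality can then be combined with an elementary quadratic splitting of $u_j^2$. Taking $p = (2\pi-\varepsilon)/(2\pi) < 1$ already forces $C_\varepsilon < +\infty$ along a maximizing sequence, while --- after ruling out $u_0 \equiv 0$, which would give $C_\varepsilon = |\Omega|$ --- taking some $p>1$ supplies exactly the uniform $L^p$ bound your Vitali step needs. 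With this lemma inserted, the rest of your outline (lower semicontinuity, the scaling step forcing $\|u_\varepsilon\|_{1,\alpha}=1$, the Lagrange-multiplier computation giving \eqref{eq:EL}, the elliptic bootstrap, and the $\liminf$ estimate from $e^t - 1 \leqslant te^t$) matches the paper's proof; your direct test-function argument for $\liminf_{\varepsilon\to 0} C_\varepsilon > |\Omega|$ is a clean, self-contained alternative to the paper's forward reference to Lemma \ref{lemLim=Sup}.
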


In the proof of Proposition \ref{subcritical}, inspired by \cite{Lions1985}, we need the following Lions-type concentration--compactness principle for functions in $H^1(\Om)$ with mean value zero.

\begin{lemma}\label{Lions}
Let $\{u_j\}_j\subset \Hcirc (\Om)$ such that $\|u_j\|_{1,\al} =1$ and $u_j \rightharpoonup u_0$ in $H^1(\Om)$ then for any $0 < p< 1/(1-\|u_0\|_{1,\al}^2)$, there holds
\[
\limsup_{j\to\infty} \int_\Om e^{2\pi p u_j^2}dx < +\infty.
\]
\end{lemma}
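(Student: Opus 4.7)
My plan is to use a Brezis--Lieb-type decomposition around the weak limit $u_0$, followed by a Young/H\"older split that reduces the matter to the Chang--Yang inequality \eqref{eq:ChangYang}.

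First, set $w_j := u_j - u_0$. Then $w_j \rightharpoonup 0$ in $H^1(\Om)$ by hypothesis, Rellich--Kondrachov gives $w_j \to 0$ strongly in $L^2(\Om)$, and since $u_j, u_0 \in \Hcirc(\Om)$ we have $\int_\Om w_j\, dx = 0$. Expanding the norm as $\|u_j\|_{1,\al}^2 = \|u_0\|_{1,\al}^2 + 2\la u_0, w_j\ra_{1,\al} + \|w_j\|_{1,\al}^2$ via the bilinear form associated with $\|\cdot\|_{1,\al}$, the cross term $\int_\Om \na u_0 \cdot \na w_j\, dx - \al \int_\Om u_0 w_j\, dx$ vanishes in the limit (the gradient part by weak $H^1$-convergence, the $L^2$ part by strong $L^2$-convergence), and similarly $\|w_j\|_{1,\al}^2 = \|\na w_j\|_2^2 + o(1)$. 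This yields the Brezis--Lieb-type identity
\[
\|\na w_j\|_2^2 = 1 - \|u_0\|_{1,\al}^2 + o(1) =: \be + o(1), \qquad \be \in [0,1].
\]

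Next, for any $\de > 0$ and $q > 1$ with conjugate exponent $q'$, Young's inequality gives the pointwise bound $u_j^2 \leqslant (1+\de) w_j^2 + (1 + 1/\de) u_0^2$, after which H\"older's inequality produces
\[
\int_\Om e^{2\pi p u_j^2}\, dx \leqslant \Bigl(\int_\Om e^{2\pi p(1+\de) q\, w_j^2}\, dx\Bigr)^{1/q} \Bigl(\int_\Om e^{2\pi p(1+1/\de) q'\, u_0^2}\, dx\Bigr)^{1/q'}.
\]
Because the hypothesis $p\be < 1$ is strict, I can pick $\de > 0$ small and $q > 1$ close to $1$ so that $p(1+\de) q \be < 1$. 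The first factor is then controlled by Chang--Yang \eqref{eq:ChangYang} applied to the rescaled sequence $\widetilde w_j := w_j/\|\na w_j\|_2$, which lies in $\Hcirc(\Om)$ with unit gradient norm: indeed $2\pi p(1+\de) q\, w_j^2 \leqslant 2\pi \widetilde w_j^2$ for $j$ large, so this factor stays uniformly bounded. The second factor is a fixed constant depending only on $u_0$, $\de$, and $q$; for any $u_0 \in H^1(\Om)$ and any $\ga > 0$ one has $\int_\Om e^{\ga u_0^2}\, dx < +\infty$, which I would verify by approximating $u_0$ in $H^1$ by bounded functions and applying Chang--Yang to the small remainder.

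The main technical subtlety is the Brezis--Lieb expansion for the \emph{shifted} norm $\|\cdot\|_{1,\al}$: the $-\al\|\cdot\|_2^2$ correction could in principle spoil the orthogonal splitting of $u_j = u_0 + w_j$, but strong $L^2$-convergence $w_j \to 0$ from Rellich--Kondrachov neutralizes both the cross and self contributions coming from $\al$. Once this identity is in hand, the remainder is a routine Young/H\"older argument in which $\de$ and $q$ are dictated by the strict gap $1 - p\be > 0$.
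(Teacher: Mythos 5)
Your proposal is correct and matches the paper's argument: it computes $\|\nabla(u_j-u_0)\|_2^2 \to 1-\|u_0\|_{1,\alpha}^2$ from weak $H^1$ plus strong $L^2$ convergence, then splits $u_j^2$ via Young's inequality and reduces to Chang--Yang. The paper states this more tersely (citing only ``the elementary inequality $ab\leqslant \gamma a^2 + b^2/(4\gamma)$''), but the H\"older step and the finiteness of $\int_\Omega e^{\gamma u_0^2}\,dx$ that you spell out are exactly the routine details the paper leaves implicit.
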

\begin{proof}
By the Poincar\'e inequality, we have $\|u_j\|_2^2 \leqslant \lambda^\NN (\Omega) ^{-1} \|\na u_j\|_2^2$, hence we get 
\[
\|\na u_j\|_2^2 \leqslant \frac{ \lambda^\NN (\Omega) }{ \lambda^\NN (\Omega) -\al}
\]
for any $j$. Consequently, the sequence $\{u_j\}_j$ is bounded in $H^1(\Om)$. Up to a subsequence, we assume, in addition, that $u_j\to u_0$ in $L^q(\Om)$ for any $1 \leqslant q < +\infty$ and $u_j\to u_0$ a.e. in $\Om$. We have
\[
\|\na u_j -\na u_0\|_2^2 =\|\na u_j\|_2^2 -\|\na u_0\|_2^2 +o(1) = 1-\|u_0\|_{1,\al}^2 + o(1).
\]
Thus, for any $p < 1/(1-\|u_0\|_{1,\al}^2)$, there exists $j_0$ such that $p\|\na (u_j-u_0)\|_2^2 \leqslant (p+1)/2 <1$ for any $j\geqslant j_0$ hence our conclusion is a consequence of the inequality of Chang and Yang \eqref{eq:ChangYang} and the elementary inequality $ab \leqslant \gamma a^2 + b^2/(4\gamma)$ for any $\gamma >0$.
\end{proof}

\begin{proof}[Proof of Proposition \ref{subcritical}]
Let $\{u_j\}_j$ be a maximizing sequence for $C_\varepsilon $. Under the condition $\int_\Om u_j dx =0$ and by using the Poincar\'e inequality as in proof of Lemma \ref{Lions} above, we see that $\{u_j\}_j$ is bounded in $H^1(\Om)$. Thus we can assume, in addition, that $u_j \rightharpoonup u_\varepsilon $ weakly in $H^1(\Om)$, $u_j\to u_\varepsilon $ in $L^q(\Om)$ for any $1 \leqslant q < +\infty$, and $u_j\to u_\varepsilon $ a.e. in $\Om$. If the limit function $u_\varepsilon \equiv 0$, then by Lemma \ref{Lions}, we can choose $1 < p < 2\pi/\al_\varepsilon $ in such a way that $\{\exp (\al_\varepsilon p u_j^2) \}_j$ is bounded in $L^1(\Om)$, which implies that
\[
C_\varepsilon = \lim_{j\to \infty} \int_\Om e^{\al_\varepsilon u_j^2} dx =|\Om|,
\]
which is impossible. Hence $u_\varepsilon \not\equiv 0$. By Lemma \ref{Lions}, we can choose $1< p < 1/(1 -\|u_\varepsilon \|_{1,\al}^2)$ such that $\{ \exp (\al_\varepsilon p u_j^2 ) \}_j$ is bounded in $L^1(\Om)$, hence
\[
C_\varepsilon = \lim_{j\to \infty} \int_\Om e^{\al_\varepsilon u_j^2} dx = \int_\Om e^{\al_\varepsilon u_\varepsilon ^2} dx.
\]
Obviously, we have $\int_\Om u_\varepsilon dx =0$. By the lower semi-continuous, we have $\|u_\varepsilon \|_{1,\al} \leqslant 1$. If $\|u_\varepsilon \|_{1,\al} < 1$, then we easily get a contradiction because
\[
C_\varepsilon 
=\int_\Om \exp \Big(\al_\varepsilon \|u_\varepsilon \|_{1,\al}^2 \frac{u_\varepsilon ^2}{\|u_\varepsilon \|_{1,\al}^2} \Big) dx < \int_\Om \exp \Big(\al_\varepsilon \frac{u_\varepsilon ^2}{\|u_\varepsilon \|_{1,\al}^2} \Big) dx \leqslant C_\varepsilon .
\]
This shows that $\|u_\varepsilon \|_{1,\al} =1$ and hence $u_\varepsilon $ is a maximizer for $C_\varepsilon $. A straightforward computation shows that the Euler--Lagrange equation of $u_\varepsilon $ is given by \eqref{eq:EL}. By standard elliptic theory \cite{GT}, we get from \eqref{eq:EL} that $u_\varepsilon \in C^\infty(\overline \Om)$. To prove \eqref{eq:liminf}, we use the inequality $e^t \leqslant 1 + te^t$ for any $t \geqslant 0$, thus
\[
\al_\varepsilon \lambda_\varepsilon \geqslant \int_\Om \exp (\al_\varepsilon u_\varepsilon ^2 ) dx -|\Om|.
\]
Dividing both sides by $\alpha_\varepsilon$ and sending $\varepsilon$ to zero, we obtain
\[
\liminf_{\varepsilon \to 0} \lam_\varepsilon \geqslant \frac1{2\pi} \Big[ \sup_{u\in \Hcirc (\Om), \|u\|_{1,\alpha} \leqslant 1} \int_\Om e^{2\pi u^2} dx -|\Om| \Big] >0
\]
thanks to Lemma \ref{lemLim=Sup} below. Thus we have \eqref{eq:liminf} as claimed in \eqref{eq:liminf}.
\end{proof}

Note that by the elementary inequality $t e^{t^2} \leqslant e + t^2 e^{t^2}$ for any $t \geqslant 0$, we conclude that $|\mu_\varepsilon | \leqslant e |\Om| + \lam_\varepsilon $. Hence, there is $c>0$ such that
\begin{equation}\label{eq:upbound}
\lam_\varepsilon ^{-1} |\mu_\varepsilon | \leqslant c
\end{equation}
for all $\varepsilon >0$.


\section{Asymptotic behavior of extremals for subcritical functionals}
\label{sec-Asymptotic}

In this section, we study the asymptotic behavior of functions $u_\varepsilon $ given in section \S\ref{sec-ExtremalFun}. Denote $c_\varepsilon = \max_{\o \Om} |u_\varepsilon |$. If $c_\varepsilon $ is bounded, then by applying standard elliptic theory to \eqref{eq:EL}, we see that $u_\varepsilon \to u^*$ in $C^2(\o \Om)$, which implies Theorems \ref{Main1} and \ref{Main2}. Hence, without loss of generality, we assume that
\begin{equation}\label{eq:assumption}
c_\varepsilon = u_\varepsilon (x_\varepsilon ) = \max_{\o \Om} |u_\varepsilon | \to \infty
\end{equation}
for some sequence of point $\{x_\varepsilon\}$ converging to some point $p \in \overline \Omega$. In the sequel, we do not distinguish a sequence and its subsequence. The reader can understand it from the context. 

First, an application of the Poincar\'e inequality implies that $\{u_\varepsilon \}_\varepsilon $ is bounded in $H^1(\Om)$. From this we can deduce that as $\varepsilon \to 0$
\begin{itemize}
 \item $u_\varepsilon \rightharpoonup u_0$ weakly in $H^1(\Om)$, 
 \item $u_\varepsilon \to u_0$ in $L^q(\Om)$ for any $1 \leqslant q < +\infty$ and
 \item $u_\varepsilon \to u_0$ a.e. in $\Om$.
\end{itemize} 
If $u_0\not \equiv0$, then there exist $r>1$ such that $\exp (\al_\varepsilon u_\varepsilon ^2 )$ is bounded in $L^r(\Om)$ provided $\varepsilon >0$ small enough. Applying standard elliptic theory to \eqref{eq:EL}, we get that $c_\varepsilon $ is bounded, which is impossible. Hence $u_0\equiv 0$. 

In the rest of the present section, we examine the blow-up sequence $\{u_\varepsilon\}$ as well as the blow-up ponit $p$. Our first property involves the blow-up point. 

\begin{claim}\label{claim1}
There holds $p\in \pa\Om$. 
\end{claim}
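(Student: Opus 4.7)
The plan is to argue by contradiction: assume $p\in\Om$, so that $x_\varepsilon$ stays at a positive distance from $\pa\Om$ for all small $\varepsilon$. Rescaling around $x_\varepsilon$ then produces a blow-up problem on all of $\R^2$ with no boundary contribution. I would introduce the Moser-type scale
\[
r_\varepsilon^2 = \lam_\varepsilon c_\varepsilon^{-2} e^{-\al_\varepsilon c_\varepsilon^2},
\]
which tends to $0$ because $c_\varepsilon\to\infty$ and $\lam_\varepsilon\leqslant c_\varepsilon^2 C_\varepsilon$, and define the normalized profile and the deviation
\[
\eta_\varepsilon(y)=\frac{u_\varepsilon(x_\varepsilon+r_\varepsilon y)}{c_\varepsilon},\qquad \psi_\varepsilon(y)=2\al_\varepsilon c_\varepsilon\bigl(u_\varepsilon(x_\varepsilon+r_\varepsilon y)-c_\varepsilon\bigr)
\]
on the expanding domain $\Om_\varepsilon=r_\varepsilon^{-1}(\Om-x_\varepsilon)$, which exhausts $\R^2$ as $\varepsilon\to 0$.

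Starting from the Euler--Lagrange equation \eqref{eq:EL} and using the choice of $r_\varepsilon$, a direct chain-rule computation yields
\[
-\De_y\psi_\varepsilon(y)=2\al_\varepsilon\,\eta_\varepsilon(y)\,e^{\al_\varepsilon(u_\varepsilon^2(x_\varepsilon+r_\varepsilon y)-c_\varepsilon^2)}+o(1),
\]
where the $o(1)$ term collects the rescaled lower-order contributions $r_\varepsilon^2\al u_\varepsilon$ and $r_\varepsilon^2\lam_\varepsilon^{-1}\mu_\varepsilon$, both negligible in view of \eqref{eq:upbound} and the fact that $c_\varepsilon r_\varepsilon^2\to 0$. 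The bounds $|\eta_\varepsilon|\leqslant 1$ and $e^{\al_\varepsilon(u_\varepsilon^2-c_\varepsilon^2)}\leqslant 1$ make the right-hand side uniformly bounded on compact sets; combining this with $\psi_\varepsilon\leqslant 0=\psi_\varepsilon(0)$ and a Harnack-type argument applied to the harmonic part of $\psi_\varepsilon$, I extract a subsequence along which $\psi_\varepsilon\to\psi$ in $C^1_{loc}(\R^2)$ and $\eta_\varepsilon\to 1$ locally uniformly. Passing to the limit yields the Liouville problem
\[
-\De\psi=4\pi e^\psi\ \text{ in }\R^2,\qquad \psi\leqslant\psi(0)=0,\qquad \int_{\R^2}e^\psi\,dy<+\infty,
\]
whose solution, by the Chen--Li classification, is $\psi(y)=-2\log\bigl(1+\tfrac{\pi}{2}|y|^2\bigr)$, carrying the crucial mass identity $\int_{\R^2}e^\psi\,dy=2$.

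To close the contradiction I would revisit $\lam_\varepsilon$ after the change of variables $x=x_\varepsilon+r_\varepsilon y$: using $r_\varepsilon^2 c_\varepsilon^2 e^{\al_\varepsilon c_\varepsilon^2}=\lam_\varepsilon$ and the local convergences above, for every fixed $R>0$,
\[
\frac{1}{\lam_\varepsilon}\int_{B(x_\varepsilon,Rr_\varepsilon)} u_\varepsilon^2\, e^{\al_\varepsilon u_\varepsilon^2}\,dx=\int_{B(0,R)}\eta_\varepsilon^2\, e^{\al_\varepsilon(u_\varepsilon^2(x_\varepsilon+r_\varepsilon y)-c_\varepsilon^2)}\,dy\longrightarrow\int_{B(0,R)}e^\psi\,dy,
\]
the limit being justified because the integrand converges uniformly on $B(0,R)$. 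Since the left-hand side is bounded by $1$ for every $\varepsilon$ (as $\int_\Om u_\varepsilon^2 e^{\al_\varepsilon u_\varepsilon^2}\,dx=\lam_\varepsilon$ by definition of $\lam_\varepsilon$), we obtain $\int_{B(0,R)}e^\psi\leqslant 1$ for every $R>0$, whence $\int_{\R^2}e^\psi\leqslant 1$ upon letting $R\to\infty$. This contradicts the value $2$ just computed, so $p\notin\Om$ and $p\in\pa\Om$.

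The most delicate step in the plan is the $C^1_{loc}$ convergence $\psi_\varepsilon\to\psi$: one must rule out the possibility that $\psi_\varepsilon\to-\infty$ on compact sets, which I would handle by decomposing $\psi_\varepsilon$ on a ball $B_R$ into its harmonic extension from $\pa B_R$ (controlled via Harnack for non-positive harmonic functions once $\psi_\varepsilon(0)=0$ and $\psi_\varepsilon\leqslant 0$ are known) plus a bounded inhomogeneous piece; standard elliptic regularity then upgrades the resulting $L^\infty_{loc}$ bound to $C^{1,\beta}_{loc}$.
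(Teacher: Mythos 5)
Your argument is correct in its essentials but takes a very different and considerably heavier route than the paper. The paper rules out interior blow-up with a short, local cutoff argument: pick a ball $B_r(p)\subset\Om$, multiply by $\chi\in C_0^\infty(B_r(p))$ with $\chi\equiv 1$ on $B_{r/2}(p)$, check that $\|\nabla(\chi u_\varepsilon)\|_2^2 < 3/2$ for small $\varepsilon$ (using $\|u_\varepsilon\|_2\to 0$), apply the Dirichlet Moser--Trudinger inequality \eqref{eq:MT} --- whose constant is $4\pi$, hence ample room over $\al_\varepsilon<2\pi$ --- to get $e^{\al_\varepsilon u_\varepsilon^2}$ bounded in $L^q(B_{r/2}(p))$ for some $q>1$, and finish with interior elliptic estimates contradicting $c_\varepsilon\to\infty$. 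The point is precisely that on an interior ball the stronger $H_0^1$ inequality with constant $4\pi$ is available, whereas at a boundary point one only has the $2\pi$ inequality of Chang--Yang; no blow-up analysis is required. Your proposal instead performs the full interior rescaling and invokes the Chen--Li classification, obtaining a Liouville solution of mass exactly $2$, while the absence of reflection/doubling caps the rescaled mass at $1$; the contradiction $2\leqslant 1$ excludes the interior case. This is logically sound and is in fact the interior analogue of the computation the paper carries out later (compare Claim \ref{claim4}), where reflecting across $\pa\Om$ doubles the available mass to $2$ and removes the contradiction. What you gain is a uniform treatment of the interior and boundary cases; what you pay is that the heavy Liouville machinery must already be in place to dispose of what is, in the paper, the very first and cheapest claim. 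One step needs tightening: your justification that $r_\varepsilon\to 0$, via $\lam_\varepsilon\leqslant c_\varepsilon^2 C_\varepsilon$, tacitly assumes $C_\varepsilon$ is bounded as $\varepsilon\to 0$, which is not available at this stage (it is essentially the content of Theorem \ref{Main1}). The bound $r_\varepsilon^2 c_\varepsilon^2 e^{\beta c_\varepsilon^2}\leqslant\int_\Om e^{\beta u_\varepsilon^2}u_\varepsilon^2\,dx\to 0$ for a fixed $\beta<2\pi$, as in the proof of Claim \ref{claim3}, uses only Chang--Yang and $u_\varepsilon\to 0$ in $L^q$ and is independent of whether $p\in\pa\Om$, so it should be used instead.
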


\begin{proof}[Proof of Claim \ref{claim1}]
Indeed, if otherwise, we can take $r>0$ sufficiently small such that $B_r(p) \subset \Om$. Considering the cut-off function $\chi \in C_0^\infty(B_r(p))$ such that $0\leqslant \chi\leqslant 1$ and $\chi =1$ in $B_{r/2}(p)$. Fix a small number $\delta > 0$, we have
\begin{align*}
\int_{\Om} |\na(\chi u_\varepsilon )|^2 dx &= \int_\Om |\chi\na u_\varepsilon + u_\varepsilon \na \phi|^2 dx \\
&\leqslant (1+ \de)\int_\Om |\na u_\varepsilon |^2 dx + \Big(1+\frac1\delta \Big) \int_\Om u_\varepsilon ^2 |\na \phi|^2 dx\\
&\leqslant (1+\de) + \Big[(1+\de)\alpha +C \Big(1+\frac1\delta \Big) \Big]\|u_\varepsilon \|_2^2,
\end{align*}
where $C= \sup |\na \chi|^2$. Thus for $\varepsilon $ small enough, we get that $\|\na (\chi u_\varepsilon )\|_2^2 \leqslant 1+ 2\delta < 3/2$ provided $\delta \in (0,1/4)$. Applying the Moser--Trudinger inequality \eqref{eq:MT}, we see that $\exp (\al_\varepsilon \chi^2u_\varepsilon )$ is bounded in $L^q(\Om)$ for some $q >1$, hence $\exp (\al_\varepsilon u_\varepsilon ^2 )$ is bounded in $L^q(B_{r/2}(p))$ for some $q >1$. Applying standard elliptic theory to \eqref{eq:EL}, we obtain the boundedness of $u_\varepsilon $ in $C^1(B_{r/2}(p))$. In particular, $c_\varepsilon $ is bounded, which contradicts to \eqref{eq:assumption}.
\end{proof}

Keep in mind that $u_\varepsilon \to 0$ strongly in any $L^q(\Omega)$ with $1<q<+\infty$. Next we want to show the following.

\begin{claim}\label{claim2}
As $\varepsilon \to 0$ there holds
\begin{equation}\label{eq:diract}
|\na u_\varepsilon |^2 dx \rightharpoonup \de_p
\end{equation}
in the sense of measure.
\end{claim}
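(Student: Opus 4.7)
The plan is to reduce the statement to a single-point concentration claim for a vague limit of $|\na u_\varepsilon|^2 dx$, and then to rule out non-concentration at $p$ by the cut-off technique already used in Claim \ref{claim1}, suitably adapted to the fact that $p$ lies on $\pa\Om$. First, the constraint $\|u_\varepsilon\|_{1,\al}^2 = 1$ together with $u_\varepsilon \to 0$ in every $L^q(\Om)$, $1 \leqslant q < \infty$, gives $\|\na u_\varepsilon\|_2^2 = 1 + \al \|u_\varepsilon\|_2^2 \to 1$. Consequently the nonnegative Radon measures $\nu_\varepsilon := |\na u_\varepsilon|^2 dx$ on $\overline\Om$ have total mass converging to $1$, so up to a subsequence they converge vaguely to a probability measure $\nu$ on $\overline\Om$. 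Since $\de_p$ is also a probability measure, establishing $\nu = \de_p$ is equivalent to showing $\nu(\{p\}) = 1$.

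I would argue by contradiction: assume $\nu(\{p\}) < 1$. Choose $r > 0$ with $\nu(\pa B_r(p) \cap \overline\Om) = 0$ and $\nu(\overline B_r(p) \cap \overline\Om) < 1-\eta$ for some $\eta > 0$; then $\int_{B_r(p)\cap\Om} |\na u_\varepsilon|^2 dx \leqslant 1-\eta/2$ for all $\varepsilon$ small. Pick $\chi \in C^\infty(\overline\Om)$ with $0 \leqslant \chi \leqslant 1$, $\chi \equiv 1$ on $B_{r/2}(p) \cap \overline\Om$ and $\chi \equiv 0$ outside $B_r(p) \cap \overline\Om$, and repeat the splitting from Claim \ref{claim1} to obtain $\|\na(\chi u_\varepsilon)\|_2^2 \leqslant (1+\de)(1-\eta/2) + C_\de \|u_\varepsilon\|_2^2 \leqslant 1-\eta/4$ after choosing $\de$ small then $\varepsilon$ small. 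Because $p \in \pa\Om$, the function $\chi u_\varepsilon$ is generally not in $H_0^1(\Om)$ and Moser's inequality \eqref{eq:MT} is not directly applicable; instead I pass to the mean-zero correction
\[
v_\varepsilon := \chi u_\varepsilon - \fint_\Om \chi u_\varepsilon\, dx \in \Hcirc(\Om),
\]
whose gradient is the same as that of $\chi u_\varepsilon$, so $\|\na v_\varepsilon\|_2^2 \leqslant 1-\eta/4$. Applying the Chang--Yang inequality \eqref{eq:ChangYang} to $v_\varepsilon/\sqrt{1-\eta/4}$, noting that $\fint_\Om \chi u_\varepsilon dx \to 0$ (because $\|u_\varepsilon\|_1 \to 0$), and using the elementary bound $u_\varepsilon^2 \leqslant (1+\si) v_\varepsilon^2 + C_\si$ valid on $\{\chi = 1\}$ for arbitrary $\si > 0$, one extracts some $s > 1$ such that $\exp(\al_\varepsilon u_\varepsilon^2)$ is bounded in $L^s(B_{r/2}(p)\cap\Om)$ uniformly in $\varepsilon$.

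Feeding this local $L^s$-bound into the Euler--Lagrange equation \eqref{eq:EL} and invoking $\liminf_\varepsilon \lam_\varepsilon > 0$ from \eqref{eq:liminf} together with $\lam_\varepsilon^{-1}|\mu_\varepsilon| \leqslant c$ from \eqref{eq:upbound}, the right-hand side of \eqref{eq:EL} is bounded in $L^s(B_{r/2}(p)\cap\Om)$ uniformly in $\varepsilon$. Standard elliptic regularity up to the boundary for the Neumann problem together with the Sobolev embedding in dimension two then yields a uniform $L^\infty$-bound for $u_\varepsilon$ on $B_{r/4}(p) \cap \overline\Om$, which contradicts $c_\varepsilon = u_\varepsilon(x_\varepsilon) \to \infty$ with $x_\varepsilon \to p$. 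Hence $\nu(\{p\}) = 1$, i.e. $\nu = \de_p$, proving \eqref{eq:diract}. The chief technical obstacle is precisely the boundary location of $p$: the interior cut-off argument in the proof of Claim \ref{claim1} does not transplant verbatim, and its adaptation --- here via the zero-mean correction paired with the Chang--Yang inequality, or alternatively via an even Neumann reflection across $\pa\Om$ exploiting $\pa_\nu u_\varepsilon = 0$ --- is the delicate point in passing from interior to boundary concentration.
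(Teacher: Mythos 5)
Your argument is correct and follows essentially the same route as the paper: negate single-point concentration, form the mean-zero corrected cutoff $\chi u_\varepsilon - \fint_\Om \chi u_\varepsilon\,dx$, apply the Chang--Yang inequality \eqref{eq:ChangYang} to gain a local $L^s$-bound on $\exp(\al_\varepsilon u_\varepsilon^2)$, and conclude via boundary elliptic estimates for the Neumann problem \eqref{eq:EL}. The only differences are cosmetic (you first extract a vague limit measure $\nu$ and phrase the contradiction as $\nu(\{p\})<1$, whereas the paper passes directly to a ball $B_r(p)$ with energy below $\mu<1$), and the two proofs are otherwise identical in substance.
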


\begin{proof}[Proof of Claim \ref{claim2}]
 Indeed, by the definition of $\| \cdot \|_{1, \alpha}$ we observe that $\|\na u_\varepsilon \|_2^2 = 1 + \al \|u_\varepsilon \|_2^2 \to 1$ as $\varepsilon \to 0$. If \eqref{eq:diract} does not hold, then there exist $r>0$ small enough and $\mu < 1$ such that 
\[
\lim_{\varepsilon \to 0} \int_{B_r(p)\cap \Om} |\na u_\varepsilon |^2 dx < \mu.
\]
Still let $\chi$ be a cut-off function as above, define
\[
\chi_\varepsilon =\chi u_\varepsilon -\frac1{|\Om|} \int_\Om \chi u_\varepsilon dx.
\] 
Then $\int_\Om \chi_\varepsilon dx =0$ and by the similar estimate as in the proof of Claim \ref{claim1} we have
\[
\int_\Om |\na \chi_\varepsilon |^2 dx \leqslant (1+ \de)\int_{B_r(p)} |\na u_\varepsilon |^2 dx + \Big( 1 + \frac 1\delta \Big) \int_\Om |u_\varepsilon |^2 |\na \chi|^2 dx,
\]
for any $\delta >0$. Since $u_\varepsilon \to 0$ in $L^2(\Om)$, $|\na \chi|$ is bounded, and $\mu \in (0,1)$, by fixing $\delta >0$ sufficient small, there exists some $\varepsilon _0 >0$ such that
\[
\int_\Om |\na \chi_\varepsilon |^2 dx < \frac{1+\mu}{2} < 1
\]
for all $\varepsilon < \varepsilon _0$. Thanks to $\alpha_\varepsilon = 2\pi - \varepsilon$, we apply the Moser--Trudinger inequality of Chang and Yang \eqref{eq:ChangYang} to obtain the boundedness of $\exp (\al_\varepsilon \chi_\varepsilon ^2 )$ in $L^s(B_{r/2}(p)\cap \Om)$ for some $s>1$. Note that
\[
\chi^2 u_\varepsilon ^2 \leqslant (1+t)\chi_\varepsilon ^2 + \frac{1+t}t \lt(\frac{1}{|\Om|} \int_\Om u_\varepsilon \chi dx\rt)^2
\]
for any $t >0$ and that $u_\varepsilon \to 0$ in $L^2(\Om)$. Therefore, by choosing $t >0$ small, we easily verify that $\exp (\al_\varepsilon u_\varepsilon ^2 )$ is bounded in $L^q(B_{r/2}(p)\cap \Om)$ for some $q >1$. Notice from \eqref{eq:EL} that $\pa u_\varepsilon /\pa \nu =0$ on $\pa \Om$. By boundary elliptic estimate, we obtain the boundedness of $u_\varepsilon $ near $p$, which contradicts to \eqref{eq:assumption}. Thus $|\na u_\varepsilon |^2 dx \rightharpoonup \de_p$ in the sense of measure.
\end{proof}

Denote $r_\varepsilon = \sqrt{\lam_\varepsilon} c_\varepsilon ^{-1} e^{- (\al_\varepsilon/2) c_\varepsilon ^2}$. Our next task is to estimate $r_\varepsilon$.

\begin{claim}\label{claim3}
As $\varepsilon \to 0$ there holds $r_\varepsilon \to 0$.
\end{claim}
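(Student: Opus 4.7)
The plan is to recast the target $r_\varepsilon \to 0$ as the quantitative decay $\lambda_\varepsilon = o\bigl(c_\varepsilon^2 e^{\alpha_\varepsilon c_\varepsilon^2}\bigr)$, which is equivalent after squaring the definition of $r_\varepsilon$. I will establish this by a Chebyshev-type splitting of the integral $\lambda_\varepsilon = \int_\Omega e^{\alpha_\varepsilon u_\varepsilon^2} u_\varepsilon^2 \, dx$ at a threshold level proportional to the maximum $c_\varepsilon$. Concretely, fix any $\theta \in (0,1)$ and decompose $\Omega = \Omega_\varepsilon^- \cup \Omega_\varepsilon^+$, where $\Omega_\varepsilon^- = \{x \in \Omega : |u_\varepsilon(x)| \leqslant \theta c_\varepsilon\}$ and $\Omega_\varepsilon^+ = \Omega \setminus \Omega_\varepsilon^-$.

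On the sub-level piece, the pointwise bounds $u_\varepsilon^2 \leqslant \theta^2 c_\varepsilon^2$ and $e^{\alpha_\varepsilon u_\varepsilon^2} \leqslant e^{\alpha_\varepsilon \theta^2 c_\varepsilon^2}$ give
\[
\int_{\Omega_\varepsilon^-} e^{\alpha_\varepsilon u_\varepsilon^2} u_\varepsilon^2 \, dx \leqslant \theta^2 c_\varepsilon^2 e^{\alpha_\varepsilon \theta^2 c_\varepsilon^2} |\Omega|,
\]
which is $o\bigl(c_\varepsilon^2 e^{\alpha_\varepsilon c_\varepsilon^2}\bigr)$ thanks to the exponential gap $e^{-\alpha_\varepsilon(1-\theta^2) c_\varepsilon^2}$ produced by $c_\varepsilon \to \infty$ and $\alpha_\varepsilon \to 2\pi$. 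On the super-level piece, I will use the crude bound $u_\varepsilon^2 e^{\alpha_\varepsilon u_\varepsilon^2} \leqslant c_\varepsilon^2 e^{\alpha_\varepsilon c_\varepsilon^2}$ coming from $|u_\varepsilon| \leqslant c_\varepsilon$, combined with Chebyshev's inequality
\[
|\Omega_\varepsilon^+| = \bigl|\{|u_\varepsilon| > \theta c_\varepsilon\}\bigr| \leqslant \frac{\|u_\varepsilon\|_2^2}{\theta^2 c_\varepsilon^2}.
\]
The resulting contribution is at most $(\|u_\varepsilon\|_2^2/\theta^2)\, e^{\alpha_\varepsilon c_\varepsilon^2}$, and since $u_\varepsilon \to 0$ strongly in $L^2(\Omega)$ (recorded in the paragraph just before Claim \ref{claim1}) while $c_\varepsilon \to \infty$, this is again $o\bigl(c_\varepsilon^2 e^{\alpha_\varepsilon c_\varepsilon^2}\bigr)$.

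Summing the two pieces yields $\lambda_\varepsilon = o\bigl(c_\varepsilon^2 e^{\alpha_\varepsilon c_\varepsilon^2}\bigr)$, and hence $r_\varepsilon \to 0$. I do not anticipate a real obstacle here: the argument only uses the maximum bound $|u_\varepsilon| \leqslant c_\varepsilon$, the $L^2$-smallness of $u_\varepsilon$, and the divergence $c_\varepsilon \to \infty$, all of which are already in hand. The quantity $r_\varepsilon$ is the natural microscopic length scale for the blow-up rescaling near the concentration point $p \in \partial\Omega$, and $r_\varepsilon \to 0$ is precisely what will allow one to perform the blow-up rescaling in the next stage of the analysis.
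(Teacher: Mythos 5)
Your proof is correct, and it takes a genuinely different route from the paper's. The paper proves the stronger quantitative statement $r_\varepsilon^2 c_\varepsilon^2 e^{\beta c_\varepsilon^2}\to 0$ for every fixed $\beta<2\pi$, by inserting the factor $e^{(\alpha_\varepsilon-\beta)(u_\varepsilon^2-c_\varepsilon^2)}\leqslant 1$ inside the defining integral of $\lambda_\varepsilon$ to reduce to showing $\int_\Omega e^{\beta u_\varepsilon^2}u_\varepsilon^2\,dx\to 0$, and this last step relies on the Chang--Yang inequality \eqref{eq:ChangYang}, H\"older, and $u_\varepsilon\to 0$ in $L^q(\Omega)$. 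Your argument is more elementary: the level-set split at $|u_\varepsilon|=\theta c_\varepsilon$, the pointwise monotone bound $u_\varepsilon^2 e^{\alpha_\varepsilon u_\varepsilon^2}\leqslant c_\varepsilon^2 e^{\alpha_\varepsilon c_\varepsilon^2}$, and Chebyshev use only $|u_\varepsilon|\leqslant c_\varepsilon$, $\|u_\varepsilon\|_2\to 0$, and $c_\varepsilon\to\infty$; you never invoke the Moser--Trudinger machinery nor the margin $\alpha_\varepsilon<2\pi$. In exchange, the paper's version yields an exponential decay rate $r_\varepsilon^2=o\bigl(c_\varepsilon^{-2}e^{-\beta c_\varepsilon^2}\bigr)$ for any $\beta<2\pi$, while your bound only gives $r_\varepsilon^2\lesssim \|u_\varepsilon\|_2^2/c_\varepsilon^2$; since only the qualitative conclusion $r_\varepsilon\to 0$ is used later, both are adequate for the role this claim plays, and your version has the merit of revealing that the Chang--Yang inequality is not actually needed here.
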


\begin{proof}[Proof of Claim \ref{claim3}]
Indeed, for any $\be < 2\pi$ fixed we have $\al_\varepsilon - \be >0$ for any $\varepsilon >0$ sufficient small. In addition, by the definition of $\lambda_\varepsilon$ in \eqref{eq:EL} and $c_\varepsilon$ in \eqref{eq:assumption}, there holds
\[
r_\varepsilon ^2c_\varepsilon ^2 e^{\beta c_\varepsilon ^2} = \int_\Om e^{(\alpha_\varepsilon - \beta) (u_\varepsilon ^2 - c_\varepsilon ^2) } e^{\beta u_\varepsilon ^2} u_\varepsilon ^2 dx \leqslant \int_\Om e^{\beta u_\varepsilon ^2} u_\varepsilon ^2 dx \to 0
\]
by \eqref{eq:ChangYang}, the H\"older inequality, and the fact $u_\varepsilon \to 0$ in $L^q(\Om)$ for any $1 \leqslant q < +\infty$. From this we obtain the desired limit because $c_\varepsilon \to + \infty$.
\end{proof}

We continue studying the blow-up behavior of $u_\varepsilon $ near $p$. Following the argument in \cite{LYzero}, let us take an isothermal coordinate system $(\mathcal U, \phi)$ around the blow-up point $p$ such that:
\begin{itemize}
 \item $\phi(p) =0$, 
 \item $\phi :\mathcal U\cap \pa \Om \to \B_1 \cap \pa \R_+^2$, and 
 \item $\phi: \mathcal U\cap \Om \to \B_1\cap \R^2_+$
\end{itemize}
where $\R^2_+ =\{(y_1,y_2): y_2 >0\}$ and the symbol $\B$ is used to denote balls in $\Rset^2$. In such the coordinates, the original flat metric $g =dx_1^2 + dx_2^2$ has the representation $g = e^{2f(y)}(dy_1^2 + dy_2^2)$ with $f(0) =0$. We define a new function $\util _\varepsilon $ on $\B_1$ by
\begin{equation}\label{eq:UTitle}
\util _\varepsilon (y) = 
\begin{cases}
(u_\varepsilon \circ \phi^{-1}) (y_1,y_2) &\mbox{if $y_2\geqslant 0$,}\\
( u_\varepsilon \circ \phi^{-1}) (y_1,-y_2) &\mbox{if $y_2 < 0$.}
\end{cases}
\end{equation}
Since $\pa_\nu u_\varepsilon =0$ on $\pa \Om$, there holds $\pa_{y_2} \util _\varepsilon (y_1,0) =0$; hence $\util _\varepsilon \in C^1(\B_1)$. Denote $y_\varepsilon =\phi(x_\varepsilon )$ and $\mathcal U_\varepsilon =\{y\in \R^2: y_\varepsilon + r_\varepsilon y \in \B_1\}$. Since $y_\varepsilon \to 0$ and $r_\varepsilon \to 0$, the set $\mathcal U_\varepsilon \to \R^2$. We define two sequences of scaled functions $\psi_\varepsilon $ and $\vphi_\varepsilon $ on $\mathcal U_\varepsilon $ by
\begin{equation}\label{eq:ScaledFunctions}
\left\{
\begin{split}
\psi_\varepsilon (y) &= \frac{\util _\varepsilon (y_\varepsilon + r_\varepsilon y)}{c_\varepsilon },\\
 \vphi_\varepsilon (y) &=c_\varepsilon (\util _\varepsilon (y_\varepsilon + r_\varepsilon y) -c_\varepsilon ).
\end{split}
\right.
\end{equation}
A straightforward computation shows that $\psi_\varepsilon $ and $\vphi_\varepsilon $ satisfy the following equations
\[
\left\{
\begin{split}
-\Delta_y \psi_\varepsilon =& c_\varepsilon^{-2} \psi_\varepsilon e^{\alpha_\varepsilon c_\varepsilon ^2(\psi_\varepsilon ^2 -1)} + r_\varepsilon ^2 \alpha \psi_\varepsilon - c_\varepsilon^{-1} r_\varepsilon ^2 \frac{\mu_\varepsilon }{ \lam_\varepsilon },\\
-\Delta_y \vphi_\varepsilon =& \psi_\varepsilon e^{\al_\varepsilon \vphi_\varepsilon (1 + \psi_\varepsilon )} + c_\varepsilon r_\varepsilon ^2 \alpha \psi_\varepsilon -c_\varepsilon r_\varepsilon ^2 \frac{\mu_\varepsilon }{\lam_\varepsilon },
\end{split}
\right.
\]
on $\mathcal U_\varepsilon $. Since $|\psi_\varepsilon |\leqslant 1$, we know that as $\varepsilon \to 0$
\[
|\Delta_y \psi_\varepsilon |\leqslant c_\varepsilon ^{-2} + r_\varepsilon ^2 \al + c_\varepsilon^{-1} r_\varepsilon ^2 \frac{\mu_\varepsilon }{\lam_\varepsilon } \to 0
\]
uniformly in $\B_R(0)$ for a fixed $R>0$, here we use \eqref{eq:upbound}. Since $\psi_\varepsilon (0) =1$, by standard elliptic theory, we get $\psi_\varepsilon \to 1$ in $C^1(\o{\B_{R/2}(0)})$. Since $\vphi_\varepsilon \leqslant \vphi_\varepsilon (0) =0$, again using standard elliptic theory, we also get $\vphi_\varepsilon \to \vphi$ in $C^1(\o{\B_{R/4}(0)})$ for any $R >0$. Such a local convergence in $\Rset^2$ implies that $\vphi$ solves
\begin{equation}\label{eq:vphi}
\begin{cases}
-\Delta \vphi = e^{4\pi \vphi} &\mbox{in $\R^2$,}\\
\vphi \leqslant \vphi(0) =0,\\
\int_{\R^2} e^{4\pi \vphi} dx \leqslant 2.
\end{cases}
\end{equation}
Using a well-known classification result of Chen and Li \cite{ChenLi}, we get that
\begin{equation}\label{eq:vphian}
\vphi(x) = -\frac1{2\pi} \log \Big(1 + \frac{\pi}2 |x|^2 \Big)
\end{equation}
in $\R^2$. In particular, there holds $\int_{\R^2} e^{4\pi \vphi} dx =2$. By writing $y_\varepsilon =((y_\varepsilon )_1, (y_\varepsilon )_2)$, we also get the following claim.

\begin{claim}\label{claim4}
As $\varepsilon \to 0$ there holds $r_\varepsilon ^{-1}(y_\varepsilon )_2 \to 0$.
\end{claim}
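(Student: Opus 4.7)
The plan is to argue by contradiction: passing to a subsequence, suppose that $t_\varepsilon := r_\varepsilon^{-1}(y_\varepsilon)_2$ converges to some $t^* \in (0, +\infty]$. By the reflection in the definition \eqref{eq:UTitle} of $\util_\varepsilon$, the function $\util_\varepsilon$ is even in $y_2$ on $\B_1$, so $\vphi_\varepsilon$ is invariant under the reflection $(z_1, z_2) \mapsto (z_1, -z_2 - 2t_\varepsilon)$ on $\mathcal U_\varepsilon$. Consequently $\vphi_\varepsilon$ attains its maximum value $0$ at both $z = 0$ and $z = (0, -2t_\varepsilon)$, and I would analyse this second maximum separately depending on whether $t^*$ is finite or infinite.

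In the finite case $t^* < +\infty$, I would fix any $R > 8t^*$ and invoke the $C^1(\overline{\B_{R/4}(0)})$-convergence $\vphi_\varepsilon \to \vphi$ established just before the claim. This forces
\[
\vphi(0, -2t^*) = \lim_{\varepsilon \to 0} \vphi_\varepsilon(0, -2t_\varepsilon) = 0 = \vphi(0),
\]
but the explicit formula \eqref{eq:vphian} shows that $\vphi$ attains its maximum only at the origin. Hence $t^* = 0$, contradicting $t^* > 0$.

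In the divergent case $t^* = +\infty$, the reflected maximum escapes to infinity and I would replace the pointwise argument by a global mass comparison. Since $\vphi_\varepsilon$, $\psi_\varepsilon$ and $\mathcal U_\varepsilon$ are all symmetric with respect to the line $\{z_2 = -t_\varepsilon\}$, one has
\[
\int_{\mathcal U_\varepsilon} \psi_\varepsilon^2 e^{\alpha_\varepsilon \vphi_\varepsilon(1+\psi_\varepsilon)} dz = 2\int_{\mathcal U_\varepsilon \cap \{z_2 > -t_\varepsilon\}} \psi_\varepsilon^2 e^{\alpha_\varepsilon \vphi_\varepsilon(1+\psi_\varepsilon)} dz.
\]
For any fixed $R > 0$ the ball $\B_R(0)$ lies inside $\{z_2 > -t_\varepsilon\}$ once $\varepsilon$ is small, so using the pointwise bound $\psi_\varepsilon^2 e^{\alpha_\varepsilon \vphi_\varepsilon(1+\psi_\varepsilon)} \leqslant 1$ (since $|\psi_\varepsilon| \leqslant 1$ and $\vphi_\varepsilon \leqslant 0$), dominated convergence together with the $C^1_{\mathrm{loc}}$-convergences $\psi_\varepsilon \to 1$ and $\vphi_\varepsilon \to \vphi$ gives
\[
\liminf_{\varepsilon \to 0} \int_{\mathcal U_\varepsilon} \psi_\varepsilon^2 e^{\alpha_\varepsilon \vphi_\varepsilon(1+\psi_\varepsilon)} dz \;\geqslant\; 2 \int_{\B_R(0)} e^{4\pi \vphi} dz,
\]
and sending $R \to +\infty$ with the explicit formula \eqref{eq:vphian} (which yields $\int_{\R^2} e^{4\pi \vphi} dz = 2$) forces a lower bound of $4$ on the left-hand side. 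On the other hand, using $\psi_\varepsilon \vphi_\varepsilon(1+\psi_\varepsilon) = \util_\varepsilon^2 - c_\varepsilon^2$, the identity $r_\varepsilon^2 c_\varepsilon^2 e^{\alpha_\varepsilon c_\varepsilon^2} = \lambda_\varepsilon$, the change of variable $y = y_\varepsilon + r_\varepsilon z$ and the fact that the conformal factor satisfies $f(0) = 0$ bounds the same integral by $2 + o(1)$, producing the contradiction $4 \leqslant 2$.

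The main obstacle I expect is the divergent case $t^* = +\infty$: the finite case is a one-line consequence of the uniqueness of the maximum of the Liouville profile, whereas the divergent case requires the quantitative mass-doubling from the reflection and relies on knowing the total mass $\int_{\R^2} e^{4\pi \vphi} dz$ is exactly $2$, not merely bounded above by $2$.
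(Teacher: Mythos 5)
Your approach splits into two cases, and these differ in character from the paper's single unified argument.

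The finite case ($t^*<+\infty$) is a genuinely different and attractive route. Instead of the paper's mass-ratio estimate, you observe that the reflection symmetry $\util_\varepsilon(y_1,y_2)=\util_\varepsilon(y_1,-y_2)$ forces $\vphi_\varepsilon$ to attain its maximum value $0$ at both $z=0$ and $z=(0,-2t_\varepsilon)$, and then you pass to the limit using the $C^1_{\mathrm{loc}}$ convergence and invoke the strict concavity of the Liouville profile \eqref{eq:vphian}, which has a unique maximum at the origin. This cleanly yields $t^*=0$, contradicting $t^*>0$, and is arguably simpler than the paper's calculation in this regime. It is correct as you wrote it (provided, as you note, $R>8t^*$ so the reflected peak stays inside the region of $C^1$ convergence).

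The infinite case ($t^*=+\infty$), however, has a real gap. You compute
\[
\int_{\mathcal U_\varepsilon}\psi_\varepsilon^2 e^{\alpha_\varepsilon\vphi_\varepsilon(1+\psi_\varepsilon)}\,dz
=\frac1{\lambda_\varepsilon}\int_{\B_1}\util_\varepsilon^2 e^{\alpha_\varepsilon\util_\varepsilon^2}\,dy
=\frac2{\lambda_\varepsilon}\int_{\phi^{-1}(\B_1\cap\R^2_+)}u_\varepsilon^2 e^{\alpha_\varepsilon u_\varepsilon^2}e^{-2f}\,dx,
\]
and wish to bound this by $2+o(1)$ by comparing with $\lambda_\varepsilon=\int_\Omega u_\varepsilon^2 e^{\alpha_\varepsilon u_\varepsilon^2}\,dx$. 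But the conformal factor $e^{-2f}$ is only $1+o(1)$ near the origin, not uniformly on all of $\B_1$; the fact that $f(0)=0$ does not give the bound you claim. To close this gap you must either (i) show that the mass of $u_\varepsilon^2 e^{\alpha_\varepsilon u_\varepsilon^2}\,dx$ concentrates at $p$ relative to $\lambda_\varepsilon$ — which is true, using the local boundedness of $u_\varepsilon$ away from $p$ (from the proof of Claim \ref{claim2}) together with $\liminf\lambda_\varepsilon>0$, but requires an explicit extra step — or (ii) replace the global doubling over $\mathcal U_\varepsilon$ by a doubling over a fixed ball $\B_R(0)$ and its reflected copy, both of which map under $z\mapsto y_\varepsilon+r_\varepsilon z$ to subsets of $\B_{2Rr_\varepsilon}(0)$ where $e^{\pm 2f}=1+o_\varepsilon(1)$. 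Option (ii) is essentially the paper's approach: the paper never integrates over all of $\mathcal U_\varepsilon$, but always over the shrinking ball $\B_{Rr_\varepsilon}(y_\varepsilon)$, and its ratio argument (full ball mass $/$ half-ball mass) handles both $a<\infty$ and $a=\infty$ in one stroke without a case distinction. Also note a small typo: the identity should read $\vphi_\varepsilon(1+\psi_\varepsilon)=\util_\varepsilon^2(y_\varepsilon+r_\varepsilon z)-c_\varepsilon^2$, without the prefactor $\psi_\varepsilon$.
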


\begin{proof}[Proof of Claim \ref{claim4}]
Indeed, by way of contradiction, we have that 
$$\limsup_{\varepsilon \to 0} r_\varepsilon ^{-1}(y_\varepsilon )_2 = a >0$$ 
and that
\begin{align*}
\int_{\B_R(0)} e^{4\pi \vphi} dy &=\lim_{\varepsilon \to 0} \int_{\B_R(0)} e^{\al_\varepsilon (1+\psi_\varepsilon ) \vphi_\varepsilon } dy =\lim_{\varepsilon \to 0} \int_{\B_R(0)} e^{\alpha_\varepsilon \util _\varepsilon ^2(y_\varepsilon + r_\varepsilon y) -\al_\varepsilon c_\varepsilon ^2} dy\\
&=\lim_{\varepsilon \to 0} \frac{c_\varepsilon ^2}{\lam_\varepsilon }\int_{\B_{Rr_\varepsilon }(y_\varepsilon )} e^{\al_\varepsilon \util _\varepsilon ^2} dy \leqslant \lim_{\varepsilon \to 0} c_\varepsilon ^2 \frac{\int_{\B_{Rr_\varepsilon }(y_\varepsilon )} e^{\al_\varepsilon \util _\varepsilon ^2} dy}{\int_{\B_{Rr_\varepsilon }(y_\varepsilon )\cap \R^2_+} \util _\varepsilon ^2 e^{\al_\varepsilon \util _\varepsilon ^2} dy}\\
&=\lim_{\varepsilon \to 0} (1+ o_\varepsilon (R))\frac{\int_{\B_{Rr_\varepsilon }(y_\varepsilon )} e^{\al_\varepsilon \util _\varepsilon ^2} dy}{\int_{\B_{Rr_\varepsilon }(y_\varepsilon )\cap \R^2_+} e^{\al_\varepsilon \util _\varepsilon ^2} dy},
\end{align*}
since $\util _\varepsilon ^2 = c_\varepsilon ^2(1+ o_\varepsilon (R))$ on $\B_{Rr_\varepsilon }(y_\varepsilon )$. Making use of a change of variables, we get
\begin{align*}
\int_{\B_R(0)} e^{4\pi \vphi} dy &\leqslant \lim_{\varepsilon \to 0} (1+ o_\varepsilon (R)) \frac{\int_{\B_R(0)} e^{\al_\varepsilon (1+\psi_\varepsilon ) \vphi_\varepsilon } dy}{\int_{\B_R(0)\cap\{y_2 >-(y_\varepsilon )_2/r_\varepsilon \}} e^{\al_\varepsilon (1+\psi_\varepsilon ) \vphi_\varepsilon } dy}\\
&= \frac{\int_{\B_R(0)} e^{4\pi \vphi} dy}{\int_{\B_R(0)\cap\{y_2 \geqslant -a\}} e^{4\pi \vphi} dy}
\end{align*}
Letting $R\to\infty$ we get $\int_{\R^2} e^{4\pi \vphi} dy < 2$ which is impossible.
\end{proof}

For any $c>1$, define $u_\varepsilon ^c =\min\{c_\varepsilon /c, u_\varepsilon \}$. Then we have the following result.

\begin{lemma}\label{truncation}
For any $c>1$, there holds
\[
\lim_{\varepsilon \to 0} \int_\Om |\na u_\varepsilon ^c|^2 dx = \frac1c.
\]
\end{lemma}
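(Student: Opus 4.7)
The natural plan is to test the Euler--Lagrange equation \eqref{eq:EL} against $u_\varepsilon^c$ and pass to the limit $\varepsilon\to 0$. Using the Neumann condition $\partial_\nu u_\varepsilon=0$ to kill the boundary term, together with the identities $\nabla u_\varepsilon^c=\nabla u_\varepsilon$ a.e.\ on $\{u_\varepsilon<c_\varepsilon/c\}$ and $\nabla u_\varepsilon^c=0$ a.e.\ on $\{u_\varepsilon\geq c_\varepsilon/c\}$, integration by parts gives
\[
\int_\Omega |\nabla u_\varepsilon^c|^2\,dx = \lambda_\varepsilon^{-1}\int_\Omega e^{\alpha_\varepsilon u_\varepsilon^2}u_\varepsilon u_\varepsilon^c\,dx + \alpha\int_\Omega u_\varepsilon u_\varepsilon^c\,dx - \lambda_\varepsilon^{-1}\mu_\varepsilon\int_\Omega u_\varepsilon^c\,dx.
\]
The last two terms are trivially negligible as $\varepsilon\to 0$: the pointwise bound $|u_\varepsilon^c|\leq|u_\varepsilon|$ together with the strong convergence $u_\varepsilon\to 0$ in every $L^q(\Omega)$ handles the $\alpha$-term, while \eqref{eq:upbound} controls $\lambda_\varepsilon^{-1}\mu_\varepsilon$. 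It therefore suffices to show $\lambda_\varepsilon^{-1}\int_\Omega e^{\alpha_\varepsilon u_\varepsilon^2}u_\varepsilon u_\varepsilon^c\,dx \to 1/c$.

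I would split this integral between $\phi^{-1}(\B_{Rr_\varepsilon}(y_\varepsilon))\cap\Omega$ and its complement for a large fixed $R>0$. On the inner piece, the uniform convergence $\psi_\varepsilon\to 1$ on $\B_R(0)$ guarantees $u_\varepsilon\geq c_\varepsilon/c$ for $\varepsilon$ small, so $u_\varepsilon u_\varepsilon^c=(c_\varepsilon/c)u_\varepsilon$. Writing $e^{\alpha_\varepsilon u_\varepsilon^2}=\lambda_\varepsilon c_\varepsilon^{-2}r_\varepsilon^{-2}\,e^{\alpha_\varepsilon\varphi_\varepsilon(1+\psi_\varepsilon)}$ (which follows from the definition of $r_\varepsilon$ and the scaled identity $u_\varepsilon^2-c_\varepsilon^2=\varphi_\varepsilon(1+\psi_\varepsilon)$) and rescaling by $z=(y-y_\varepsilon)/r_\varepsilon$, the inner integral becomes
\[
\frac 1c\int_{\B_R(0)\cap\{z_2>-(y_\varepsilon)_2/r_\varepsilon\}} e^{\alpha_\varepsilon\varphi_\varepsilon(1+\psi_\varepsilon)}\,\psi_\varepsilon\, e^{2f(y_\varepsilon+r_\varepsilon z)}\,dz.
\]
Claim \ref{claim4} identifies the limiting domain as $\{z_2>0\}$; combined with $\psi_\varepsilon\to 1$, $\alpha_\varepsilon\varphi_\varepsilon(1+\psi_\varepsilon)\to 4\pi\varphi$ locally uniformly and $f(y_\varepsilon+r_\varepsilon z)\to f(0)=0$, the inner piece converges to $I(R)/c$, where $I(R):=\int_{\B_R(0)\cap\{z_2>0\}}e^{4\pi\varphi}\,dz$. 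For the outer piece, the elementary inequality $0\leq u_\varepsilon u_\varepsilon^c\leq u_\varepsilon^2$ together with $\lambda_\varepsilon=\int_\Omega e^{\alpha_\varepsilon u_\varepsilon^2}u_\varepsilon^2\,dx$ bounds it above by $1-\lambda_\varepsilon^{-1}\int_{\phi^{-1}(\B_{Rr_\varepsilon}(y_\varepsilon))\cap\Omega}e^{\alpha_\varepsilon u_\varepsilon^2}u_\varepsilon^2\,dx$, which by the same change of variables tends to $1-I(R)$. Finally, the Chen--Li classification \eqref{eq:vphian} and the radial symmetry of $\varphi$ give $\int_{\R^2}e^{4\pi\varphi}\,dz=2$, hence $I(R)\to 1$ as $R\to\infty$. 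Sandwiching $\liminf$ and $\limsup$ of the key limit between $I(R)/c$ and $I(R)/c+(1-I(R))$ and letting $R\to\infty$ pins both to $1/c$, closing the argument.

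The main technical obstacle is the boundary bookkeeping: because $y_\varepsilon\to 0\in\partial\R^2_+$, the rescaled domain is only the shifted half-space $\{z_2>-(y_\varepsilon)_2/r_\varepsilon\}$, and Claim \ref{claim4} is essential in order to identify its limit as precisely $\{z_2>0\}$ — that in turn is what makes the factor $I(R)$ converge to exactly $1$ (rather than $2$, as would happen in a full-plane limit). The isothermal conformal factor $e^{2f}$ is harmless since $f(0)=0$, and everything else follows routinely from the scaled coordinates already set up in Section \ref{sec-Asymptotic}.
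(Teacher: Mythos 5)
Your proposal is correct, and its overall architecture is essentially the paper's: integrate by parts against $u_\varepsilon^c$, drop the lower-order terms using $u_\varepsilon\to 0$ in $L^q$ and \eqref{eq:upbound}, localize to $\phi^{-1}(\B_{Rr_\varepsilon}(y_\varepsilon))$, and use the scaled convergence $\psi_\varepsilon\to 1$, $\varphi_\varepsilon\to\varphi$ together with Claim~\ref{claim4} to evaluate the inner contribution as $I(R)/c$. Your algebraic identity $e^{\alpha_\varepsilon u_\varepsilon^2}=\lambda_\varepsilon c_\varepsilon^{-2}r_\varepsilon^{-2}e^{\alpha_\varepsilon\varphi_\varepsilon(1+\psi_\varepsilon)}$ is correct, and the monotonicity $0\leqslant u_\varepsilon u_\varepsilon^c\leqslant u_\varepsilon^2$ is valid in all three sign regimes.

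Where you genuinely diverge from the paper is in how you close the argument. The paper establishes only the lower bound $\liminf_{\varepsilon\to 0}\int_\Omega|\nabla u_\varepsilon^c|^2\,dx\geqslant 1/c$, then proves the companion bound $\liminf_{\varepsilon\to 0}\int_\Omega|\nabla(u_\varepsilon-c_\varepsilon/c)_+|^2\,dx\geqslant 1-1/c$ by the same rescaling, and finally combines them with the exact energy partition
\[
\int_\Omega|\nabla u_\varepsilon^c|^2\,dx+\int_\Omega\Big|\nabla\Big(u_\varepsilon-\frac{c_\varepsilon}{c}\Big)_+\Big|^2\,dx=\int_\Omega|\nabla u_\varepsilon|^2\,dx=1+o_\varepsilon(1),
\]
which forces both liminfs to be limits with equality. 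You instead prove a matching \emph{upper} bound directly, observing that the outer contribution to $\lambda_\varepsilon^{-1}\int_\Omega e^{\alpha_\varepsilon u_\varepsilon^2}u_\varepsilon u_\varepsilon^c\,dx$ is controlled by $1-\lambda_\varepsilon^{-1}\int_{\text{inner}}e^{\alpha_\varepsilon u_\varepsilon^2}u_\varepsilon^2\,dx\to 1-I(R)$, and then letting $R\to\infty$ squeezes both sides. Both routes are sound; the paper's energy-partition trick is slightly slicker because it avoids tracking an upper bound on the outer integral, while your sandwich argument is more self-contained since it never invokes the truncation-from-above function $(u_\varepsilon-c_\varepsilon/c)_+$ at all.
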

\begin{proof}
We follows the arguments in \cite{Li2001}; see also \cite{Yang06}. Since $\pa_\nu u_\varepsilon =0$ on $\pa \Om$, using integration by parts we have
\begin{align*}
\int_\Om |\na u_\varepsilon ^c|^2 dx &= \int_\Om \na u_\varepsilon ^c \na u_\varepsilon dx = \int_\Om u_\varepsilon ^c (-\Delta u_\varepsilon ) dx\\
&=\frac1{\lam_\varepsilon } \int_\Om u_\varepsilon ^c u_\varepsilon e^{\al_\varepsilon u_\varepsilon ^2}dx + \al \int_\Om u_\varepsilon ^c u_\varepsilon dx -\frac{\mu_\varepsilon }{\lam_\varepsilon } \int_\Om u_\varepsilon ^c dx\\
&\geqslant \frac{c_\varepsilon }{c\lam_\varepsilon } \int_{\{u_\varepsilon > c_\varepsilon /c\}} u_\varepsilon e^{\al_\varepsilon u_\varepsilon ^2} dx + o_\varepsilon (1).
\end{align*}
For any $R >0$, since $\psi_\varepsilon \to 1$ in $C^1(\o{\B_R(0)})$, there holds $\phi^{-1}(\B_{Rr_\varepsilon }(y_\varepsilon )\cap \R^2_+) \subset \{u_\varepsilon > c_\varepsilon /c\}$ for $\varepsilon >0$ sufficient small. Thus
\begin{align*}
\int_\Om |\na u_\varepsilon ^c|^2 dx &\geqslant \frac{c_\varepsilon }{c\lam_\varepsilon } \int_{\phi^{-1}(\B_{Rr_\varepsilon }(y_\varepsilon )\cap \R^2_+)} u_\varepsilon e^{\al_\varepsilon u_\varepsilon ^2} dx + o_\varepsilon (1)\\
&=\frac{c_\varepsilon }{c\lam_\varepsilon } \int_{\B_{Rr_\varepsilon }(y_\varepsilon )\cap \R^2_+} \util _\varepsilon e^{\al_\varepsilon \util _\varepsilon ^2} dy + o_\varepsilon (1)\\
&=\frac1{c} \int_{\B_R(0)\cap\{y_2 > - (y_\varepsilon )_2 / r_\varepsilon \}}\psi_\varepsilon e^{\al_\varepsilon (1+ \psi_\varepsilon ) \vphi_\varepsilon } dy + o_\varepsilon (1).
\end{align*}
Whence
\[
\lim_{\varepsilon \to 0} \int_\Om |\na u_\varepsilon ^c|^2 dx \geqslant \frac1c \int_{\B_R(0)\cap \{y_2\geqslant 0\}} e^{4\pi \vphi} dy.
\]
Let $R\to \infty$ we get 
\[
\lim_{\varepsilon \to 0} \int_\Om |\na u_\varepsilon ^c|^2 dx \geqslant \frac 1c.
\] 
By the same way, we have
\[
\lim_{\varepsilon \to 0} \int_\Om \Big|\na\Big( u_\varepsilon -\frac{c_\varepsilon }c\Big)_+\Big|^2 dx \geqslant 1 -\frac1c.
\]
(Here we use the notation $f_+$ to denote the positive part of $f$.) Notice that
\[
\int_\Om |\na u_\varepsilon ^c|^2 dx + \int_\Om \Big|\na\Big(u_\varepsilon -\frac{c_\varepsilon }c\Big)_+\Big|^2 dx = \int_\Omega |\na u_\varepsilon |^2 dx = 1 + o_\varepsilon (1).
\]
From this we get the conclusion.
\end{proof}

\begin{lemma}\label{eq:rel}
There holds
\[
\limsup_{\varepsilon \to 0} \int_\Om e^{\al_\varepsilon u_\varepsilon ^2} dx \leqslant |\Om| + \limsup_{\varepsilon \to 0} \frac{\lam_\varepsilon }{c_\varepsilon ^2}.
\]
\end{lemma}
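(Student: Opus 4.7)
The plan is to split the integral over $\Omega$ using the super-level sets of $u_\varepsilon$. Fix a parameter $c > 1$, to be sent to $1^+$ at the very end, and write $\Omega = A_c \cup B_c$ with $A_c = \{u_\varepsilon > c_\varepsilon/c\}$ and $B_c = \{u_\varepsilon \leq c_\varepsilon/c\}$. On $A_c$, the pointwise inequality $1 < c^2 u_\varepsilon^2 / c_\varepsilon^2$ immediately gives
\[
\int_{A_c} e^{\al_\varepsilon u_\varepsilon^2} \, dx \leq \frac{c^2}{c_\varepsilon^2} \int_\Omega u_\varepsilon^2 \, e^{\al_\varepsilon u_\varepsilon^2} \, dx = \frac{c^2 \lam_\varepsilon}{c_\varepsilon^2}.
\]
The remaining integral on $B_c$ is handled through the one-sided truncation $u_\varepsilon^c = \min\{c_\varepsilon/c, u_\varepsilon\}$ already used in Lemma \ref{truncation}: on $B_c$ we have $u_\varepsilon^c = u_\varepsilon$, so $\int_{B_c} e^{\al_\varepsilon u_\varepsilon^2} \, dx \leq \int_\Omega e^{\al_\varepsilon (u_\varepsilon^c)^2} \, dx$, and the task reduces to proving
\[
\limsup_{\varepsilon \to 0} \int_\Omega e^{\al_\varepsilon (u_\varepsilon^c)^2} \, dx \leq |\Omega|.
\]

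To establish this reduction, I would decompose $u_\varepsilon^c = v_\varepsilon^c + \bar u_\varepsilon^c$, where $\bar u_\varepsilon^c = |\Omega|^{-1}\int_\Omega u_\varepsilon^c \, dx$, so that $v_\varepsilon^c$ has zero mean. Since $|u_\varepsilon^c| \leq |u_\varepsilon|$ pointwise and $u_\varepsilon \to 0$ in every $L^q(\Omega)$, one obtains $\bar u_\varepsilon^c \to 0$ and $\|v_\varepsilon^c\|_2 \to 0$, while Lemma \ref{truncation} yields $\|\na v_\varepsilon^c\|_2^2 = \|\na u_\varepsilon^c\|_2^2 \to 1/c$. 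For any $\de \in (0, c-1)$, the elementary inequality $(u_\varepsilon^c)^2 \leq (1+\de)(v_\varepsilon^c)^2 + (1 + 1/\de)(\bar u_\varepsilon^c)^2$ then gives
\[
\int_\Omega e^{\al_\varepsilon (u_\varepsilon^c)^2} \, dx \leq e^{\al_\varepsilon(1 + 1/\de)(\bar u_\varepsilon^c)^2} \int_\Omega e^{\al_\varepsilon (1+\de)(v_\varepsilon^c)^2} \, dx,
\]
in which the scalar prefactor tends to $1$, so the task further reduces to proving $\int_\Omega e^{\al_\varepsilon(1+\de)(v_\varepsilon^c)^2} \, dx \to |\Omega|$.

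For this last step, I plan to apply the Chang--Yang inequality \eqref{eq:ChangYang} to the normalization $v_\varepsilon^c/\|\na v_\varepsilon^c\|_2$, which is licit as $v_\varepsilon^c$ has mean zero: this gives a uniform $L^1$-bound on $e^{p\al_\varepsilon(1+\de)(v_\varepsilon^c)^2}$ for any $p > 1$ close enough to $1$ that $p(1+\de)/c < 1$. Combining $e^t - 1 \leq t e^t$ with H\"older's inequality then yields
\[
\int_\Omega \bigl(e^{\al_\varepsilon(1+\de)(v_\varepsilon^c)^2} - 1\bigr) dx \leq \al_\varepsilon(1+\de)\, \|v_\varepsilon^c\|_{2p'}^2 \Bigl( \int_\Omega e^{p\al_\varepsilon(1+\de)(v_\varepsilon^c)^2} \, dx \Bigr)^{1/p},
\]
and the right-hand side tends to $0$ because $v_\varepsilon^c \to 0$ in every $L^q(\Omega)$ (combining the $H^1$-bound with the strong $L^2$-convergence). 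Summing the contributions on $A_c$ and $B_c$ gives $\limsup_\varepsilon \int_\Omega e^{\al_\varepsilon u_\varepsilon^2} dx \leq |\Omega| + c^2 \limsup_\varepsilon \lam_\varepsilon / c_\varepsilon^2$, and finally letting $c \to 1^+$ produces the desired estimate.

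The main obstacle I anticipate is precisely the last reduction, that is, upgrading the Chang--Yang bound on $\{e^{\al_\varepsilon(1+\de)(v_\varepsilon^c)^2}\}$ from mere $L^1$-boundedness to convergence to $|\Omega|$: the truncation $u_\varepsilon^c$ destroys the mean-zero constraint and it is only restored after subtracting $\bar u_\varepsilon^c$, so one has to check carefully that the error made when replacing $u_\varepsilon^c$ by $v_\varepsilon^c$ in the exponent is absorbed by the strong $L^q$-convergence $v_\varepsilon^c \to 0$. The rest of the argument is routine.
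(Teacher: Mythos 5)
Your proof is correct and follows essentially the same route as the paper: the same truncation $u_\varepsilon^c = \min\{c_\varepsilon/c, u_\varepsilon\}$, the same split of $\Omega$ along $\{u_\varepsilon > c_\varepsilon/c\}$, the same appeal to Lemma \ref{truncation} plus Chang--Yang to gain an $L^p$-bound, and the same $c \to 1^+$ limit at the end (you also correctly use $c>1$; the paper's ``Fix $0 < c < 1$'' is a typo, as the truncation would be vacuous otherwise). The only difference is cosmetic: where the paper simply asserts $\int_\Omega e^{\al_\varepsilon (u_\varepsilon^c)^2}dx \to |\Omega|$ from the $L^q$-bound, you make this explicit via $e^t - 1 \leqslant t e^t$, H\"older, and $\|v_\varepsilon^c\|_{2p'} \to 0$.
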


\begin{proof}
Fix $0< c< 1$ and define
\[
\left\{
\begin{split}
u_\varepsilon ^c &= \min\{u_\varepsilon , c_\varepsilon /c\},\\
v_\varepsilon ^c &=u_\varepsilon ^c -\frac1{|\Om|} \int_\Om u_\varepsilon ^c dx.
\end{split}
\right.
\] 
By Lemma \ref{truncation} and \eqref{eq:ChangYang}, there exists $p >1$ such that $\exp (\al_\varepsilon (v_\varepsilon ^c)^2)$ is bounded in $L^p(\Om)$ for $\varepsilon $ small. Since $u_\varepsilon \to 0$ in $L^r(\Om)$ for any $r< +\infty$, then there exists $q >1$ such that $\exp (\al_\varepsilon (u_\varepsilon ^c)^2)$ is bounded in $L^q(\Om)$ for $\varepsilon $ small enough. Thus
\[
\lim_{\varepsilon \to 0} \int_\Om e^{\al_\varepsilon (u_\varepsilon ^c)^2} dx = |\Om|.
\]
In the other hand, we have
\begin{align*}
\int_\Om e^{\al_\varepsilon u_\varepsilon ^2} &\leqslant \int_\Om e^{\al_\varepsilon (u_\varepsilon ^c)^2} dx + \int_{\{u_\varepsilon > c_\varepsilon /c\}} e^{\al_\varepsilon u_\varepsilon ^2} dx\\
&\leqslant \int_\Om e^{\al_\varepsilon (u_\varepsilon ^c)^2} dx + \frac{c^2}{c_\varepsilon ^2} \int_\Om u_\varepsilon ^2 e^{\al_\varepsilon (u_\varepsilon ^c)^2} dx\\
&=\int_\Om e^{\al_\varepsilon (u_\varepsilon ^c)^2} dx + c^2\frac{\lam_\varepsilon }{c_\varepsilon ^2}.
\end{align*}
Letting $\varepsilon \to 0$ and then letting $c\to 1$ we get the conclusion.
\end{proof}

\begin{lemma}\label{lemLim=Sup}
There holds
\[
\limsup_{\varepsilon \to 0} \int_\Om e^{\al_\varepsilon u_\varepsilon ^2} dx = \sup_{u\in \Hcirc (\Omega), \|u\|_{1,\alpha} =1} \int_{\Omega} e^{2\pi u^2} dx.
\]
\end{lemma}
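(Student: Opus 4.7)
My plan is to derive the equality from two one-sided inequalities, each of which is a soft argument using only Proposition \ref{subcritical} and the monotone convergence theorem; none of the blow-up machinery developed earlier in this section is needed. Throughout, let $S$ denote the right-hand side of the claim, allowing $S \in (|\Omega|,+\infty]$ a priori.

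The easy inequality ``$\limsup \leqslant S$'' follows from the pointwise bound $e^{\alpha_\varepsilon u_\varepsilon^2} \leqslant e^{2\pi u_\varepsilon^2}$, valid because $\alpha_\varepsilon = 2\pi - \varepsilon < 2\pi$, together with the admissibility of $u_\varepsilon$ guaranteed by Proposition \ref{subcritical}. Since $u_\varepsilon \in \Hcirc(\Omega)$ with $\|u_\varepsilon\|_{1,\alpha} = 1$, we obtain $\int_\Omega e^{\alpha_\varepsilon u_\varepsilon^2}\,dx \leqslant \int_\Omega e^{2\pi u_\varepsilon^2}\,dx \leqslant S$ for every $\varepsilon$, and taking $\limsup_{\varepsilon \to 0}$ concludes this direction.

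For the reverse inequality ``$\limsup \geqslant S$'', I would fix an arbitrary test function $u \in \Hcirc(\Omega)$ with $\|u\|_{1,\alpha} = 1$ and exploit the maximizing property of $u_\varepsilon$:
\[
\int_\Omega e^{\alpha_\varepsilon u^2}\,dx \leqslant C_\varepsilon = \int_\Omega e^{\alpha_\varepsilon u_\varepsilon^2}\,dx.
\]
Because $\alpha_\varepsilon \uparrow 2\pi$ as $\varepsilon \downarrow 0$, the integrands $e^{\alpha_\varepsilon u^2}$ increase monotonically to $e^{2\pi u^2}$, so the monotone convergence theorem, which remains valid even if the limit is $+\infty$, yields
\[
\int_\Omega e^{2\pi u^2}\,dx \leqslant \liminf_{\varepsilon \to 0} \int_\Omega e^{\alpha_\varepsilon u_\varepsilon^2}\,dx \leqslant \limsup_{\varepsilon \to 0} \int_\Omega e^{\alpha_\varepsilon u_\varepsilon^2}\,dx.
\]
Passing to the supremum over admissible $u$ on the left-hand side produces $S \leqslant \limsup$ and closes the chain.

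There is no genuine technical obstacle here; the only subtle point worth flagging is a potential appearance of circularity. Lemma \ref{lemLim=Sup} is invoked inside Proposition \ref{subcritical} to establish \eqref{eq:liminf}, while the finiteness of $S$ is itself the content of Theorem \ref{Main1}, so one might worry that both statements are being used prematurely. The argument above, however, makes no use of either the finiteness of $S$ or of \eqref{eq:liminf}, so it is logically self-contained; the downstream application in Proposition \ref{subcritical} only requires the strict inequality $S > |\Omega|$, which is immediate since any non-constant admissible $u$ already satisfies $\int_\Omega e^{2\pi u^2}\,dx > |\Omega|$.
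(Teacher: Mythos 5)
Your proof is correct and follows essentially the same route as the paper's: both directions come from the admissibility and the maximizing property of $u_\varepsilon$ combined with a standard limit theorem as $\alpha_\varepsilon \uparrow 2\pi$, the only cosmetic difference being that you invoke monotone convergence where the paper appeals to Fatou's lemma (the two are interchangeable here, and MCT is arguably more natural given the monotonicity). Your closing remark that the apparent circularity with \eqref{eq:liminf} in Proposition~\ref{subcritical} is harmless is also correct and worth noting, since the proof uses only the existence and normalization of the subcritical maximizers, not the lower bound on $\lambda_\varepsilon$.
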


\begin{proof}
This is elementary. Indeed, using the definition of $C_\varepsilon$ and \eqref{eq:sub} we know that $\int_\Om \exp \big( (2\pi-\varepsilon ) u_\varepsilon ^2 \big) dx$ is monotone increasing with respect to $\varepsilon>0$. Hence the limit $\limsup_{\varepsilon \to 0}\int_\Om \exp \big( (2\pi-\varepsilon ) u_\varepsilon ^2 \big) dx$ exists, however, it could be infinity. To conclude the lemma, we first observe that for arbitrary $\varepsilon$, the function $u_\varepsilon \in \Hcirc (\Om)$ with $\|u_\varepsilon \|_{1,\alpha} \leqslant 1$. Therefore,
\[
\limsup_{\varepsilon \to 0} \int_\Om e^{\al_\varepsilon u_\varepsilon ^2} dx  \leqslant \sup_{u\in \Hcirc (\Om), \|u\|_{1,\alpha} \leqslant 1} \int_\Om e^{2\pi u^2} dx.
\]
Conversely, for any function $u\in \Hcirc (\Om)$ satisfying $\|u\|_{1,\al} =1$, by the Fatou lemma, we have
\[
\int_\Om e^{2\pi u^2} dx \leqslant \liminf_{\varepsilon \to 0} \int_\Om e^{\al_\varepsilon u^2} dx \leqslant \liminf_{\varepsilon \to 0} \int_\Om e^{\al_\varepsilon u_\varepsilon ^2} dx .
\]
Taking supremum all over such functions $u$, we get 
\[
\sup_{u\in \Hcirc (\Om), \|u\|_{1,\alpha} \leqslant 1} \int_\Om e^{2\pi u^2} dx \leqslant \liminf_{\varepsilon \to 0} \int_\Om e^{\al_\varepsilon u_\varepsilon ^2} dx.
\]
Thus, we have proved that 
\[
\lim_{\varepsilon \to 0} \int_\Om e^{\al_\varepsilon u_\varepsilon ^2} dx = \sup_{u\in \Hcirc (\Om), \|u\|_{1,\alpha} \leqslant 1} \int_\Om e^{2\pi u^2} dx.
\]
as claimed.
\end{proof}

Although we do have the strong convergence $u_\varepsilon \to 0$ in $L^p (\Omega)$ for any $1 \leqslant p < +\infty$ and the convergence in measure established in Claim \ref{claim2}, it is not clear how $u_\varepsilon$ converges; in fact, we can say more about $u_\varepsilon$. In the final part of our blow-up analysis, we provide an asymptotic behavior of $u_\varepsilon $ away from the blow-up point $p$.

\begin{proposition}\label{Green}
We have $c_\varepsilon u_\varepsilon \rightharpoonup \G$ weakly in $W^{1,q}(\Om)$ for any $1< q < 2$, where $\G\in C^\infty(\o{\Om}\setminus\{p\})$ is a Green function satisfying the following equation
\begin{equation}\label{eq:Green}
\left\{
\begin{split}
-\Delta \G &= \de_p + \alpha \G -|\Om|^{-1} \qquad \mbox{ in } \Om,\\
\pa_\nu \G &=0 \qquad \mbox{ on } \pa \Om \setminus \{p\},\\
\int_\Om \G dx &= 0,
\end{split}
\right.
\end{equation}
where $\de_p$ is the Dirac measure at $p$. Moreover, there holds $c_\varepsilon u_\varepsilon \to \G$ in $C^\infty_{\rm loc}(\Omega)$.
\end{proposition}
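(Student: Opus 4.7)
The plan is to multiply the Euler--Lagrange equation \eqref{eq:EL} by $c_\varepsilon$ and study $v_\varepsilon := c_\varepsilon u_\varepsilon$, which satisfies
\[
-\Delta v_\varepsilon - \alpha v_\varepsilon = \frac{c_\varepsilon u_\varepsilon}{\lam_\varepsilon} e^{\al_\varepsilon u_\varepsilon^2} - \frac{c_\varepsilon \mu_\varepsilon}{\lam_\varepsilon} =: h_\varepsilon \quad \text{in } \Om,\qquad \pa_\nu v_\varepsilon = 0 \text{ on } \pa \Om,
\]
with $\int_\Om v_\varepsilon\, dx = 0$. The strategy is to bound $h_\varepsilon$ uniformly in $L^1(\Om)$, invoke $L^1$-elliptic regularity for the Neumann problem (of Stampacchia type) to obtain a uniform bound on $v_\varepsilon$ in $W^{1,q}(\Om)$ for every $q \in (1,2)$, extract a weak limit $\G$, and identify it via the weak formulation.

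For the $L^1$ estimate on $h_\varepsilon$, I would split $\Om$ into $B_\delta(p) \cap \Om$ and $\Om \setminus B_\delta(p)$. Outside the small ball, a cut-off argument mirroring Claim \ref{claim1} (based on the Chang--Yang inequality \eqref{eq:ChangYang} together with Claim \ref{claim2}) gives uniform $L^{q_0}$-boundedness of $e^{\al_\varepsilon u_\varepsilon^2}$ for some $q_0 > 1$, and standard elliptic theory applied to \eqref{eq:EL} upgrades this to uniform smallness of $u_\varepsilon$; combined with \eqref{eq:upbound}, this yields the $L^1$ bound of $h_\varepsilon$ on $\Om \setminus B_\delta(p)$. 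Inside the ball, I would change variables via $y = (x - x_\varepsilon)/r_\varepsilon$ and use $r_\varepsilon^2 c_\varepsilon^2 e^{\al_\varepsilon c_\varepsilon^2} = \lam_\varepsilon$ to rewrite the dominant part of $h_\varepsilon$ in the rescaled coordinates as $\psi_\varepsilon \, e^{\al_\varepsilon(1+\psi_\varepsilon)\vphi_\varepsilon}$ (up to a conformal factor $\to 1$), which converges locally in $C^1$ to $e^{4\pi \vphi}$, an integrable profile by \eqref{eq:vphian}.

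With the $W^{1,q}$-bound in hand, extract a subsequence $v_\varepsilon \rightharpoonup \G$ weakly in $W^{1,q}(\Om)$, strongly in $L^s(\Om)$ for every $s < \infty$, and a.e. For $\phi \in C^\infty(\o \Om)$,
\[
\int_\Om \na v_\varepsilon \cdot \na \phi\, dx = \int_\Om h_\varepsilon \phi\, dx + \alpha \int_\Om v_\varepsilon \phi\, dx.
\]
The core task is to show $\lam_\varepsilon^{-1} c_\varepsilon u_\varepsilon e^{\al_\varepsilon u_\varepsilon^2} \rightharpoonup \de_p$ as Radon measures: outside any fixed neighborhood of $p$, the integrand vanishes uniformly by the first step, while near $p$ the rescaling computation gives
\[
\lim_{R\to\infty}\lim_{\varepsilon \to 0}\int_{B_R(0) \cap \{y_2 > -(y_\varepsilon)_2/r_\varepsilon\}} \psi_\varepsilon e^{\al_\varepsilon(1+\psi_\varepsilon) \vphi_\varepsilon}\, dy = \tfrac 12 \int_{\R^2} e^{4\pi \vphi}\, dy = 1,
\]
since $(y_\varepsilon)_2/r_\varepsilon \to 0$ by Claim \ref{claim4} and $\vphi$ is radially symmetric. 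Taking $\phi \equiv 1$ and using $\int_\Om v_\varepsilon\, dx = 0$ then forces $c_\varepsilon \mu_\varepsilon /\lam_\varepsilon \to |\Om|^{-1}$. Passing to the limit in the weak formulation produces exactly \eqref{eq:Green}; the Neumann condition $\pa_\nu \G = 0$ on $\pa \Om \setminus \{p\}$ and the mean-zero normalization are preserved.

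The $C^\infty_{\rm loc}(\Om)$ convergence follows from a standard elliptic bootstrap: on any compact set $K \subset \Om \setminus \{p\}$, the uniform smallness of $u_\varepsilon$ makes the coefficient $e^{\al_\varepsilon u_\varepsilon^2}$ tend to $1$ smoothly, so the equation for $v_\varepsilon$ on $K$ has uniformly smooth data, and $L^p$/Schauder theory yields convergence in every $C^k(K)$. The main obstacle I expect is the $L^1$ bound on $h_\varepsilon$ in the first step, which requires coupling the boundary blow-up analysis (in particular, reflecting to the full ball as $\widetilde u_\varepsilon$ and using Claim \ref{claim4}) with a uniform control of $c_\varepsilon u_\varepsilon$ outside a fixed neighborhood of $p$; this is delicate because that control is closely tied to the very convergence being established, and breaking the apparent circularity is the crux of the argument.
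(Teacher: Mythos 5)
The paper's own ``proof'' is a one-line citation to Lemma 4.9 of \cite{Yang07} (Yang, \emph{Trans.\ Amer.\ Math.\ Soc.}\ 2007), which in turn follows Li \cite{Li2001}, so the benchmark is really that argument. Your high-level structure (multiply \eqref{eq:EL} by $c_\varepsilon$, obtain an $L^1$ bound on the right-hand side, invoke Stampacchia/Brezis--Strauss type $W^{1,q}$-estimates for the Neumann problem, pass to the weak limit, and bootstrap smoothness away from $p$) is exactly in the spirit of that argument, and you correctly identify the crux: the $L^1$ bound on $h_\varepsilon = \lam_\varepsilon^{-1} c_\varepsilon u_\varepsilon e^{\al_\varepsilon u_\varepsilon^2} - \lam_\varepsilon^{-1} c_\varepsilon \mu_\varepsilon$.

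However, your method for that $L^1$ bound has a genuine gap. First, the geometric split $\Om = (\Om \cap B_\delta(p)) \cup (\Om\setminus B_\delta(p))$ does not close the argument: inside $B_\delta(p)$ the rescaling controls only $B_{R r_\varepsilon}(y_\varepsilon)$ for a \emph{fixed} $R$, and the neck $B_\delta(p)\setminus B_{Rr_\varepsilon}(x_\varepsilon)$ is left uncontrolled, which is exactly the circularity you flag at the end without resolving. Second, on $\Om\setminus B_\delta(p)$ the cut-off argument does bound $e^{\al_\varepsilon u_\varepsilon^2}$ in $L^{q_0}$ and shows $u_\varepsilon\to 0$ there, but the constant piece $\lam_\varepsilon^{-1} c_\varepsilon \mu_\varepsilon$ is \emph{not} controlled by \eqref{eq:upbound}: that estimate only yields $\lam_\varepsilon^{-1}|\mu_\varepsilon|\leqslant c$, and multiplying by $c_\varepsilon\to\infty$ destroys the bound. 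In fact $c_\varepsilon \mu_\varepsilon /\lam_\varepsilon = \frac{c_\varepsilon}{|\Om|\lam_\varepsilon}\int_\Om u_\varepsilon e^{\al_\varepsilon u_\varepsilon^2}\,dx$, which is exactly $|\Om|^{-1}$ times the total mass of the first term of $h_\varepsilon$, so the two issues are one and the same.

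The standard way to break the circularity is a \emph{value-level} truncation, not a geometric one. Fix $A>1$ and split on $\{u_\varepsilon> c_\varepsilon/A\}$ and $\{u_\varepsilon\leqslant c_\varepsilon/A\}$. On the first set, $c_\varepsilon u_\varepsilon\leqslant A u_\varepsilon^2$, so
\[
\frac{c_\varepsilon}{\lam_\varepsilon}\int_{\{u_\varepsilon> c_\varepsilon/A\}} u_\varepsilon e^{\al_\varepsilon u_\varepsilon^2}\,dx \leqslant \frac{A}{\lam_\varepsilon}\int_\Om u_\varepsilon^2 e^{\al_\varepsilon u_\varepsilon^2}\,dx = A.
\]
On the second set, one needs the a priori bound $\limsup_{\varepsilon\to 0} c_\varepsilon^2/\lam_\varepsilon < +\infty$; this follows from Lemma \ref{eq:rel} together with Lemma \ref{lemLim=Sup} and the strict inequality $\sup_{\|u\|_{1,\al}\leqslant 1}\int_\Om e^{2\pi u^2}dx > |\Om|$, which force $\liminf_\varepsilon \lam_\varepsilon/c_\varepsilon^2>0$. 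With that, on $\{u_\varepsilon\leqslant c_\varepsilon/A\}$ one has $e^{\al_\varepsilon u_\varepsilon^2}\leqslant e^{\al_\varepsilon (u_\varepsilon^A)^2}$ with $u_\varepsilon^A=\min\{u_\varepsilon,c_\varepsilon/A\}$, and Lemma \ref{truncation} plus \eqref{eq:ChangYang} give an $L^2$ bound on $e^{\al_\varepsilon (u_\varepsilon^A)^2}$ (after a mean-subtraction); Cauchy--Schwarz and $\|u_\varepsilon\|_2\to 0$ then make this contribution vanish. This value-level splitting is the missing ingredient in your proposal; once it is in place, your rescaling argument identifies the weak-$*$ limit of $\lam_\varepsilon^{-1}c_\varepsilon u_\varepsilon e^{\al_\varepsilon u_\varepsilon^2}$ as $\de_p$ (using Claim \ref{claim4} to center the profile on the boundary), and the rest of your outline --- identification of the limiting equation, recovery of $\pa_\nu\G=0$ and $\int_\Om\G\,dx=0$, and the interior elliptic bootstrap --- goes through as you describe.
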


\begin{proof}
The proof of this proposition is similar to the proof of Lemma 4.9 in \cite{Yang07} with only slight modification is needed; therefore we omit the details.
\end{proof}

For future benefit, it is worth noticing that the Green function $\G$ appearing in \eqref{eq:Green} above takes the form
\begin{equation}\label{eq:GreenExpansion}
\G(x) = -\frac1\pi \log (|x-p|) + A_p + \beta(x),
\end{equation}
where $A_p$ is constant, $\beta \in C^1(\o{\Om})$, and $\beta(x) =O(|x-p|)$.


\section{Capacity estimates}
\label{sec-Capacity}

In this section we use capacity techniques to calculate $\limsup_{\varepsilon \to 0} \lam_\varepsilon  c_\varepsilon ^{-2}$; hence by Lemma \ref{eq:rel} we have an upper bound of
\[
\sup_{u\in \Hcirc (\Om), \|u\|_{1,\alpha} \leqslant 1} \int_{\Omega} e^{2\pi u^2} dx,
\]
under the blow-up assumption, that is $c_\varepsilon \to \infty$ as $\varepsilon \to 0$. The main result of this section is given in Proposition \ref{eq:upperbound} below. We note here that the technique of using capacity estimate applied to this kind of problems was discovered by Li \cite{Li2001} in dealing with the Moser--Trudinger inequality.

Take an isothermal coordinate $(\mathcal U,\phi)$ around the blow-up point $p$ as above. We denote $y_\varepsilon = \phi(x_\varepsilon )$ and define the function $\util _\varepsilon $ as in \eqref{eq:UTitle}. In such a coordinate system, the original flat metric $g= dx_1^2 + dx_2^2$ has the form 
$$g =e^{2f(y)}(dy_1^2 + dy_2^2)$$ 
with $f(0) =0$. Then by a simple change of variables, we have
\[
|\na u_\varepsilon |^2 dx_1 dx_2 = |\na \util _\varepsilon |^2 dy_1 dy_2.
\]
Fix $0 < \delta < 1/2$ and $R >0$, thanks to Claim \ref{claim3}, for sufficiently small $\varepsilon$ we can define
\[
T_\varepsilon (a,b) = \big\{ u \in H^1(\B_\de(y_\varepsilon )\setminus \B_{r_\varepsilon R}(y_\varepsilon )): u = a \text{ on } \pa \B_\de(y_\varepsilon ), \, u = b \text{ on } \pa \B_{R r_\varepsilon }(y_\varepsilon ) \big\}.
\]
Denote
\[
\left\{
\begin{split}
s_\varepsilon =& \sup_{\pa \B_\de(y_\varepsilon )} \util _\varepsilon,\\
i_\varepsilon =& \inf_{\pa \B_{Rr_\varepsilon }(y_\varepsilon )} \util _\varepsilon .
\end{split}
\right.
\] 
Recall the definition of $\varphi_\varepsilon$ in \eqref{eq:ScaledFunctions} and the convergence $\vphi_\varepsilon \to \vphi$ in $C^1(\o{\B_{R/4}(0)})$ for any $R >0$ where $\vphi$ is given in \eqref{eq:vphian}. By this convergence result, we know that
\[
\util _\varepsilon (y_\varepsilon + r_\varepsilon \cdot) \to c_\varepsilon + \frac 1{c_\varepsilon} \Big(  -\frac1{2\pi} \log \Big(1 + \frac{\pi}2 | \cdot |^2 \Big) \Big)
\]
in $C^1(\o{\B_{R/4}(0)})$. Therefore, on the boundary $\pa \B_{R r_\varepsilon }(y_\varepsilon )$ shrinking to zero we obtain
\begin{equation}\label{eq:ExpansionI}
i_\varepsilon = c_\varepsilon + \frac1{c_\varepsilon } \Big(-\frac1{2\pi} \log \Big(1 + \frac{\pi R^2}2 \Big) + o_\varepsilon (R)+ o_\varepsilon (1) \Big)
\end{equation}
while on the fixed boundary $\pa \B_\de(y_\varepsilon )$ far from zero we deduce from Proposition \ref{Green} that
\begin{equation}\label{eq:ExpansionS}
s_\varepsilon = \frac1{c_\varepsilon } \Big(-\frac1\pi \log \delta + A_p + o_\de(1) + o_\varepsilon (1)\Big),
\end{equation}
where the errors $o_\varepsilon (1)\to 0$ as $\varepsilon \to 0$, $o_\de(1)\to 0$ as $\delta \to 0$, and $o_\varepsilon (R) \to 0$ as $\varepsilon \to 0$ for any fixed $R$. An immediate consequence of these expansions is that for $\varepsilon $ sufficient small, we get $s_\varepsilon \leqslant i_\varepsilon $. A simple variational technique implies that the value
\[
\inf_{\util \in T_\varepsilon (s_\varepsilon ,i_\varepsilon )} \int_{\B_\de(y_\varepsilon )\setminus \B_{r_\varepsilon R}(y_\varepsilon )} |\na \util|^2 dy
\]
is attained by a function $\htil \in H^1( \B_\de(y_\varepsilon ) \setminus \o{\B_{Rr_\varepsilon }(y_\varepsilon )})$ satisfying
\[
\left\{
\begin{split}
\Delta \htil  & = 0 \qquad \mbox{ in } \B_\de(y_\varepsilon ) \setminus \o{\B_{Rr_\varepsilon }(y_\varepsilon )},\\
\htil  &= s_\varepsilon \qquad \mbox{ on } \pa \B_\de(y_\varepsilon ),\\
\htil  &= i_\varepsilon \qquad \mbox{ on } \pa \B_{Rr_\varepsilon }(y_\varepsilon ).
\end{split}
\right.
\]
In fact, it is not hard to verify that the function $h$ is given as follows
\[
\htil  (y) = \frac{s_\varepsilon (\log |y-y_\varepsilon | - \log (R r_\varepsilon )) + i_\varepsilon (\log \delta -\log |y-y_\varepsilon |)}{\log \delta -\log (R r_\varepsilon )},
\]
and hence by a direct calculation
\begin{equation}\label{eq:energyh}
\int_{\B_\de(y_\varepsilon )\setminus \B_{Rr_\varepsilon }(y_\varepsilon )}|\na \htil |^2 dy = \frac{2\pi(s_\varepsilon -i_\varepsilon )^2}{\log \delta -\log (R r_\varepsilon )}.
\end{equation}
We now estimate the left hand side of \eqref{eq:energyh} from above. Set 
$$\util _\varepsilon ' = \max \big\{s_\varepsilon , \min\{\util _\varepsilon , i_\varepsilon \} \big\}.
$$
Clearly $\util _\varepsilon '\in T_\varepsilon (s_\varepsilon ,i_\varepsilon )$; therefore we have
\begin{equation}\label{eq:energyh-1}
\begin{split}
\int_{\B_\de(y_\varepsilon )\setminus \B_{r_\varepsilon R}(y_\varepsilon )} |\na \htil |^2 dy &\leqslant \int_{\B_\de(y_\varepsilon )\setminus \B_{r_\varepsilon R}(y_\varepsilon )} |\na \util _\varepsilon '|^2 dy\\
&\leqslant \int_{\B_\de(y_\varepsilon )\setminus \B_{r_\varepsilon R}(y_\varepsilon )} |\na \util _\varepsilon |^2 dy\\
&=\int_{\B_\de(y_\varepsilon )} |\na \util _\varepsilon |^2 dy - \int_{\B_{Rr_\varepsilon }(y_\varepsilon )} |\na \util _\varepsilon |^2 dy.
\end{split}
\end{equation}
In view of Proposition \ref{Green}, we can estimate the first term on the far right hand side of \eqref{eq:energyh-1} as follows
\begin{equation}\label{eq:energyh-2}
\begin{split}
\int_{\B_\de(y_\varepsilon )} |\na \util _\varepsilon |^2 dy &= \int_{\B_\de(y_\varepsilon ) \cap \R^2_+} |\na \util _\varepsilon |^2 dy + \int_{\B_\de(y_\varepsilon ) \cap \R^2_-} |\na \util _\varepsilon |^2 dy\\
&\leqslant 2\int_{\B_\de(y_\varepsilon ) \cap \R^2_+} |\na \util _\varepsilon |^2 dy\\
&= 2\int_{\phi^{-1}(\B_\de(y_\varepsilon ))} |\na u_\varepsilon |^2 dx\\
&=2 + 2\al \|u_\varepsilon \|_2^2 -2\int_{\Om \setminus \phi^{-1}(\B_\de(y_\varepsilon ))} |\na u_\varepsilon |^2 dx\\
&=2 + \frac{2}{c_\varepsilon ^2} \Big(\alpha \|\G\|_2^2 -\int_{\Om \setminus \phi^{-1}(\B_\de(0))}|\na \G|^2 dx + o_\varepsilon (1) + o_\varepsilon (\de)\Big),
\end{split}
\end{equation}
where $o_\varepsilon (1) \to 0$ as $\varepsilon \to 0$ and $o_\varepsilon (\de) \to 0$ as $\varepsilon \to 0$ and $\de$ is fixed. Using \eqref{eq:Green}, \eqref{eq:GreenExpansion}, and integration by parts, we get
\begin{equation}\label{eq:energyh-3}
\int_{\Om \setminus \phi^{-1}(\B_\de(0))}|\na \G|^2 dx = -\frac1\pi \log \delta +A_p+ \al \|\G\|_2^2 + o_\varepsilon (\de) + o_\varepsilon (1) + o_\de(1).
\end{equation}
Recall that the scaled function $\vphi_\varepsilon \to \vphi$ in $C^1_{\rm loc}(\R^2)$ with $\vphi$ a standard solution given by \eqref{eq:vphian}. Whence
\begin{equation}\label{eq:energyh-3}
\begin{split}
\int_{\B_{Rr_\varepsilon }(y_\varepsilon )} |\na \util _\varepsilon |^2 dy &= \frac1{c_\varepsilon ^2} \int_{\B_R(0)} |\na \vphi_\varepsilon |^2 dy\\
&=\frac{1}{c_\varepsilon ^2} \Big( \int_{\B_R(0)} |\na \vphi|^2 dy + o_\varepsilon (R) \Big)\\
&= \frac1{c_\varepsilon ^2} \Big( \frac1{\pi} \log \lt(1 + \frac\pi 2 R^2\rt) -\frac1{\pi} +o_R(1) + o_\varepsilon (R) \Big),
\end{split}
\end{equation}
thanks to \eqref{eq:ScaledFunctions}. Consequently, combining \eqref{eq:energyh-1}--\eqref{eq:energyh-3} gives
\begin{equation}\label{eq:energyh-Upper}
\begin{split}
\int_{\B_\de(y_\varepsilon )\setminus \B_{r_\varepsilon R}(y_\varepsilon )} |\na \htil |^2 dy \leqslant 2+ \frac{2}{c_\varepsilon ^2} 
\Bigg(
\begin{split}
&\frac1\pi \log \delta -A_p -\frac1{2\pi}\log \lt(1 + \frac\pi 2 R^2\rt) + \frac1{2\pi}\\
&+ o_\varepsilon (\de) + o_\varepsilon (1) + o_\de(1)+ o_R(1) + o_\varepsilon (R)
\end{split}
\Bigg).
\end{split}
\end{equation}
We now go back to \eqref{eq:energyh} to estimate the right hand side of \eqref{eq:energyh}. From the definition of $r_\varepsilon $ right before Claim \ref{claim4}, we get that
\[
\frac{\log \delta -\log (Rr_\varepsilon )}{2\pi} = \frac{\log \delta -\log R}{2\pi}-\frac1{4\pi} \log \frac{\lam_\varepsilon }{c_\varepsilon ^2} + \frac{\al_\varepsilon c_\varepsilon ^2}{4\pi}.
\]
From the expression of $i_\varepsilon $ in \eqref{eq:ExpansionI} and $s_\varepsilon $ in \eqref{eq:ExpansionS}, we have
\[
(i_\varepsilon -s_\varepsilon )^2 = c_\varepsilon ^2 + \frac2\pi \log \delta -2A_p -\frac1\pi \log \lt(1+ \frac\pi 2 R^2 \rt) + o_\varepsilon (R) + o_\varepsilon (1) + o_\de(1).
\]
Thus, we obtain from the last two estimates together with \eqref{eq:energyh} and \eqref{eq:energyh-Upper} the following
\[
\begin{split}
c_\varepsilon ^2 + \frac2\pi \log \de& -2A_p -\frac1\pi \log \lt(1+ \frac\pi 2 R^2 \rt) + o_\varepsilon (R) + o_\varepsilon (1) + o_\de(1)\\
\leqslant &\lt(\frac{\log \delta -\log R}{2\pi}-\frac1{4\pi} \log \frac{\lam_\varepsilon }{c_\varepsilon ^2} + \frac{\al_\varepsilon c_\varepsilon ^2}{4\pi}\rt) \\
&\times \Bigg[2+ \frac{2}{c_\varepsilon ^2} 
\Bigg(
\begin{split}
& \frac1\pi \log \delta -A_p -\frac1{2\pi}\log \lt(1 + \frac\pi 2 R^2\rt) + \frac1{2\pi}\\
&+ o_\varepsilon (\de) + o_\varepsilon (1) + o_\de(1)+ o_R(1) + o_\varepsilon (R)
\end{split}
\Bigg)
\Bigg]\\
=& \frac{\log \delta -\log R}{\pi}-\frac1{2\pi}(1+o_\varepsilon (1) + o_\varepsilon (\de) + o_\varepsilon (R)) \log \frac{\lam_\varepsilon }{c_\varepsilon ^2}\\
& + \frac{\al_\varepsilon c_\varepsilon ^2}{2\pi} + \frac{\al_\varepsilon }{2\pi} \frac1\pi \log \delta -\frac{\al_\varepsilon }{2\pi} A_p -\frac{\al_\varepsilon }{2\pi} \frac1{2\pi}\log \lt(1 + \frac\pi 2 R^2\rt) + \frac{\al_\varepsilon }{4\pi^2} \\
&+ o_\varepsilon (\de) + o_\varepsilon (1) + o_\de(1)+ o_R(1) + o_\varepsilon (R).
\end{split}
\]
Thus, after a tedious computation, we arrive at
\begin{align*}
\frac1{2\pi}(1+o_\varepsilon (1) +& o_\varepsilon (\de) + o_\varepsilon (R)) \log \frac{\lam_\varepsilon }{c_\varepsilon ^2} \\
&\leqslant \frac1{2\pi} + \frac1{2\pi}\log \frac\pi2+ A_p+ o_\varepsilon (\de) + o_\varepsilon (1) + o_\de(1)+ o_R(1) + o_\varepsilon (R).
\end{align*}
We now let $\varepsilon \to 0$ and then let $\de\to 0$ and $R\to \infty$ to get from the preceding estimate the following
\[
\limsup_{\varepsilon \to 0} \Big( \frac{\lam_\varepsilon }{c_\varepsilon ^2} \Big) \leqslant \frac\pi 2 e^{1 + 2\pi A_p}.
\]
Combining the preceding estimate, Lemma \ref{eq:rel}, and Lemma \ref{lemLim=Sup} in the previous section, we obtain the following key estimate.

\begin{proposition}\label{eq:upperbound}
As $\varepsilon \to 0$, if $c_\varepsilon \to \infty$, then the inequality
\[
\sup_{u\in \Hcirc (\Om), \|u\|_{1,\alpha} \leqslant 1} \int_{\Omega} e^{2\pi u^2} dx \leqslant |\Om| + \frac\pi 2 e^{1 + 2\pi A_p}
\]
holds.
\end{proposition}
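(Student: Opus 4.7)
The plan is to observe that the statement is, at this stage, essentially a packaging of three ingredients that are already in place: the capacity estimate
\[
\limsup_{\varepsilon \to 0} \frac{\lam_\varepsilon}{c_\varepsilon^2} \leqslant \frac{\pi}{2} e^{1+2\pi A_p}
\]
obtained from the harmonic comparison on the annulus $\B_\de(y_\varepsilon)\setminus \B_{Rr_\varepsilon}(y_\varepsilon)$; Lemma \ref{eq:rel}, which bounds $\limsup_{\varepsilon\to 0}\int_\Om e^{\al_\varepsilon u_\varepsilon^2}\,dx$ by $|\Om|+\limsup_{\varepsilon\to 0}\lam_\varepsilon/c_\varepsilon^2$; and Lemma \ref{lemLim=Sup}, which identifies $\limsup_{\varepsilon\to 0}\int_\Om e^{\al_\varepsilon u_\varepsilon^2}\,dx$ with the desired supremum. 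So the proof is really a one-line chaining argument.

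Concretely, first I would invoke Lemma \ref{lemLim=Sup} to rewrite
\[
\sup_{u\in\Hcirc(\Om),\,\|u\|_{1,\al}\leqslant 1}\int_\Om e^{2\pi u^2}\,dx \;=\; \limsup_{\varepsilon\to 0}\int_\Om e^{\al_\varepsilon u_\varepsilon^2}\,dx.
\]
Next, under the blow-up assumption $c_\varepsilon\to\infty$, apply Lemma \ref{eq:rel} to dominate the right hand side by $|\Om|+\limsup_{\varepsilon\to 0}\lam_\varepsilon/c_\varepsilon^2$. Finally, insert the capacity estimate just derived to conclude
\[
\sup_{u\in\Hcirc(\Om),\,\|u\|_{1,\al}\leqslant 1}\int_\Om e^{2\pi u^2}\,dx \;\leqslant\; |\Om|+\frac{\pi}{2}e^{1+2\pi A_p}.
\]

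The genuinely hard step has already been carried out above the statement, namely the capacity argument: comparing the Dirichlet energy of the harmonic extension $\htil$ on the annulus with that of the truncation $\util_\varepsilon'$, using the decomposition of $\int_{\B_\de(y_\varepsilon)}|\na\util_\varepsilon|^2\,dy$ in terms of $\|u_\varepsilon\|_{1,\al}^2=1$ and the Green's function energy outside the ball (with the logarithmic contribution $-\tfrac{1}{\pi}\log\de+A_p+\al\|\G\|_2^2$ coming from \eqref{eq:GreenExpansion}), and then combining with the precise asymptotics of $s_\varepsilon$ and $i_\varepsilon$ in \eqref{eq:ExpansionS} and \eqref{eq:ExpansionI}. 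The subtlety there was the presence of boundary blow-up, which forced the doubling \eqref{eq:UTitle} across $\pa\R^2_+$ and the factor of $2$ in the inequality $\int_{\B_\de(y_\varepsilon)}|\na\util_\varepsilon|^2\leqslant 2\int_{\B_\de(y_\varepsilon)\cap\R^2_+}|\na\util_\varepsilon|^2$; once those asymptotic expansions are in hand, the reduction above is immediate and yields the stated bound.
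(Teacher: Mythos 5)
Your proposal is correct and follows exactly the paper's own route: the capacity estimate $\limsup_{\varepsilon\to 0}\lam_\varepsilon/c_\varepsilon^2 \leqslant \tfrac{\pi}{2}e^{1+2\pi A_p}$ derived just above the statement, chained with Lemma~\ref{eq:rel} and Lemma~\ref{lemLim=Sup}, is precisely how the paper obtains Proposition~\ref{eq:upperbound}. You have also correctly located the real substance of the argument in the capacity comparison on the annulus and the boundary doubling, which the paper likewise treats as the preceding work to be combined.
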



\section{Proofs of main theorems}
\label{sec-Proofs}

\subsection{Proofs of Theorems \ref{Main1} and \ref{Main2}}

This part is devoted to proofs of Theorems \ref{Main1} and \ref{Main2}. First we prove Theorem \ref{Main1}. If $c_\varepsilon $ is bounded, by applying standard elliptic theory to \eqref{eq:EL}, we see that $u_\varepsilon \to u^*$ in $C^2(\o \Om)$ which implies Theorems \ref{Main1}. If $c_\varepsilon \to \infty$ as $\varepsilon \to 0$, then Theorem \ref{Main1} follows from Proposition \ref{eq:upperbound}.

Next we prove Theorem \ref{Main2}. To this end, our aim is to construct a sequence $\phi_\varepsilon \in \Hcirc (\Om)$ such that $\|\na \phi_\varepsilon \|_{1,\al} =1$ and
\begin{equation}\label{eq:suff}
\int_\Om e^{2\pi \phi_\varepsilon ^2} dx > |\Om| + \frac\pi 2 e^{1 +2\pi A_p},
\end{equation}
for $\ep >0$ small enough. 

If, for a moment, this construction is possible, then in view of Proposition \ref{eq:upperbound} above we get the boundedness of $c_\varepsilon$. From this by considering the maximizing sequence $\{u_\varepsilon\}$ for $\{c_\varepsilon\}$ and applying standard elliptic theory to \eqref{eq:EL}, we see that $u_\varepsilon \to u^*$ in $C^2(\o \Om)$ for some function $u^*$. From this, it is routine to realize that $u^*$ is the optimal function we are looking form. The proof of Theorems \ref{Main2} then follows. Thus, in the rest of this section, we aim to construct a sequence $\{ \phi_\varepsilon \} \subset H^1(\Om)$ having all properties mentioned earlier. For clarity, we divide our construction into several steps.

\medskip\noindent\textbf{Step 1}. Let $p \in \partial \Omega$ be the blow-up point. Again we take an isothermal coordinates around $p$ represented by $\phi$. Starting from a sufficiently small $\varepsilon \in (0,1)$ chosen in such a way that we can identify, via $\phi$, $\Omega \cap B_{2R\varepsilon}(p)$ as a half-ball in $\Rset_+^2$ where $R = -\log \varepsilon $. (Keep in mind that our choice for $R$ guarantees that $R \nearrow +\infty$ and that $R\varepsilon \searrow 0$ as $\varepsilon \searrow 0$.) We consider two sequences of functions $\{w_\varepsilon \}_\varepsilon$ and $\{\phi_\varepsilon \}_\varepsilon$ defined by
\[
w_\varepsilon = \widetilde w_\varepsilon \circ \phi , \quad \phi_\varepsilon = w_\varepsilon - |\Om|^{-1}\int_\Om w_\varepsilon dx,
\]
where $\widetilde w_\varepsilon$ is a radially symmetric function centered at $\phi(p) \equiv 0$ given by
\[
\widetilde w_\varepsilon (r) =
\left\{
\begin{split}
c + \frac1c\lt(-\frac1{2\pi} \log \Big(1 + \frac\pi2 \frac{r^2}{\varepsilon ^2}\Big) + A\rt) \quad & \mbox{ if } 0 < r < R\varepsilon,\\
\frac{\G -\eta \beta}c \quad &\mbox{ if } R\varepsilon \leqslant r < 2R\varepsilon,\\
\frac \G c \quad & \mbox{ if } r \geqslant 2R\varepsilon,
\end{split}
\right.
\]
where $\eta$ is cut-off function in $B_{2R\varepsilon }(p)$ satisfying $\eta \equiv 1$ in $B_{R\varepsilon }(p)$ and $\|\na \eta\|_\infty = O((R\varepsilon )^{-1})$, and $c,A$ are constants to be determined later. (Here, in order to avoid introducing further notations, the Green function $\G$ is understood both in the orginial coordinates with center at $p$ or after making use of the isothermal coordinates with center at zero.) In order for $w_\varepsilon$ to belong to $H^1(\Om)$, we choose $A$ in such a way that $w_\varepsilon$ is continuous across $\partial B_{R\varepsilon}(p)$. This forces
\[\begin{split}
c + \frac1c\lt(-\frac1{2\pi} \log \lt(1 + \frac\pi2 R^2\rt) + A \rt) = & \lim_{r\nearrow R\varepsilon } w_\varepsilon (r) \\
= &\lim_{r\searrow R\varepsilon } w_\varepsilon (r) = \frac1c\lt(-\frac1\pi \log (R\varepsilon ) + A_p\rt),
\end{split}\]
which gives
\begin{align}\label{eq:A}
A &=-c^2 + \frac1{2\pi} \log \lt(1 + \frac\pi2 R^2\rt) -\frac1\pi \log (R\varepsilon ) + A_p\notag\\
&= -c^2 + \frac1{2\pi} \log\frac\pi2 -\frac1\pi \log\ep + A_p + O(R^{-2}).
\end{align}
From this we obtain 
\begin{equation}\label{eq:WInInnerBall}
c w_\varepsilon (r) \big|_{B_{ R\varepsilon}(p) \setminus\{p\} } =-\frac1{2\pi} \log \lt(1 + \frac\pi2 \frac{r^2}{\varepsilon ^2}\rt) + \frac1{2\pi} \log \lt(1 + \frac\pi2 R^2\rt) -\frac1\pi \log (R\varepsilon ) + A_p .
\end{equation}
Clearly $\int_\Omega \phi_\varepsilon dx =0$; hence $\phi_\varepsilon \in \Hcirc (\Omega)$. In the next step, we carefully select $c$ in such a way that $\|\na \phi_\varepsilon \|_{1,\al} =1$. Then in the last step, we verify \eqref{eq:suff}.

\medskip\noindent\textbf{Step 2}. In this step, to determine $c$, we first compute the Dirichlet integral $\int_\Om |\na w_\varepsilon |^2 dx$. Thanks to \eqref{eq:apdIntegralNablaPhi}, we have
\begin{equation}\label{eq:IntegraDeltaW}
\int_\Om |\na w_\varepsilon |^2 dx = \frac1{c^2} 
\left(\alpha\int_{\Om } \G^2 dx -\frac {\log \varepsilon}{\pi} + \frac1{2\pi}\log \frac\pi2 +A_p -\frac1{2\pi} + O(R^{-2})\right).
\end{equation}
We next compute $\int_\Om w_\varepsilon dx$ and $\int_\Om w_\varepsilon ^2 dx$. Using the main estimate in \eqref{eq:apdIntegralPhi}, we have
\[
\begin{split}
\int_\Om w_\varepsilon dx=  \frac1c O((R\varepsilon )^2 \log (R\varepsilon) ) .
\end{split}
\]
In particular, 
\begin{equation}\label{eq:meanwep}
\Big( \int_\Om w_\varepsilon dx \Big)^2= \frac1{c^2}O(R^{-2}).
\end{equation}
Similarly, using the main estimate \eqref{eq:apdIntegralPhi^2} in Appendix \ref{apd-IntegralPhi^2} we have
\begin{equation}\label{eq:IntegraW^2}
\begin{split}
\int_{\Om} w_\varepsilon ^2 dx =& \frac 1{c^2} \left(\int_{\Om } \G^2 dx + O((R\varepsilon )^2(\log (R\varepsilon ))^2)\right).
\end{split}
\end{equation}
Thus, combining \eqref{eq:IntegraDeltaW}, \eqref{eq:meanwep}, and \eqref{eq:IntegraW^2} we get
\[\begin{split}
\|\phi_\varepsilon \|_{1,\alpha}^2 =&\int_\Om |\na w_\varepsilon |^2 dx  -\alpha  \int_\Om w_\varepsilon ^2 dx + \alpha \frac1{|\Omega|} \Big( \int_\Om w_\varepsilon  dx \Big)^2\\
= &\frac1{c^2} \Big( -\frac {\log \varepsilon}\pi + \frac1{2\pi}\log \frac\pi2+ A_p  -\frac1{2\pi} + O(R\varepsilon \log (R\varepsilon )) + O(R^{-2})          \Big)\\
= &\frac1{c^2} \Big( -\frac {\log \varepsilon}\pi + \frac1{2\pi}\log \frac\pi2+ A_p  -\frac1{2\pi} + O\Big( \frac1{(\log \varepsilon )^2} \Big)\Big),
\end{split}\]
here we have already used $R = -\log \varepsilon $. Therefore, for $\varepsilon $ sufficient small, we can choose $c$ in such a way that $\|\phi_\varepsilon \|_{1,\al} =1$. Indeed, a direct computation leads us to
\begin{equation}\label{eq:cvalue}
c^2 = -\frac{\log \varepsilon}{\pi} + A_p + \frac1{2\pi}\log \frac\pi2 -\frac1{2\pi} + O\lt(\frac1{(\log \varepsilon )^2}\rt).
\end{equation}
In particular, it follows from \eqref{eq:A} that
\begin{equation}\label{eq:NewA}
A = \frac1{2\pi} + O(R\varepsilon \log (R\varepsilon )) + O(R^{-2}) .
\end{equation}
By now, we know that $\phi_\varepsilon \in \Hcirc (\Omega)$ with $\|\na \phi_\varepsilon \|_{1,\al} =1$. Then in the last step, we shall prove that \eqref{eq:suff} actually holds provided $\varepsilon$ is small.

\medskip\noindent\textbf{Step 3}. We next compute $\int_\Om e^{2\pi \phi_\varepsilon ^2} dx$. On the region $\Om\setminus B_{R\varepsilon }(p)$ we apply the elementary inequality $e^x \geqslant 1+x$ to get
\begin{align*}
\int_{\Om\setminus B_{R\varepsilon }(p)} e^{2\pi \phi_\varepsilon ^2} dx &\geqslant \int_{\Om\setminus B_{R\varepsilon }(p)}(1 + 2\pi \phi_\varepsilon ^2) dx\\
&= |\Om\setminus B_{R\varepsilon }| + \frac{2\pi}{c^2}\lt(\int_{\Om\setminus B_{R\varepsilon }(p)} \G ^2 dx + O(R^{-2})\rt)\\
&=|\Om| + \frac{2\pi}{c^2}\lt(\int_{\Om} \G ^2 dx + O(R^{-2})\rt) + O((R\varepsilon)^2)\\
&=|\Om| + \frac{2\pi}{c^2}\lt(\int_{\Om} \G ^2 dx + O(R^{-2})\rt),
\end{align*}
since $c^2 = O(R)$ by \eqref{eq:cvalue}. On $\Om \cap B_{R\varepsilon }(p)$, we use the formula for $w_\varepsilon$ to obtain
\begin{align*}
\phi_\varepsilon ^2 = & w_\varepsilon^2 + \Big(|\Omega|^{-1} \int_\Omega w_\varepsilon dx \Big)^2 - \frac 2{|\Omega|} w_\varepsilon \int_\Omega w_\varepsilon dx\\
= & c^2 + 2\Big(-\frac1{2\pi} \log \Big(1 + \frac\pi2 \frac{r^2}{\varepsilon ^2} \Big) + A\Big) \\
& + \Big(-\frac1{2\pi} \log \Big(1 + \frac\pi2 \frac{r^2}{\varepsilon ^2} \Big) + A\Big)^2 + \Big(|\Omega|^{-1} \int_\Omega w_\varepsilon dx \Big)^2 - \frac 2{|\Omega|} w_\varepsilon \int_\Omega w_\varepsilon dx\\
\geqslant & c^2 + 2\Big(-\frac1{2\pi} \log \Big(1 + \frac\pi2 \frac{r^2}{\varepsilon ^2} \Big) + A\Big) - \frac{2w_\varepsilon}{|\Omega|} \int_\Omega w_\varepsilon dx \\
= & c^2 + 2\Big(-\frac1{2\pi} \log \Big(1 + \frac\pi2 \frac{r^2}{\varepsilon ^2} \Big) + A\Big) + \frac1{c^2}O(R^{-2}) .
\end{align*}
Hence, combining the preceding computation with \eqref{eq:NewA} gives
\begin{align*}
\phi_\varepsilon ^2  \geqslant &-\frac1{\pi}\log \varepsilon + A_p + \frac1{2\pi}\log \frac\pi2 +\frac1{2\pi} -\frac1{\pi} \log \Big(1 + \frac\pi2 \frac{r^2}{\varepsilon ^2} \Big) + \frac1{c^2} O(R^{-2})
\end{align*}
Consequently,
\begin{align*}
2\pi \phi_\varepsilon ^2  \geqslant &- 2\log \varepsilon  + \log \frac\pi2+ 2 \pi A_p +1 -2 \log \Big(1 + \frac\pi2 \frac{r^2}{\varepsilon ^2} \Big) + \frac1{c^2} O(R^{-2}) .
\end{align*}
Using this estimate, we can integrate $\exp (2\pi\phi_\varepsilon ^2) $ over $\Om\cap B_{R\varepsilon }(p)$ to get
\begin{align*}
\int_{\Om\cap B_{R\varepsilon }(p)} e^{2\pi\phi_\varepsilon ^2} dx \geqslant &\frac{\pi}2 e^{1+2\pi A_p} \exp \big( \frac1{c^2} O(R^{-2}) \big)\varepsilon ^{-2} \int_{\Om\cap B_{R\varepsilon }(p)} \Big(1 + \frac\pi2 \frac{r^2}{\varepsilon ^2} \Big)^{-2} dx\\
= &\frac{\pi}2 e^{1+2\pi A_p} \exp \big(\frac1{c^2} O(R^{-2}) \big) \int_{B_R(p) \cap \varepsilon^{-1}(\Om-p) }\Big(1 + \frac\pi2 r^2\Big)^{-2} dx  \\
=& \frac{\pi}2 e^{1+2\pi A_p} \Big( 1 + O(R^{-2}) \Big).
\end{align*}
This combined with the estimate for $\int_{\Om\setminus B_{R\varepsilon }(p)} \exp \big( 2\pi \phi_\varepsilon ^2 \big) dx$ obtained earlier gives
\begin{align*}
\int_\Om e^{2\pi \phi_\varepsilon ^2} dx &\geqslant |\Om| + \frac{\pi}2 e^{1+2\pi A_p} +  \frac{2\pi}{c^2}\lt(\int_{\Om} \G ^2 dx  + c^2 O(R^{-2}) + O(R^{-2})\rt)\\
&=|\Om| + \frac{\pi}2 e^{1+2\pi A_p} +  \frac{2\pi}{c^2}\lt(\int_{\Om} \G ^2 dx  + O(R^{-1})\rt)
\end{align*}
Recall that $R = -\log \varepsilon$. Thus, \eqref{eq:suff} holds provided $\varepsilon >0$ is small enough. This finishes our proof.

\subsection{Proof of Corollary \ref{Main3}}

In this part, we prove \eqref{eq:Mainresult3}. Indeed, for each function $0 \not \equiv u \in H^1(\Omega)$, we set
\[
\mathop u\limits^ \circ =u - |\Omega|^{-1} \int_M u dx
\]
and let
\[
v = \mathop u\limits^ \circ (\|\nabla \mathop u\limits^ \circ \|_2^2 - \alpha \|\mathop u\limits^ \circ \|_2^2)^{-1/2} .
\]
Since $\alpha \in [0, \lambda^\NN (\Omega))$, it is not hard to see that $v$ is well-defined. Furthermore, the function $v$ satisfies $\int_\Omega v dx = 0$ and $\|v\|_{1, \alpha} =1$. Making use of \eqref{eq:Mainresult1}, there exists some uniform constant $C>0$ such that
\[
\int_\Omega e^{2\pi v^2} dx \leqslant C < +\infty.
\]
Notice that
\[
\Big( \sqrt{2\pi} v - \frac{(\|\nabla \mathop u\limits^ \circ \|_2^2 - \alpha \|\mathop u\limits^ \circ \|_2^2)^{1/2}}{2\sqrt{2\pi}}\Big )^2 \geqslant 0,
\]
which implies
\[
\mathop u\limits^ \circ - \frac 1{8\pi} (\|\nabla \mathop u\limits^ \circ \|_2^2 - \alpha \|\mathop u\limits^ \circ \|_2^2) \leqslant 2\pi v^2.
\]
Hence
\[
\int_\Omega \exp (\mathop u\limits^ \circ ) dx \leqslant C \exp \Big( \frac 1{8\pi} \int_\Omega |\nabla u|^2 dx - \frac \alpha {8\pi} \int_\Omega u^2 dx + \frac \alpha {8\pi}|\Omega|^{-1} \Big(\int_\Omega u dx \Big)^2 \Big).
\]
(Here we notice that $\|\mathop u\limits^ \circ \|_2^2 = \|u\|_2^2 - |\Omega|^{-1} \left(\int_\Omega u dx \right)^2$.) From this we obtain
\[
\int_\Om e^ u dx \leqslant C \exp \Big( \frac 1{8\pi} \int_\Omega |\nabla u|^2 dx - \frac \alpha {8\pi} \int_\Omega u^2 dx + \frac 1{|\Omega|} \int_\Omega u dx \Big)
\]
as claimed in \eqref{eq:Mainresult3}. 

\subsection{An application to a boundary value problem for mean field-type equations}
\label{subsec-Application}

In the last part of the paper, we illustrate how to use \eqref{eq:Mainresult3} by an example. Inspired by \cite[Eq. (1.2)]{CYzero}, let us consider the following Neumann boundary condition for a linear pertubation of mean field equations on domains
\begin{equation}\label{eq:Example}
\left\{
\begin{split}
-\Delta u - \alpha u &= \rho \left( \frac{ f e^u}{\int_\Omega f e^u dx} - \frac 1{ |\Omega|} \right) \qquad \text{ in } \Omega,\\
\partial_\nu u &= 0 \qquad \text{ on } \partial \Omega,
\end{split}
\right.
\end{equation}
where $\Omega \subset \R^2$ is a smooth, bounded domain with smooth boundary $\partial \Omega$. Here $\alpha$ and $\rho$ are non-negative parameters to be specified later, $f$ is a positive function, and by $\partial_\nu u$ we mean the outward normal derivative of $u$. 

We note from \eqref{eq:Example} that because the equation is no longer invariant under translation due to the linear pertubation, we can freely impose the Neumann boundary condition. To determine $\alpha$, as always, let us denote by $\lambda^\NN (\Omega)$ the first non-zero Neumann eigenvalue of $-\Delta$. 
Then we assume that $\alpha \in [0, \lambda^\NN (\Omega))$. For the parameter $\rho$, insprired by the analysis of mean field equations, we aslo focus on the interesting case $\rho >0$.

\begin{theorem}\label{thmApplication}
Suppose that $\Omega \subset \R^2$ is a smooth, bounded domain with smooth boundary $\partial \Omega$ and that $f$ is a positive function on $\Omega$. Then, for any $\alpha \in [0, \lambda^\NN (\Omega))$ and $\rho \in (0, 4\pi)$, there exists a non-trivial solution of \eqref{eq:Example}.
\end{theorem}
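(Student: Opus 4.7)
The strategy is a direct minimization. Observe that \eqref{eq:Example} is, up to a Lagrange multiplier, the Euler--Lagrange equation of the functional
\[
J(u) = \frac{1}{2}\int_\Om |\na u|^2 dx - \frac{\al}{2}\int_\Om u^2 dx + \frac{\rho}{|\Om|}\int_\Om u\,dx - \rho \log \int_\Om f e^u dx
\]
restricted to $\Hcirc (\Om)$. Integrating \eqref{eq:Example} over $\Om$ and using the Neumann condition forces $\al \int_\Om u\,dx = 0$, so any solution has zero mean when $\al > 0$; for $\al = 0$ we fix the additive gauge by imposing $\int_\Om u\,dx = 0$. Conversely, a critical point $u^\star$ of $J|_{\Hcirc (\Om)}$ satisfies an Euler--Lagrange identity with a Lagrange multiplier $\lam$ attached to the constraint; testing against the constant $1$ and using the Neumann condition reveals $\lam = 0$, recovering \eqref{eq:Example}.

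The lower bound on $J$ comes from Corollary \ref{Main3}. For $u \in \Hcirc (\Om)$ the $\frac{1}{|\Om|}\int u\,dx$ term in \eqref{eq:Mainresult3} drops out, leaving
\[
\log \int_\Om e^u dx \leqslant \frac{1}{8\pi}\|u\|_{1,\al}^2 + C,
\]
and since $f$ is positive and continuous on $\overline\Om$, the same estimate controls $\log \int_\Om fe^u dx$ up to $\log\|f\|_\infty$. It follows that
\[
J(u) \geqslant \Bigl(\frac{1}{2} - \frac{\rho}{8\pi}\Bigr)\|u\|_{1,\al}^2 - C',
\]
and the hypothesis $\rho < 4\pi$ makes the leading coefficient strictly positive. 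Poincar\'e's inequality together with $\al < \lam^\NN (\Om)$ makes $\|\cdot\|_{1,\al}$ equivalent to the $H^1$-norm on $\Hcirc (\Om)$, so $J$ is coercive and bounded below.

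Take a minimizing sequence $\{u_n\}\subset \Hcirc (\Om)$ and extract a weakly convergent subsequence $u_n \rightharpoonup u^\star$ in $H^1(\Om)$; compact embedding yields strong $L^q$-convergence for every $q < \infty$ and a.e.\ convergence, while $u^\star \in \Hcirc (\Om)$ by continuity of the mean. The quadratic part of $J$ is weakly lower semicontinuous. For the exponential term, set $M = \sup_n \|u_n\|_{1,\al}$ and apply Theorem \ref{Main1} to $v_n = u_n/M$ to get $\int_\Om e^{2\pi v_n^2}dx \leqslant C_0$; combined with the elementary inequality
\[
q|u_n| \leqslant \frac{2\pi}{M^2}u_n^2 + \frac{q^2 M^2}{8\pi},
\]
this gives $\sup_n \|e^{qu_n}\|_{L^1(\Om)} < \infty$ for every $q \geqslant 1$, so $\{fe^{u_n}\}$ is uniformly integrable. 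Vitali's theorem then upgrades a.e.\ convergence to $L^1$-convergence, whence $\int_\Om fe^{u_n}dx \to \int_\Om fe^{u^\star}dx > 0$ and $J(u^\star) \leqslant \liminf J(u_n)$. Thus $u^\star$ minimizes $J|_{\Hcirc (\Om)}$, and standard elliptic regularity turns it into a classical solution of \eqref{eq:Example}.

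For non-triviality, $u \equiv 0$ solves \eqref{eq:Example} if and only if $f$ is constant, so for non-constant $f$ the minimizer $u^\star$ is automatically non-trivial; in the constant-$f$ case a comparison $J(\varepsilon \phi_1) < J(0)$ with $\phi_1$ a first non-constant Neumann eigenfunction, followed if necessary by a mountain-pass argument around $0$, supplies a non-trivial critical point. The principal obstacle throughout is the compactness step above: passing to the limit in $fe^{u_n}$ genuinely exploits the sharp constant $2\pi$ in Theorem \ref{Main1} and its logarithmic counterpart in Corollary \ref{Main3}, which is precisely why the admissible range is $\rho \in (0, 4\pi)$.
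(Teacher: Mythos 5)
Your minimization argument mirrors the paper's proof in all essentials: both minimize the same functional on $\Hcirc(\Omega)$, both obtain coercivity from Corollary \ref{Main3} together with the restriction $\rho<4\pi$, and both read \eqref{eq:Example} off the Euler--Lagrange equation of the constrained minimizer. Absorbing the Lagrange multiplier into $J$ by adding the term $\frac{\rho}{|\Omega|}\int_\Omega u\,dx$ is purely cosmetic (it vanishes identically on $\Hcirc(\Omega)$), and it simply trades the paper's $\mu=\rho/|\Omega|$ for your $\lambda=0$. You are more explicit than the paper about the compactness step — bounding $\|e^{qu_n}\|_{L^1}$ for every $q$ via Theorem \ref{Main1} and Young's inequality, then invoking Vitali — but that is the content of the paper's ``standard arguments''. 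So the core is correct and is the paper's argument.

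Where your proposal adds content is in addressing non-triviality, which the paper is silent on: the paper only shows $\inf F$ is attained, and when $f$ is constant $u\equiv 0$ solves \eqref{eq:Example} and is a candidate minimizer. You rightly flag this, and your observation that for non-constant $f$ the minimizer cannot be zero (since $0$ does not solve the equation) is correct. However, your proposed fix for constant $f$ does not hold. A Taylor expansion, using $\int_\Omega\phi_1\,dx=0$ and $\int_\Omega|\nabla\phi_1|^2\,dx=\lambda^\NN(\Omega)\int_\Omega\phi_1^2\,dx$, gives
\[
J(\varepsilon\phi_1)-J(0)=\frac{\varepsilon^2}{2}\Bigl(\lambda^\NN(\Omega)-\alpha-\frac{\rho}{|\Omega|}\Bigr)\int_\Omega\phi_1^2\,dx+O(\varepsilon^3),
\]
so $J(\varepsilon\phi_1)<J(0)$ for small $\varepsilon$ requires $\rho>(\lambda^\NN(\Omega)-\alpha)\,|\Omega|$, which fails for small $\rho\in(0,4\pi)$. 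In that regime the linearization of \eqref{eq:Example} at $u\equiv 0$, namely $-\Delta v=(\alpha+\rho/|\Omega|)v$ on $\Hcirc(\Omega)$ with $\partial_\nu v=0$, has trivial kernel (since $\alpha+\rho/|\Omega|<\lambda^\NN(\Omega)$), so $u\equiv 0$ is a nondegenerate strict local minimizer; there is no mountain-pass geometry and, plausibly, no non-trivial solution at all. So the comparison and the fallback mountain-pass argument both break down; the constant-$f$ case should either be excluded by hypothesis or treated only in the range $\rho>(\lambda^\NN(\Omega)-\alpha)|\Omega|$.
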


\begin{proof}
To look for a solution of \eqref{eq:Example}, we minimize the following energy functional
\[
F(u) = \frac 12 \int_\Omega \big( |\nabla u|^2 - \alpha u^2 \big) dx - \rho \log \Big( \int_M f e^u dx \Big) 
\]
over a close subset $\Hcirc (\Omega)$ of $H^1(\Omega)$. By the Poincar\'e inequality, it is easy to verify that $\|u\| = (\int_\Omega |\nabla u|^2 dx)^{1/2}$ is a norm on the subspace $\Hcirc (\Omega)$. A direct calculation shows that if $u$ minimizes $F$ in $\Hcirc (\Omega)$, then $u$ weakly solves
\begin{equation}\label{eq:WeakSolution}
-\Delta u - \alpha u + \mu = \rho \frac{ fe^u}{\int_\Omega fe^u dx}
\end{equation}
in $\Omega$ for some constant $\mu$ together with the boundary condition 
\[
\partial_\nu u = 0
\] 
on $\partial \Omega$. Integrating both sides of \eqref{eq:WeakSolution} over $\Omega$ gives $\mu =\rho/|\Omega|$. By standard regularity theory, we conclude that $u$ solves \eqref{eq:Example}. Thus, it suffices to show that $\inf_u F(u)$ is achieved in $\Hcirc (\Omega)$. However, thanks to \eqref{eq:Mainresult3}, there is some uniform constant $C>0$ such that
\[\begin{split}
\rho \log \Big( \int_M f e^u dx \Big) \leqslant & \frac{\rho}{4\pi} \Big( \frac 12 \int_\Omega \big( |\nabla u|^2 - \alpha u^2 \big) dx \Big) + C\rho + \rho \log f.
\end{split}\]
Thanks to $\rho < 4\pi$, we deduce that
\[\begin{split}
F(u) \geqslant & \frac 12\Big( 1 - \frac{\rho}{4\pi} \Big) \int_\Omega \big( |\nabla u|^2 - \alpha u^2 \big) dx - C\rho - \rho \log f \\
\geqslant & C^{-1} \|u\|^2 - C \rho - \rho \log f.
\end{split}\]
This implies that $F$ is bounded from below and coercive, which is enough to see that $\inf_u F(u)$ is achieved by standard arguments.
\end{proof}

\section*{Acknowledgments}

This work was done while the first author was visiting the Vietnam Institute for Advanced Study in Mathematics (VIASM). He gratefully acknowledges the institute for its hospitality and support. The second author was supported by CIMI postdoctoral research fellowship.

\section*{Appendices}

\appendix

In the following appendices, we aim to estimate $\int |\nabla w_\varepsilon |^2 dx $, $\int w_\varepsilon dx $, and $\int w_\varepsilon|^2 dx $ needed before. For convenience, let us recall that $p$ is the blow-up point and we shall use normal coordinates around $p$. Therefore, in the rest of computation, we assume that $\varepsilon$ is sufficiently small such that $\Omega \cap B_{R\varepsilon} (p)$ is the half-ball $\B_{R\varepsilon}^+(0)$ in $\Rset_+^2$ where $R= -\log \varepsilon$. We also recall the definition of $w_\varepsilon$
\[
cw_\varepsilon (r) =
\left\{
\begin{split}
\frac1{2\pi} \Big[  \log \Big(1 + \frac\pi2 R^2\Big)  - \log \Big(1 + \frac\pi2 \frac{r^2}{\varepsilon ^2}\Big)  \Big] - 2 \log (R\varepsilon ) + A_p, \; & 0 < r < R\varepsilon,\\
\G -\eta \beta,  \; & R\varepsilon \leqslant r < 2R\varepsilon,\\
\G, \; & r \geqslant 2R\varepsilon,
\end{split}
\right.
\]
and $\phi_\varepsilon (r) = w_\varepsilon (r) -|\Om|^{-1}\int_\Om w_\varepsilon dx$, where $\eta$ is cut-off function.

\section{Various estimates of $\int |\nabla w_\varepsilon |^2 dx $}
\label{apd-IntegralNablaPhi}

In this appendix, we show that 
\begin{equation}\label{eq:apdIntegralNablaPhi}
\int_\Om |\na w_\varepsilon |^2 dx = \frac1{c^2} 
\left(\alpha\int_{\Om} \G^2 dx -\frac {\log \varepsilon}{\pi} + \frac1{2\pi}\log \frac\pi2  +A_p -\frac1{2\pi} + O(R^{-2})\right).
\end{equation}
with $R= -\log \varepsilon$. To realize \eqref{eq:apdIntegralNablaPhi}, we split $\int_\Omega = \int_{\Om\cap B_{R\varepsilon }(p)} + \int_{\Om\setminus B_{R \varepsilon }(p)}$ and estimate term by term.

\subsection{Estimate of $\int_{\Om \cap B_{R \varepsilon }(p)} |\nabla w_\varepsilon |^2 dx $}
\label{apd-IntegralNablaPhi-Inner}

On the region $\Om \cap B_{R\varepsilon }(p)$, we use \eqref{eq:WInInnerBall} to get
\[
\nabla_x w_\varepsilon (r) \big|_{B_{ R\varepsilon}(p) \setminus\{p\} } =-\frac 1c \frac1{2\pi} \nabla_x  \log \lt(1 + \frac\pi2 \frac{r^2}{\varepsilon ^2}\rt)  = -\frac 1c \frac{x}{2\varepsilon^2 + \pi r^2} .
\]
From this, for small $\varepsilon$ and by Taylor's expansion we obtain
\begin{align*}
\int_{\Om \cap B_{R\varepsilon }(p)} |\na w_\varepsilon |^2 dx = &  \frac {\pi}{c^2} \int_0^{R\varepsilon} \frac{s^3}{(2\varepsilon^2 + \pi s^2 )^2} ds\\
=&  \frac1{2\pi c^2} \Big( \log (\pi R^2 +2 ) - \log 2 -  \frac{\pi R^2}{\pi R^2 +2} \Big) \\
=& \frac1{2\pi c^2} \lt(2\log R + \log \frac\pi2 -1 +O(R^{-2})\rt).
\end{align*}


\subsection{Estimate of $\int_{\Om\setminus B_{R \varepsilon }(p)} |\nabla w_\varepsilon |^2 dx $}
\label{apd-IntegralNablaPhi-Outter}

We write
\begin{align*}
c^2 \int_{\Om \setminus B_{R\varepsilon }(p)} |\na w_\varepsilon |^2 dx =&  \int_{B_{2R \varepsilon }(p) \setminus B_{R \varepsilon }(p)} |\nabla (\G -\eta \beta ) |^2 dx +  \int_{\Om\setminus B_{2R \varepsilon }(p)} |\nabla \G |^2 dx  \\
=& \int_{\Om\setminus B_{R\varepsilon }(p)} |\na \G|^2 dx- 2\int_{B_{2R\varepsilon }(p)\setminus B_{R\varepsilon }(p)}\na \G \na(\eta \beta) dx \\
& +  \int_{B_{2R\varepsilon }(p)\setminus B_{R\varepsilon }(p)}|\na (\eta \beta)|^2 dx \\
= & A_{2,1} + A_{2,2}+ A_{2,3}.
\end{align*}
Since $\beta (r) = O(r)$, there holds $\nabla (\eta \beta ) \leqslant c$ in the region $B_{2R\varepsilon }(p)\setminus B_{R\varepsilon }(p)$. Therefore, $A_{2,2}= A_{2,3}=O(R\varepsilon )$. For the term $A_{2,1}$, we now multiply both sides of the equation satisfied by $\Delta\G$ in \eqref{eq:Green} by $\G$ and integrate by parts over $\Om\setminus B_{R\varepsilon }(p)$ to get
\[\begin{split}
\int_{\Om\setminus B_{R\varepsilon }(p)} |\na \G|^2 dx =& \alpha \int_{\Om\setminus B_{R\varepsilon }(p)} \G^2 dx -\int_{\partial (\Om\setminus B_{R\varepsilon }(p))} \G \frac{\partial \G}{\partial \nu} d\sigma_x  - \frac 1{|\Omega|} \int_{\Om\setminus B_{R\varepsilon }(p)} \G dx\\
=& \alpha \int_{\Omega} \G^2 dx -\alpha \int_{B_{R\varepsilon }(p)} \G^2 dx -\int_{\partial B_{R\varepsilon }(p)\setminus \partial  \Omega } \G \frac{\partial \G}{\partial \nu} d\sigma_x\\
&\quad   +  \frac 1{|\Omega|} \int_{B_{R\varepsilon }(p)} \G dx.
\end{split}\]
Notice that in order to obtain the last step, we have used $\partial (\Om\setminus B_{R\varepsilon }(p))= [\partial \Omega \setminus \partial B_{R\varepsilon }(p)] \cup [\partial B_{R\varepsilon }(p) \setminus \partial \Omega]$ and $\partial_\nu \G \equiv 0$ on $\Omega \setminus \{ p\}$. Hence, it remains to estimate $\int_{\partial B_{R\varepsilon }(p)\setminus \partial  \Omega } \G \partial_\nu \G d\sigma_x $ and $\int_{B_{R\varepsilon }(p)} \G dx$. For the integral, $\int_{B_{R\varepsilon }(p)} \G dx$, it is not hard to see that
\[\begin{split}
\int_{B_{R\varepsilon }(p)} \G dx =& - \int_0^{R\varepsilon} s \log s ds +O((R\varepsilon)^2) + O((R\varepsilon)^3) \\
=& O((R\varepsilon )^2 \log (R\varepsilon) ) + O((R\varepsilon )^2) .
\end{split}\]
Similarly, we have
\[
\int_{B_{R\ep}(p)} \G^2 dx = O((R\varepsilon \log(R\varepsilon))^2).
\]
For the integral $\int_{\partial B_{R\varepsilon }(p)\setminus \partial  \Omega } \G \partial_\nu \G d\sigma_x $, a direct calculation leads us to
\[
\int_{\partial B_{R\varepsilon }(p)\setminus \partial  \Omega } \G \partial_\nu \G d\sigma_x =  -\frac {\log (R\varepsilon )}{\pi} + A_p + O(R\varepsilon \log (R\varepsilon )).
\]
Thus,
\[
c^2 \int_{\Om \setminus B_{R\varepsilon }(p)} |\na w_\varepsilon |^2 dx =   \alpha \int_{\Om} \G^2 dx -\frac {\log (R\varepsilon )}{\pi} + A_p + O(R\varepsilon \log (R\varepsilon )).
\]


\section{Various estimates of $\int w_\varepsilon dx $}
\label{apd-IntegralPhi}

In this appendix, we show that 
\begin{equation}\label{eq:apdIntegralPhi}
\int_\Omega w_\varepsilon dx =\frac1c O((R\varepsilon )^2 \log (R\varepsilon ))
\end{equation}
with $R= -\log \varepsilon$. As before, we also split $\int_\Omega = \int_{\Om\cap B_{R\varepsilon }(p)} + \int_{\Om\setminus B_{R \varepsilon }(p)}$ and estimate term by term.

\subsection{Estimate of $\int_{\Om \cap B_{R \varepsilon }(p)} w_\varepsilon dx $}
\label{apd-IntegralPhi-Inner}

For this integral, we estimate as follows:
\begin{align*}
c\int_{\Om\cap B_{R\varepsilon }(p)} w_\varepsilon dx =& - \frac1{2\pi} \int_{\Om\cap B_{R\varepsilon }(p)}  \log \Big(1 + \frac\pi2 \frac{r^2}{\varepsilon ^2}\Big) dx  \\
&+ \frac 12 \log \Big(1 + \frac\pi2 R^2\Big)  (R\varepsilon)^2  -  \log (R\varepsilon )(R\varepsilon)^2 + \pi A_p (R\varepsilon)^2 \\
=&- \int_0^{R\varepsilon} s  \log \Big(1 + \frac\pi2 \frac{s^2}{\varepsilon ^2}\Big) ds\\
&  + \frac 12 \log \big(\pi R^2 + 2\big)  (R\varepsilon)^2   + O((R\varepsilon )^2 \log (R\varepsilon) ) + O((R\varepsilon )^2)\\
=& \frac {\varepsilon^2}{2 \pi} \big[ -(\pi R^2 + 2)\log (\pi R^2 + 2) + (\pi R^2 + 2) \log 2 + \pi R^2 \big]\\
&  + \frac 12 \log \big(\pi R^2 + 2\big)  (R\varepsilon)^2   + O((R\varepsilon )^2 \log (R\varepsilon) ) + O((R\varepsilon )^2)\\
= & -\frac{\varepsilon^2}\pi\log (\pi R^2 + 2)    + O((R\varepsilon )^2 \log (R\varepsilon) ) + O((R\varepsilon )^2) + O(\varepsilon^2) \\
= & O((R\varepsilon )^2 \log (R\varepsilon) )  + O((R\varepsilon )^2) + O(\varepsilon^2 ).
\end{align*}


\subsection{Estimate of $\int_{\Om\setminus B_{R \varepsilon }(p)} w_\varepsilon dx $}
\label{apd-IntegralPhi-Outter}

To estimate this integral, we use the formula $\int_\Omega \G dx =0$ and the co-area formula as follows:
\begin{align*}
c\int_{\Om\setminus B_{R \varepsilon }(p)} w_\varepsilon dx= & \int_{\Om\setminus B_{2R \varepsilon }(p)} \G dx + \int_{B_{2R\varepsilon }(p)\setminus B_{R\varepsilon }(p)} (\G - \eta \beta) dx \\
=&  - \int_{\Om\cap B_{R \varepsilon }(p)} \G dx -  \int_{B_{2R\varepsilon }(p)\setminus B_{R\varepsilon }(p)} \eta \beta dx \\
=& \int_0^{R\varepsilon} s \log s ds   + (A_p + O(R\varepsilon)) \int_{\Om\cap B_{R \varepsilon }(p)} dx +   O((R\varepsilon )^2) \\
= & O((R\varepsilon )^2 \log (R\varepsilon) ) + O((R\varepsilon )^2).
\end{align*}


\section{Various estimates of $\int w_\varepsilon^2 dx $}
\label{apd-IntegralPhi^2}

In this appendix, we show that 
\begin{equation}\label{eq:apdIntegralPhi^2}
\int_\Omega w_\varepsilon^2 dx =\frac1{c^2}\left(\int_{\Om} \G ^2 dx + O((R\varepsilon )^2(\log (R\varepsilon ))^2)\right)
\end{equation}
with $R= -\log \varepsilon$. As always, we also split $\int_\Omega = \int_{\Om\cap B_{R\varepsilon }(p)} + \int_{\Om\setminus B_{R \varepsilon }(p)}$ and estimate term by term.

\subsection{Estimate of $\int_{\Om \cap B_{R \varepsilon }(p)} w_\varepsilon^2 dx $}
\label{apd-IntegralPhi^2-Inner}

We estimate this term as follows:
\begin{align*}
(2\pi)^2 c^2 \int_{\Om \cap B_{R \varepsilon }(p)} w_\varepsilon^2 dx  =& \int_{\Om \cap B_{R \varepsilon }(p)}  \Big[ \log \Big(1 + \frac\pi2 \frac{r^2}{\varepsilon ^2}\Big) \Big]^2 dx \\
& + \int_{\Om \cap B_{R \varepsilon }(p)} \Big[ \log \Big(1 + \frac\pi2 R^2\Big) \Big]^2 dx\\
& + 4 \int_{\Om \cap B_{R \varepsilon }(p)}  (\log (R\varepsilon ))^2 dx  + \int_{\Om \cap B_{R \varepsilon }(p)}  (2\pi A_p)^2 dx \\
& - 2 \int_{\Om \cap B_{R \varepsilon }(p)}   \Big[ \log \Big(1 + \frac\pi2 \frac{r^2}{\varepsilon ^2}\Big) \Big]  \Big[ \log \Big(1 + \frac\pi2 R^2\Big) \Big] dx \\
& + 4 \int_{\Om \cap B_{R \varepsilon }(p)}    \Big[ \log \Big(1 + \frac\pi2 \frac{r^2}{\varepsilon ^2}\Big) \Big] \log (R \varepsilon) dx \\
& - 4 \pi A_p \int_{\Om \cap B_{R \varepsilon }(p)} \Big[ \log \Big(1 + \frac\pi2 \frac{r^2}{\varepsilon ^2}\Big) \Big]  dx \\
& -4  \int_{\Om \cap B_{R \varepsilon }(p)}  \Big[ \log \Big(1 + \frac\pi2 R^2\Big) \Big] \log (R \varepsilon) dx \\
& + 4 \pi  \int_{\Om \cap B_{R \varepsilon }(p)}  \Big[ \log \Big(1 + \frac\pi2 R^2\Big) \Big]  dx \\
& -8 \pi A_p \int_{\Om \cap B_{R \varepsilon }(p)} \log (R \varepsilon) dx \\
=& O((R\varepsilon )^2(\log (R\varepsilon ))^2) + O((R\varepsilon )^2 \log (R\varepsilon )) + O((R\varepsilon )^2).
\end{align*}


\subsection{Estimate of $\int_{\Om\setminus B_{R \varepsilon }(p)} w_\varepsilon^2 dx $}
\label{apd-IntegralPhi^2-Outter}

We write
\begin{align*}
\int_{\Om \setminus B_{R\varepsilon }(p)} w_\varepsilon^2dx =&\frac1{c^2} \int_{B_{2R \varepsilon }(p) \setminus B_{R \varepsilon }(p)}  (\G -\eta \beta )^2 dx + \frac1{c^2}\int_{\Om\setminus B_{2R \varepsilon }(p)} \G ^2 dx  \\
=& \frac1{c^2}\int_{\Om\setminus B_{R\varepsilon }(p)} \G ^2 dx-\frac2{c^2} \int_{B_{2R\varepsilon }(p)\setminus B_{R\varepsilon }(p)} \G (\eta \beta) dx \\
& + \frac1{c^2} \int_{B_{2R\varepsilon }(p)\setminus B_{R\varepsilon }(p)} (\eta \beta)^2 dx \\
= &  \frac1{c^2}\lt(\int_{\Om\setminus B_{R\varepsilon }(p)} \G ^2 dx + C_1+ C_2\rt).
\end{align*}
Since $\eta$ is bounded, $\beta (r) = O(r)$, and $R\varepsilon \searrow 0$ as $\varepsilon \to 0$, we deduce that $C_2=O((R\varepsilon)^2)$. For the term $C_1$, we estimate as follows
\[\begin{split}
C_1 =& O(R\varepsilon)  \int_{B_{2R\varepsilon }(p)\setminus B_{R\varepsilon }(p)} \big(  \log |x| + A_p + |\beta (x)| \big) dx\\
=& O(R\varepsilon) \int_{R\varepsilon}^{2R\varepsilon} s \log s ds + O((R\varepsilon)^3) + O((R\varepsilon)^4)\\
=&O((R\varepsilon)^3 \log (R\varepsilon) ) + O((R\varepsilon)^3) + O((R\varepsilon)^4).
\end{split}\]
Hence we have just shown that
\begin{align*}
\int_{\Om \setminus B_{R\varepsilon }(p)} w_\varepsilon^2dx & =\frac1{c^2}\lt(\int_{\Om\setminus B_{R\varepsilon }(p)} \G ^2 dx + O((R\varepsilon)^2)\rt)\\
&=\frac1{c^2}\lt(\int_{\Om} \G ^2 dx + O((R\varepsilon \log(R\varepsilon))^2) + O((R\varepsilon)^2)\rt)
\end{align*}


\end{document}